\newtheorem{theorem}{Theorem}[section]
\newtheorem{corollary}{Corollary}[section]
\newtheorem{proposition}{Proposition}[section]
\theoremstyle{definition}
\newtheorem{definition}{Definition}[section]
\newtheorem{example}{Example}[section]
\newtheorem{remark}{Remark}[section]
\numberwithin{equation}{section}
\def\cl{\mbox{\rm cl}\,}
\def\bd{\mbox{\rm bd}\,}
\def\tbd{\mbox{\scriptsize {\rm bd}}}
\def\epi{\mbox{\rm epi}\,}
\def\tepi{\mbox{\scriptsize {\rm epi}}\,}
\def\tgph{\mbox{\scriptsize {\rm gph}}\,}
\def\gph{\mbox{\rm gph}\,}
\def\dom{\mbox{\rm dom}\,}
\def\cone{\mbox{\rm cone}\,}
\def \N{\mathbb{N}}
\def \R{\mathbb{R}}
\def \B{\mathbb{B}}
\DeclareMathOperator*{\Limsup}{Lim\, sup}
\begin{document}
\setcounter{page}{1}

\vspace*{1.0cm}
 \title[Coderivatives with respect to a set of the normal cone mappings]
 {Coderivatives with respect to a set of the normal cone mappings and their applications}
 \author[V.D. Thinh, X. Qin, J.C. Yao]{Vo Duc Thinh$^{1,2}$, Xiaolong Qin$^{1,3,*}$, Jen-Chih Yao$^4$}
\maketitle
\vspace*{-0.6cm}

\begin{center}
{\footnotesize {\it

 $^1$School of Mathematical Sciences, Zhejiang Normal University, Jinhua, 321004,  China\\
  $^{2}$Dong Thap University, Cao Lanh City, 81000, Dong Thap, Vietnam\\
 $^3$Center for Advanced Information Technology,  Kyung Hee University, Seoul, Korea\\
 $^4$Research Center for Interneural Computing,   China Medical University, Taichung 40402, Taiwan
}
}\end{center}

\vskip 4mm {\small\noindent {\bf Abstract.}
Establishing explicit formulas of coderivatives with respect to a set of the normal cone mapping to a polyhedron, the solution set of a variational inequalities system, is one of the main goals of this paper. By using our coderivative formulas, we provide a characteristic of the Aubin property with respect to a set of that normal cone mapping and thereby give necessary optimality conditions for simple bilevel optimization problems under weak qualification conditions.

\noindent {\bf Keywords.}
 Aubin property with respect to a set; Bilevel programming; Coderivative with respect to a set; Linear inequality system; Optimality condition.

\noindent {\bf 2020 Mathematics Subject Classification.} 49J53, 90C30, 90C31.}

\renewcommand{\thefootnote}{}
\footnotetext{ $^*$Corresponding author.
 \par
 E-mail address: qxlxajh@163.com (X. Qin).}

\section{ Introduction}

The main goal of this paper is to establish explicit formulas of coderivatives with respect to $\mathcal C$ of the normal cone mapping to $\Theta,$ where $\Theta$ and $\mathcal C\subset \R^n$ are convex polyhedrons, respectively, given by
\begin{equation*}
    \Theta:=\left\{x\in \R^n\mid \langle a_i, x\rangle\le b_i\; \forall i\in I_1 \right\}
\end{equation*}
and
\begin{equation*}
    \mathcal C:=\left\{ x\in \R^n\mid \langle a_i, x\rangle\le b_i\; \forall i\in I_2\right\},
\end{equation*}
with $I_1:=\{1,\ldots, \ell_1\}$, $I_2:=\left\{1,\ldots, \ell_2\right\}$, and $a_i\in \R^n$ and  $b_i\in \R$ for each $i\in I_1\cup I_2.$ By using these formulas, we provide a characteristic of the Aubin/Lipschitz-like property with respect to $\mathcal C$ of the normal cone mapping to $\Theta$, denoted by $\mathcal N:\mathbb R^n\to \mathbb R^n$ and defined by $\mathcal N(x):=N(x,\Theta)$ for any $x\in \R^n.$ According to our coderivative formulas, we also give simple and sharp optimality conditions for the bilevel optimization problem, given in the following form:
\begin{align}\label{SBLP}
    \min\; &f(x)\tag{SBLP}\\
    \text{ \rm such that } & x\in \mathcal S\cap \mathcal C\nonumber\\
    \text{ \rm where }& \mathcal S:={\rm arcmin}\, \{c^{\top}x\mid x\in \Theta\}.\nonumber
\end{align} Here $c\in \R^n,$ $f:\Theta\to \bar{\R}:=\R\cup\{\infty\}$.

The calculation of the coderivative for the normal cone maping to $\Theta$ in finite-dimensional space was first established by   Henrion et al. in \cite{HOS2009} and then was further studied in infinite-dimensional space in \cite{HMN2010}. In that paper, the authors applied the obtained coderivative formulas to derive efficient conditions for robust Lipschitzian stability of solution maps to parameterized variational inequalities via the coderivative characterization of the major Lipschitz-like/Aubin property for general set-valued mappings  with computing the exact bound of Lipschitzian moduli. Some generalized results of \cite{HMN2010} can be found in \cite{Nam10,Qui13}.

In this paper, inspired by \cite{HMN2010,HOS2009}, we first compute coderivatives with respect to $\mathcal C$ of $\mathcal N$ and then we provide an explicit characteristic of the Aubin property with respect to $\mathcal C$ of $\mathcal N$ via the new coderivative formulas. The Aubin property with respect (or relative) to a set of a multifunction was introduced in \cite{Roc98} and recently  continued to be studied in \cite{MLYY23, MWY23, TQ01-ZNU,TQ02-ZNU,YY22}. More specifically, the authors in \cite{MLYY23} established a Mordukhovich type criterion for Aubin property with respect to a set of multifunctions with the aid of projectional coderivatives. Using this approach, the authors in \cite{YY22} provided the sufficiently condition for the {\it relative Lipschitz-like property} (the Aubin property with respect to the domain of mapping) of the solution mapping of affine variational inequalities. When the domain of the solution mapping is a polyhedron, the necessity is also obtainable. In particular, the explicit criterion for the relative Lipschitz-like property of the solution mapping of a linear complementarity problem with respect to its domain is presented. However, it should be known that the relative Lispchitz-like property of the solution mapping of affine variational inequalities in that paper is very difficult to obtain at boundary points of the domain of that solution mapping if the interior of that domain is nonempty. The reason is that in this case $\mathcal N(x)=0$ for any $x$ belongs to the interior of the domain of the solution mapping, which is totally different from that one at boundary points. Moreover, in that paper, the formulas of the projectional coderivative of the solution mapping to variational inequalities systems were provided in implicit forms, and  further applications of the relative Lipschitz-like property, for example in the study of optimality conditions, were not presented in that paper. Recently, the authors in \cite{MWY23} presented coderivatives with respect to a set for multifunctions in Banach spaces and used them as tools to obtain a new Mordukhovich criterion for the Aubin property/Lipschitz-like with respect to a set of multifunctions.
 Almost at the same time as \cite{MWY23},  the authors in \cite{TQ01-ZNU} also stated a new Mordukhovich criterion for the Aubin property with respect to a set of multifunctions due to the limiting coderivative with respect to that set by another approach.  Formulas for calculating and some applications in studying optimality conditions of the limiting coderivative  presented in \cite{TQ02-ZNU}. Moreover, with that approach, the applications in studying optimality conditions of the Aubin property with respect to a set of multifunctions were introduced in \cite{TQ02-ZNU}.  It is important to note that although there is no reference to the paper \cite{MWY23}, there are some similarities the approaches in \cite{TQ01-ZNU, TQ02-ZNU} and \cite{MWY23}. Besides, in finite-dimensional spaces, the limiting coderivative with respect to a convex polyhedron of multifunctions in \cite{TQ01-ZNU} coincides that one in \cite{MWY23}. However, in general, the limiting coderivative with respect to a set in \cite{TQ01-ZNU, TQ02-ZNU} and \cite{MWY23} are different.

Another main goal of this paper is to provide optimality conditions for \ref{SBLP} problem due to the Aubin property and the limiting coderivative with respect to $\mathcal C$ of $\mathcal N$. \ref{SBLP} problem is a generalization of linear bilevel programming and is called a simple bilevel optimization problem with the linear lower-level problem. In the case that  the objective function of the upper-level problem is a convex function,  \ref{SBLP} problem is reduced to a particular case of the simple convex bilevel programming problem (SCBP) studied in \cite{DDD2010, DDDP2020, DZ2020}. Optimality conditions for the SCBP problem were studied in \cite{DDD2010} under several constraint qualifications and further discussed  in \cite{DDDP2020} by expressing the SCBP problem as a simple  mathematical programming problem under equilibrium constraints (SMPEC). However, those optimality conditions may not applied for the SBLP problem because of the loss of convexity of the objective function. By writing the SBLP problem in the following form:
\begin{align}\label{SBLP1}
    \min\; &f(x)\\
    \text{ \rm such that } & 0\in -c+ N(x,\Theta) \text{ \rm and } x\in \mathcal C,\nonumber
\end{align}
and by using the obtained coderivative formulas, we   provide new optimality conditions for the SBLP problem under weak qualification conditions. The highlights of the paper are as follows:
\begin{itemize}
\item Compute the normal cones with respect to $\mathcal C\times\R^n$ of  set $\gph \mathcal N$.
\item Establish formulas of limiting coderivative with respect to $\mathcal C$ of $\mathcal N$ from the formulas of the limiting normal cone with respect to $\mathcal C\times\R^n$ of $\gph \mathcal N.$
\item Provide explicit characteristics of the Aubin property with respect to $\mathcal C$ of $\mathcal N.$
\item Apply the formulas of limiting coderivative with respect to $\mathcal C$ of $\mathcal N$ to the  optimality conditions for  problems \eqref{SBLP1} and  \ref{SBLP}.
\end{itemize}

\section{Preliminaries}

From now on, we always assume that all of the  spaces considered in this paper are finite dimension spaces with   norm $\Vert\cdot\Vert$ and scalar product $\langle \cdot,\cdot\rangle.$ Let $\mathbb R^n, \mathbb R^m, \R^s$ be finite dimmension spaces with norms $\Vert \cdot\Vert_{n}$, $\Vert\cdot\Vert_m$, and $\Vert\cdot\Vert_s$, respectively. The Cartesian product $\mathbb R^n\times \mathbb R^m$ is equipped a norm defined by $\Vert (x,y)\Vert:=\Vert \cdot\Vert_n +\Vert\cdot\Vert_m.$ We sometimes use $\Vert\cdot\Vert$ for some norm if there is no the confusion. The closed unit ball in $\R^s$ is signified by $\B$ while $\B(x,r)$ is borrowed  to denote the closed ball centered at $x$ with radius $r>0.$
Define
$$
\R^s_+:=\left\{x=(x_1,\ldots, x_s)\in\R^s\mid x_i\ge 0\; \forall i=1,\ldots, s\right\}.
$$
Given a real numbers sequence $(t_k)$, we write $t_k\to 0^+$ if $t_k\to 0$ and $t_k\ge 0$ for all $k,$ while $t_k\downarrow 0$ means that $t_k\to 0^+$ and $t_k>0$ for all $k\in \N.$

Let $\Omega$ be a nonempty set in $\mathbb R^s$ and $\bar x\in\Omega$. Denote the {\it interior}, the {\it closure}, and the {\it boundary} of $\Omega$  by ${\rm int}\, \Omega,$ $\cl \Omega$, and $\bd \Omega$, respectively.  Denote $\mathcal R(\bar x,\Omega):=\left\{x^*\mid \exists p>0: \bar x+px^*\in \Omega\right\}=\R_+(\Omega-\bar x).$
We use the notion $x_k\xrightarrow{\Omega}\bar x$ to say that $x_k\to \bar x$ and $x_k\in \Omega$ while $x_k\xrightarrow[x_k\ne \bar x]{\Omega}\bar x$ means that $x_k\xrightarrow{\Omega}\bar x$ and $x_k\ne \bar x$ for all $k\in \N$. Let $\R^s$ be equipped the Euclidean norm. The {\it distance function} to $\Omega$, $d_{\Omega}:\mathbb R^s\to \mathbb R,$ is defined by $d_{\Omega}(x):=\inf_{u\in \Omega}\Vert u-x\Vert\; \text{ \rm for all } x\in \mathbb R^s.$ An element $u$ in $\Omega$ satisfying $d_{\Omega}(x)=\Vert u-x\Vert$ is called a (Euclidean) {\it projector} (or {\it closest point}) of $x$ onto $\Omega.$ The multifunction $\Pi(\cdot, \Omega): \mathbb R^s\rightrightarrows\mathbb R^s, x\mapsto \Pi(x,\Omega):=\left\{u\in \mathbb R^s\mid \Vert u-x \Vert=d_{\Omega}(x)\right\}$ is called the {\it set-valued mapping projection} onto $\Omega$, and the set $\Pi(x,\Omega)$ is called the {\it Euclidean projector set} of $x$ onto $\Omega.$ Note that the set $\Pi(x,\Omega)$ can be empty, however, if $\Omega$ is closed, then $\Pi(x,\Omega)\ne \emptyset$ for any $x\in \mathbb R^s.$ Given $u\in \Omega,$ we define $\Pi^{-1}(u,\Omega):=\left\{x\in \mathbb R^s\mid u\in \Pi(x,\Omega) \right\}.$
The {\it proximal and Fréchet normal cones} to $\Omega$ at $\bar x\in\Omega$ are respectively  given (see \cite[Definition~1.1 and page 240]{Mor06}) by
$$
N^p(\bar x,\Omega):=\cone[\Pi^{-1}(\bar x,\Omega)-\bar x]
$$
and
$$
\hat N(\bar x,\Omega):=\left\{x^*\in \mathbb R^n\mid \limsup_{x\xrightarrow{\Omega}\bar x}\dfrac{\langle x^*, x-\bar x\rangle}{\Vert x-\bar x\Vert}\le 0\right\}.
$$
The {\it limiting normal cone} to $\Omega$ at $\bar x$ is defined by
$$
N(\bar x,\Omega):=\Limsup_{x\to\bar x}\Big(\cone[x-\Pi(x,\Omega)]\Big).
$$
The {\it tangent/contingent cone} to $\Omega$ at $\bar x$ is given by
$$
T(\bar x,\Omega):=\left\{v\in\R^n\mid \exists t_k\downarrow 0, v_k\rightarrow\bar v \text{ \rm such that } \bar x+t_kv_k\in \Omega, \forall k\in\N \right\}.
$$

Let $f:\R^n\to\bar\R:=\R\cup\{\infty\}$ be an extended real-valued mapping. We respectively denote the domain and the epi-graph of $f$ by
$$
\dom f:=\left\{x\in\R^n\mid f(x)<\infty\right\} \text{ \rm and } \epi f:=\left\{(x,\alpha)\mid \alpha\ge f(x)\right\}.
$$
Given $\mathcal C\subset \R^n,$ we define $f_{\mathcal C}:\R^n\to\bar\R$ by $f_{\mathcal C}(x):=\begin{cases}f(x)&\text{ \rm if } x\in \mathcal C,\\
\infty &\text{ \rm otherwise.}\end{cases}$\\
The mapping $f$ is called {\it lower semi-continuous} with respect to (on) $\mathcal C\subset \dom f$ if $\epi f_{\mathcal C}$ is a closed set. The set of all of the mappings which are lower semi-continuous with respect to $\mathcal C$ is denoted by $\mathcal F(\mathcal C).$ If $f\in\mathcal F(\dom f)$, then we say that $f$ is lower semi-continuous.

Consider the set-valued mapping $F:\mathbb R^n\rightrightarrows\mathbb R^m.$ We use
$$\dom F:=\left\{x\in \R^n\mid F(x)\ne \emptyset\right\}\; \text{ \rm and } \gph F:=\left\{(x,y)\in\R^n\times\R^m\mid y\in F(x)\right\}$$ to stand for  the {\it domain} and {\it graph} of $F.$ The {\it sequential Painlev\'{e}-Kuratowski upper/outer limit} of $F$ at $\bar x\in \dom F$ is given by
$$
\Limsup_{x\to\bar x}F(x):=\left\{y\in \R^m\mid \exists x_k\to \bar x, y_k\to y \text{ \rm with } y_k\in F(x_k) \; \text{ \rm for all } k\in \N\right\}.
$$
Given $\mathcal C\subset \R^n,$ we define the set-valued mapping $F_{\mathcal C}:\R^n\rightrightarrows\R^m$ as follows:
$$F_{\mathcal C}(x):=\begin{cases}
F(x)& \text{ \rm if } x\in \mathcal C,\\
\emptyset & \text{ \rm otherwise.}
\end{cases}$$
Thus
$$
\dom F_{\mathcal C}=\dom F\cap\mathcal C \; \text{ \rm and } \;  \gph F_{\mathcal C} = \gph F\cap(\mathcal C\times\R^m).
$$
The {\it epigraphical/profile mapping} of $F$, denoted by $\mathcal E^F:\R^n\rightrightarrows\R^m,$ is defined by
$$
\mathcal E^F(x):=F(x)+\R^m_+\; \text{ \rm for all }x\in \R^n.
$$
If $F:\R^n\rightrightarrows\R$ is a singleton mapping, i.e., $F(x)=\{f(x)\}$ for all $x\in \R^n$ then $\gph\mathcal E^F=\epi f:=\left\{(x,\alpha)\mid \alpha\ge f(x)\right\}.$

The Aubin property with respect to a set (also known as locally Lipschitz-like property with respect to a set in \cite{MLYY23,YY22}) of $F$ was introduced in \cite[Definition~9.36]{Roc98} as follows.

\begin{definition}[\cite{Roc98}, Definition~9.36]
{\rm Let $\mathcal C$ be a nonempty and  closed subset of $\mathbb R^n$, and let $F:\mathbb R^n\rightrightarrows \R^m$. Then $F$ is said  to have the {\it Aubin property with respect to} $\mathcal C$ around $(\bar x,\bar y)\in \gph F$ if $\gph F$ is locally closed around $(\bar x,\bar y)$ and there exist $\kappa\ge 0$ and neighborhoods $V$ of $\bar y$ and $U$ of $\bar x$ such that
\begin{equation}\label{Aubin-pro}
    F(u)\cap V\subset F(x) +\kappa\Vert u-x\Vert \mathbb B\quad  \forall x,u\in \mathcal C\cap U.
\end{equation}
In the case that \eqref{Aubin-pro} holds for $\mathcal C=\dom F$, we say that $F$ has the {\it relative Aubin property} around $(\bar x,\bar y).$
}
\end{definition}

We now introduce the concept of local Lipschitz continuity with respect to set for single-valued functions as follows.

\begin{definition}[\cite{Roc98}, Definition~9.1~(b)] \label{lips-vec}
{\rm Let $\mathcal C$ be a nonempty and closed subset of $\mathbb R^n.$ Let $f:\mathbb R^n\to\bar\R$ and $\bar x\in \dom f.$ Then $f$ is said  to be {\it locally Lipschitz continuous with respect to} $\mathcal C$ around $\bar x$ if
\begin{equation}\label{lips-con}
{\rm lip}_{\mathcal C}f(\bar x):=\limsup_{x,u\xrightarrow[x\ne u]{\mathcal C}\bar x}\dfrac{\vert f(x)-f(u)\vert}{\Vert x-u\Vert}< \infty,
\end{equation}
where ${\rm lip}_{\mathcal C}f(\bar x)$ is called the {\it exact Lipschitzian constant with respect to}  $\mathcal C$ of $f$ at $\bar x.$ If \eqref{lips-con} holds with $\mathcal C=\dom f$, then $f$ is said  to be {\it relatively locally Lipschitz continuous} around $\bar x.$
}
\end{definition}

\begin{remark} {\rm It is easy to see that the scalar indicator mapping of a nonempty set $\Omega$ is relatively local Lipschitz countinuous around any $x\in \Omega$, but it is not local Lipschitz countinuous around boundary points of $\Omega.$ Similarly, the set-valued indicator mapping of $\Omega$ satisfies the relative Aubin property around any $(x,0)\in \gph\Delta_{\Omega}$, but it does not satisfies Aubin property around boundary points $(\bar x,0)$ for any $\bar x\in\bd\Omega.$}
\end{remark}

On the basic of the use of the limiting normal cone to sets, the limiting corderivative and subdifferential of multifunctions were introduced in \cite{Mor06,BM07}.
\begin{definition} {\rm Let $F:\R^n \rightrightarrows\R^m$ and $(\bar x,\bar y)\in \gph F.$

{\rm (i)} \cite[Definition~3.32]{Mor06} The {\it limiting coderivative} (also known as the {\it normal coderivative}) of $F$ at $(\bar x,\bar y)$ is the multifunction $D^*F(\bar x,\bar y): \R^m\rightrightarrows\R^n$, which is defined by
$$
D^*F(\bar x,\bar y)(y^*):=\left\{x^*\in\R^n\mid (x^*,-y^*)\in N((\bar x,\bar y),\gph F)\right\} \; \text{ \rm for all } y^*\in \R^m.
$$

{\rm (ii)} \cite[Definition~2.1]{BM07} The {\it limiting subdifferential}  of $F$ at $(\bar x,\bar y)$ is given by
$$
\partial F(\bar x,\bar y):=\left\{x^*\in\R^n\mid (x^*,-y^*)\in N((\bar x,\bar y),\gph \mathcal E^F), y^*\in \R^m_+, \Vert y^*\Vert=1\right\}.
$$

If $F$ is the singleton mapping $f$, then we use $D^*f(\bar x)$ and $\partial f(\bar x)$ instead of $D^*f(\bar x,f(\bar x))$ and $\partial f(\bar x,f(\bar x))$, respectively.
}
\end{definition}

 \begin{definition}\label{normal-cone-wrt}
 {\rm Let $\Omega, \mathcal C\subset \mathbb R^s$ be nonempty sets, and $\bar x\in \Omega\cap \mathcal C.$ Suppose that $\mathcal C$ is convex and locally closed around $\bar x.$ Then

{\rm (i)} The {\it Fr\'{e}chet normal cone} to $\Omega$ at $\bar x$ {\it with respect to} $\mathcal C$ is defined by
\begin{align}\label{def-fre-normal-cone}
\hat N_{\mathcal C}(\bar x,\Omega):=\left\{x^*\in \R^s\mid \exists p>0: \bar x+px^*\in \mathcal C, \limsup\limits_{x\xrightarrow{\Omega\cap\mathcal C}\bar x}\dfrac{\langle x^*, x-\bar x\rangle}{\Vert x-\bar x\Vert}\le 0\right\}.
\end{align}

{\rm (ii)} The {\it limiting normal cone} to $\Omega$ at $\bar x$ {\it with respect to} $\mathcal C$ is defined by
 \begin{equation}\label{limiting-normal-cone}
 N_{\mathcal C}(\bar x,\Omega):=\Limsup_{x\xrightarrow{\Omega\cap\mathcal C}\bar x}\hat N_{\mathcal C}(x,\Omega).\end{equation}
 }
\end{definition}

\begin{remark}
    {\rm (i) It is easy to see from \eqref{def-fre-normal-cone} that $\hat N_{\mathcal C}(\bar x,\Omega)$ does not depend on equivalent norms, which is due to the property of $\hat N(\bar x,\Omega\cap\mathcal C).$}

    {\rm (ii) In Definition~\ref{normal-cone-wrt}, if $\mathcal C$ is a neighborhood of $\bar x$, then $\hat N_{\mathcal C}(\bar x,\Omega)$ and  $N_{\mathcal C}(\bar x,\Omega)$ reduces to $\hat N(\bar x,\Omega)$ and $N(\bar x,\Omega)$, respectively.}
\end{remark}

    We next recall several foundational properties of the Fr\'{e}chet and limiting normal cones with respect to a set presented in \cite{TQ02-ZNU} as follows.
    \begin{proposition}[\cite{TQ02-ZNU}]\label{pro1}
    Let $\Omega$ be a nonempty set, and let $\mathcal C$ be a nonempty, convex, and closed set in $\mathbb R^s.$ Then, for each $\bar x\in \Omega\cap \mathcal C,$ the following assertions hold:

    {\rm (i)} $\hat N_{\mathcal C}(\bar x,\Omega)=\hat N(\bar x,\Omega\cap\mathcal C)\cap \mathcal R(\bar x,\mathcal C).$

    In addition, if $\Omega$ is convex, then
    \begin{align}
\hat N_{\mathcal C}(\bar x,\Omega)=\Big\{x^*\in \R^s\mid  \exists p>0: \bar x+px^*\in \mathcal C, \langle x^*, x-\bar x\rangle\le 0 \; \forall x\in \Omega\cap\mathcal C\Big\}.\label{formula-prox-cone-2}
\end{align}

    {\rm (ii)} $\hat N_{\mathcal C}(\bar x,\Omega)$ is a convex cone.

    {\rm (iii)} $N_{\mathcal C}(\bar x,\Omega)$ is a closed cone.

    {\rm (iv)} If $\R^s$ is an Euclidean space and $\Omega$ is locally closed around $\bar x$, then
    $$
    N_{\mathcal C}(\bar x, \Omega)= \Limsup\limits_{x\xrightarrow{\mathcal C}\bar x}\left(\cone[x-\Pi(x,\Omega\cap\mathcal C)]\right).
    $$
    \end{proposition}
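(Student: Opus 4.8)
The plan is to treat the four assertions separately; parts~(i)--(iii) reduce to unwinding the definitions, and the substance is in~(iv). For~(i), I would read off~\eqref{def-fre-normal-cone} directly: the requirement ``$\exists p>0:\bar x+px^*\in\mathcal C$'' is precisely $x^*\in\mathcal R(\bar x,\mathcal C)$, while the condition $\limsup_{x\xrightarrow{\Omega\cap\mathcal C}\bar x}\langle x^*,x-\bar x\rangle/\Vert x-\bar x\Vert\le0$ is precisely $x^*\in\hat N(\bar x,\Omega\cap\mathcal C)$, so the first identity is immediate; if $\Omega$ is convex then $\Omega\cap\mathcal C$ is convex and $\hat N(\bar x,\Omega\cap\mathcal C)$ equals the classical normal cone $\{x^*\mid\langle x^*,x-\bar x\rangle\le0\ \forall x\in\Omega\cap\mathcal C\}$, and substituting this yields~\eqref{formula-prox-cone-2}. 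For~(ii), $\hat N_{\mathcal C}(\bar x,\Omega)$ is a cone because both defining conditions survive replacing $x^*$ by $\lambda x^*$ with $\lambda>0$ (rescale $p$ to $p/\lambda$) and $0$ belongs to it; convexity follows since the $\limsup$ condition is subadditive in $x^*$ and, using that $\mathcal C$ is convex with $\bar x\in\mathcal C$, the segments $[\bar x,\bar x+p_ix_i^*]$ lie in $\mathcal C$, so for $0<p\le\min\{p_1,p_2\}$ convexity of $\mathcal C$ gives $\bar x+p(tx_1^*+(1-t)x_2^*)\in\mathcal C$. Part~(iii) is then immediate: $N_{\mathcal C}(\bar x,\Omega)$ is a Painlev\'e--Kuratowski outer limit, hence closed, and the outer limit of a family of cones is a cone (rescale a convergent sequence by a fixed $\lambda>0$, and take the constant sequence $x\equiv\bar x$ to see $0$ belongs).

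For~(iv) the plan is to prove both inclusions, noting first that $\Omega\cap\mathcal C$ is locally closed around $\bar x$ (since $\Omega$ is and $\mathcal C$ is closed), so $\Pi(x,\Omega\cap\mathcal C)\ne\emptyset$ for $x$ near $\bar x$. For ``$\supseteq$'', take $v=\lim_k\lambda_k(x_k-u_k)$ with $x_k\xrightarrow{\mathcal C}\bar x$, $\lambda_k\ge0$, $u_k\in\Pi(x_k,\Omega\cap\mathcal C)$. Then $\Vert u_k-x_k\Vert=d_{\Omega\cap\mathcal C}(x_k)\le\Vert x_k-\bar x\Vert\to0$, so $u_k\to\bar x$ with $u_k\in\Omega\cap\mathcal C$; moreover $x_k-u_k\in N^p(u_k,\Omega\cap\mathcal C)\subseteq\hat N(u_k,\Omega\cap\mathcal C)$ (immediate from the two definitions), and since $x_k,u_k\in\mathcal C$ one has $\lambda_k(x_k-u_k)\in\mathcal R(u_k,\mathcal C)$; hence by~(i) $\lambda_k(x_k-u_k)\in\hat N_{\mathcal C}(u_k,\Omega)$, and passing to the limit gives $v\in N_{\mathcal C}(\bar x,\Omega)$.

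For the reverse inclusion, take $v=\lim_k v_k$ with $x_k\xrightarrow{\Omega\cap\mathcal C}\bar x$ and, by~(i), $v_k\in\hat N(x_k,\Omega\cap\mathcal C)\cap\mathcal R(x_k,\mathcal C)$. By a diagonal argument it suffices, for each fixed $k$, to approximate $v_k$ by elements of $\cone[x-\Pi(x,\Omega\cap\mathcal C)]$ with $x\in\mathcal C$ close to $\bar x$. Since $v_k\in\mathcal R(x_k,\mathcal C)$ and $\mathcal C$ is convex, $x_t:=x_k+tv_k\in\mathcal C$ for all small $t>0$; pick $u_t\in\Pi(x_t,\Omega\cap\mathcal C)$. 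The crucial estimate: because $x_k\in\Omega\cap\mathcal C$ we have $\Vert u_t-x_t\Vert\le\Vert x_k-x_t\Vert=t\Vert v_k\Vert$, and expanding $\Vert u_t-x_t\Vert^2=\Vert(u_t-x_k)-tv_k\Vert^2$ gives $\Vert u_t-x_k\Vert^2\le2t\langle v_k,u_t-x_k\rangle$; since $u_t\to x_k$ along $\Omega\cap\mathcal C$ and $v_k\in\hat N(x_k,\Omega\cap\mathcal C)$, the quotient $\langle v_k,u_t-x_k\rangle/\Vert u_t-x_k\Vert$ is $\ge0$ yet has nonpositive $\limsup$ as $t\downarrow0$, hence tends to $0$, forcing $\Vert u_t-x_k\Vert=o(t)$. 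Consequently $(x_t-u_t)/t=v_k+(x_k-u_t)/t\to v_k$ as $t\downarrow0$, and this vector lies in $\cone[x_t-\Pi(x_t,\Omega\cap\mathcal C)]$ with $x_t\in\mathcal C$, $x_t\to x_k$; diagonalizing over $k$ finishes the proof.

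I expect this last estimate to be the only real obstacle: a Fr\'echet normal $v_k$ need not be a proximal normal, so $x_k$ itself need not be a closest point of $x_k+tv_k$ in $\Omega\cap\mathcal C$, and one cannot simply invoke the proximal-normal description of $\Pi$. The superlinear bound $\Vert u_t-x_k\Vert=o(t)$ is what repairs this, and it must be squeezed out of the defining $\limsup\le0$ property of $\hat N$ together with the projection inequality; everything else, and all of parts~(i)--(iii), is routine bookkeeping.
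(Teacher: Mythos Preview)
The paper does not actually prove this proposition: it is stated with a citation to \cite{TQ02-ZNU} and no proof appears in the present paper, so there is no in-paper argument to compare against. That said, your proof proposal is correct and self-contained.

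Parts (i)--(iii) are, as you say, routine unwindings of the definitions; nothing is missing there. For part~(iv), both inclusions go through as written. In the ``$\supseteq$'' direction the only point worth making explicit is that $u_k\in\Omega\cap\mathcal C\subseteq\mathcal C$, so the choice $p=1/\lambda_k$ (when $\lambda_k>0$) genuinely witnesses $\lambda_k(x_k-u_k)\in\mathcal R(u_k,\mathcal C)$. In the ``$\subseteq$'' direction your key estimate is sound: from $\Vert u_t-x_t\Vert^2\le t^2\Vert v_k\Vert^2$ one gets $\Vert u_t-x_k\Vert^2\le 2t\langle v_k,u_t-x_k\rangle$, and when $u_t\ne x_k$ this forces the quotient $\langle v_k,u_t-x_k\rangle/\Vert u_t-x_k\Vert$ to be nonnegative while its $\limsup$ is nonpositive by the Fr\'echet condition, so it tends to $0$ and $\Vert u_t-x_k\Vert=o(t)$ follows; the case $u_t=x_k$ gives $(x_t-u_t)/t=v_k$ exactly. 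The diagonalization over $k$ is standard. Your closing remark correctly identifies the one nontrivial step: a Fr\'echet normal need not be proximal, and the $o(t)$ bound is precisely what bridges that gap.
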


We now recall the concept and some basic properties of the limiting coderivative and subdifferential with respect to a set.

\begin{definition}[\cite{TQ02-ZNU}]\label{coderivative-wrt}
 {\rm   Give a nonempty closed set $\mathcal C\subset \mathbb R^n$ and $\bar x\in \mathcal C$. Let $F:\mathbb R^n\rightrightarrows \R^m$ have a locally closed graph and $(\bar x,\bar y)\in \gph F.$ The {\it limiting coderivative with respect to} $\mathcal C$ of $F$ at $(\bar x,\bar y)$ is a multifunction $D_{\mathcal C}^*F(\bar x,\bar y):\mathbb R^m\rightrightarrows \mathbb R^n$ defined by
    $$D^*_{\mathcal C}F(\bar x,\bar y)(y^*) = \left\{ x^*\in\mathbb R^n\mid (x^*,-y^*)\in N_{\mathcal C\times \mathbb R^m}((\bar x,\bar y),\gph F_{\mathcal C})\right\}\; \forall y^*\in \mathbb R^m.$$
    In the case of $\mathcal C=\dom F,$ we use $\bar D^*F(\bar x,\bar y)$   instead of $D^*_{\mathcal C}F(\bar x,\bar y)$ and call the {\it relative limiting coderivative} of $F$ at $(\bar x,\bar y).$  }
    \end{definition}

    \begin{remark}
     {\rm (i) In Definition~\ref{coderivative-wrt}, if $\mathcal C$ is a neighborhood of $\bar x$, then  $D^*_{\mathcal C}F(\bar x,\bar y)$ reduces to $D^*F(\bar x,\bar y)$.}

     {\rm (ii)} In the case where $\mathcal C$ is a convex polyhedron, the limiting coderivative with respect to $\mathcal C$ in Definition~\ref{coderivative-wrt} coincides with that one in the normal contingent coderivative with respect to a set in \cite{MWY23}.
\end{remark}

Through the limiting coderivative with respect to a set, the necessary and sufficient conditions for the Aubin property with respect to a set of multifunctions were stated in \cite{TQ01-ZNU, TQ02-ZNU} as follows.

\begin{theorem}[A version with respect to a set of the Mordukhovich criterion] \label{thm1}
Let $\mathcal C$ be a closed and convex subset of $\mathbb R^n$. Let $F:\mathbb R^n\rightrightarrows\mathbb R^m$, $\bar x\in \mathcal C$, and $\bar y\in F(\bar x).$ Assume that $F$ has a locally closed graph around $(\bar x,\bar y)$. Then $F$ has the Aubin property with respect to $\mathcal C$ around $(\bar x,\bar y)$ if and only if
\begin{equation}\label{thm1-eqa}
D^*_{\mathcal C}F(\bar x,\bar y)(0)=\{0\}.
\end{equation}
\end{theorem}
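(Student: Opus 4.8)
The plan is to prove the two implications by different means: the necessity (Aubin property with respect to $\mathcal C$ $\Rightarrow$ \eqref{thm1-eqa}) by a direct estimate with Fréchet normals, and the sufficiency by reducing to the classical finite-dimensional Mordukhovich criterion via the metric projection onto $\mathcal C$. For \emph{necessity}, assume $F$ has the Aubin property with respect to $\mathcal C$ around $(\bar x,\bar y)$ with modulus $\kappa$ and neighbourhoods $U\ni\bar x$, $V\ni\bar y$, and pick $x^*\in D^*_{\mathcal C}F(\bar x,\bar y)(0)$, i.e. $(x^*,0)\in N_{\mathcal C\times\R^m}((\bar x,\bar y),\gph F_{\mathcal C})$. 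By \eqref{limiting-normal-cone} and Proposition~\ref{pro1}(i) (note $\gph F_{\mathcal C}\cap(\mathcal C\times\R^m)=\gph F_{\mathcal C}$, so $\hat N_{\mathcal C\times\R^m}(\cdot,\gph F_{\mathcal C})=\hat N(\cdot,\gph F_{\mathcal C})\cap(\mathcal R(\cdot,\mathcal C)\times\R^m)$) there are $(x_k,y_k)\xrightarrow{\gph F_{\mathcal C}}(\bar x,\bar y)$, scalars $p_k>0$ with $x_k+p_kx_k^*\in\mathcal C$, and $(x_k^*,-y_k^*)\to(x^*,0)$ with $(x_k^*,-y_k^*)\in\hat N((x_k,y_k),\gph F_{\mathcal C})$. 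Fix $k$ so large that $x_k\in U$, $y_k\in V$. For small $t>0$ with $t\le p_k$, convexity of $\mathcal C$ gives $x_k+tx_k^*=(1-\tfrac t{p_k})x_k+\tfrac t{p_k}(x_k+p_kx_k^*)\in\mathcal C\cap U$, and applying \eqref{Aubin-pro} with $(u,x)$ replaced by $(x_k,x_k+tx_k^*)$ (using $y_k\in F(x_k)\cap V$) furnishes $v_t\in F(x_k+tx_k^*)$ with $\|v_t-y_k\|\le\kappa t\|x_k^*\|$. Then $(x_k+tx_k^*,v_t)\in\gph F_{\mathcal C}$ tends to $(x_k,y_k)$ as $t\dn0$, so the defining inequality of $\hat N((x_k,y_k),\gph F_{\mathcal C})$ yields
\[
t\|x_k^*\|^2-\langle y_k^*,v_t-y_k\rangle\ \le\ o(t)\qquad(t\dn0),
\]
hence $t\|x_k^*\|^2\le\kappa t\|x_k^*\|\,\|y_k^*\|+o(t)$; dividing by $t\|x_k^*\|$ and letting $t\dn0$ gives $\|x_k^*\|\le\kappa\|y_k^*\|$, and then $k\to\infty$ forces $\|x^*\|\le\kappa\cdot0=0$. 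This proves \eqref{thm1-eqa}; convexity of $\mathcal C$ is used decisively to keep the test points $x_k+tx_k^*$ in $\mathcal C$.

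For \emph{sufficiency}, since $\mathcal C$ is closed and convex the Euclidean projection $p:=\Pi(\cdot,\mathcal C)$ is single-valued, nonexpansive, and $p(\bar x)=\bar x$; for $\hat F:=F\circ p$ the graph $\gph\hat F$ is locally closed around $(\bar x,\bar y)$ (a convergent sequence in $\gph\hat F$ yields, after applying $p$ to the first coordinates, a convergent sequence in the locally closed set $\gph F$). Using $p|_{\mathcal C}=\mathrm{id}$ and nonexpansiveness of $p$, one checks that \emph{$F$ has the Aubin property with respect to $\mathcal C$ around $(\bar x,\bar y)$ if and only if $\hat F$ has the ordinary Aubin property around $(\bar x,\bar y)$}: for ``$\Rightarrow$'', $\hat F(u)\cap V=F(p(u))\cap V\subseteq F(p(x))+\kappa\|p(u)-p(x)\|\B\subseteq\hat F(x)+\kappa\|u-x\|\B$ once $u,x$ are close enough to $\bar x$ that $p(u),p(x)\in\mathcal C\cap U$; and ``$\Leftarrow$'' is immediate since $\hat F$ and $F$ agree on $\mathcal C$. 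By the classical Mordukhovich criterion in finite dimensions (see \cite{Mor06,Roc98}), $\hat F$ has the Aubin property around $(\bar x,\bar y)$ iff $D^*\hat F(\bar x,\bar y)(0)=\{0\}$, so it remains to prove $D^*\hat F(\bar x,\bar y)(0)\subseteq D^*_{\mathcal C}F(\bar x,\bar y)(0)$, i.e.
\[
N\big((\bar x,\bar y),\gph\hat F\big)\cap\big(\R^n\times\{0\}\big)\ \subseteq\ N_{\mathcal C\times\R^m}\big((\bar x,\bar y),\gph F_{\mathcal C}\big).
\]
For this I would use that, locally around $(\bar x,\bar y)$, $\gph\hat F=\{(x'+\nu,y):(x',y)\in\gph F_{\mathcal C},\ \nu\in N(x',\mathcal C)\}$ (because $p^{-1}(x')=x'+N(x',\mathcal C)$ for convex $\mathcal C$), together with the projection description of $N_{\mathcal C\times\R^m}$ in Proposition~\ref{pro1}(iv): a Fréchet normal $(x_k^*,-\eta_k)$ to $\gph\hat F$ at a base point $(x_k,y_k)\to(\bar x,\bar y)$, tested against the normal fibre of $\mathcal C$ through $x_k':=p(x_k)$, satisfies $x_k^*\in(N(x_k',\mathcal C))^\circ$, and after removing its ``outward'' component it becomes a $\mathcal C$-feasible Fréchet normal of $\gph F_{\mathcal C}$ at $(x_k',y_k)$; passing to the outer limit \eqref{limiting-normal-cone} delivers the inclusion.

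The step I expect to be the main obstacle is precisely this last inclusion. Testing against the normal fibre only yields $x_k^*\in(N(x_k',\mathcal C))^\circ=T(x_k',\mathcal C)=\cl\mathcal R(x_k',\mathcal C)$, whereas membership in $\hat N_{\mathcal C\times\R^m}$ (see \eqref{def-fre-normal-cone}) demands the genuine radial feasibility $x_k^*\in\mathcal R(x_k',\mathcal C)$. When $\mathcal C$ is a convex polyhedron — the only case needed in the remainder of the paper — the radial cone $\mathcal R(x,\mathcal C)$ is closed, so $\mathcal R(x,\mathcal C)=T(x,\mathcal C)$ and the difficulty vanishes; in the general convex case the gap has to be absorbed into the outer limit defining $N_{\mathcal C}$ by perturbing the base points slightly towards $\ir\mathcal C$. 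A self-contained alternative to the reduction is to negate the Aubin property with respect to $\mathcal C$, apply Ekeland's variational principle to $(x,y)\mapsto\|y-y_k\|$ on the locally closed set $\gph F_{\mathcal C}$, and invoke the extremal principle, keeping every perturbation point inside $\mathcal C$ by convexity; there the same radial-versus-tangent subtlety reappears, alongside the routine need to exclude $F(x_k)=\emptyset$ along the ``bad'' sequence (which, in fact, cannot occur once \eqref{thm1-eqa} holds). Throughout, convexity of $\mathcal C$ is indispensable: it provides the in-set test points in the necessity part and the single-valued nonexpansive projection, together with the polarity $N(x,\mathcal C)^\circ=T(x,\mathcal C)$, in the sufficiency part.
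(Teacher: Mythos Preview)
The paper does not prove this theorem at all: it is stated in the preliminaries and attributed to \cite{TQ01-ZNU,TQ02-ZNU}. There is therefore no ``paper's own proof'' to compare your attempt against; your proposal has to stand on its own.

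Your necessity argument is clean and correct. The choice of the test curve $t\mapsto(x_k+tx_k^*,v_t)$, with $x_k+tx_k^*\in\mathcal C$ guaranteed by convexity and the radial condition $x_k+p_kx_k^*\in\mathcal C$, is exactly the right way to exploit the definition of $\hat N_{\mathcal C\times\R^m}$, and the resulting estimate $\|x_k^*\|\le\kappa\|y_k^*\|$ follows as you wrote.

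The sufficiency argument, however, is not complete at the level of generality of the statement. You correctly identify the obstacle yourself: from a Fr\'echet normal $(x_k^*,-\eta_k)$ to $\gph\hat F$ you can only conclude $x_k^*\in T(p(x_k),\mathcal C)=\cl\mathcal R(p(x_k),\mathcal C)$, whereas membership in $\hat N_{\mathcal C\times\R^m}$ requires the genuine radial condition $x_k^*\in\mathcal R(p(x_k),\mathcal C)$. Saying that ``the gap has to be absorbed into the outer limit by perturbing the base points slightly towards $\ir\mathcal C$'' is a plan, not a proof: you have not shown that such a perturbation preserves the Fr\'echet normal inequality, nor handled the case $\ir\mathcal C=\emptyset$. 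Separately, the phrase ``after removing its outward component it becomes a $\mathcal C$-feasible Fr\'echet normal of $\gph F_{\mathcal C}$ at $(x_k',y_k)$'' is not justified: the Fr\'echet inequality you have is centred at $(x_k,y_k)\in\gph\hat F$, and transporting it to $(x_k',y_k)=(p(x_k),y_k)\in\gph F_{\mathcal C}$ with the correct little-$o$ term requires an argument you have not supplied. Your remark that the difficulty disappears when $\mathcal C$ is a convex polyhedron is accurate and suffices for all applications in the present paper, but the theorem as stated covers arbitrary closed convex $\mathcal C$, so for a full proof you should either carry out the perturbation argument in detail or follow the original derivation in \cite{TQ01-ZNU,TQ02-ZNU}.
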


We now introduce the concept of subdifferentials with respect to a set of single-valued mappings.

\begin{definition}[\cite{TQ02-ZNU}]
{\rm Consider the extended real-valued function $f:\mathbb R^n\to \bar\R.$ Let $\mathcal C$ be a closed and convex subset of $\mathbb R^n$, and let $\bar x\in\mathcal C.$

{\rm (i)} A vector $x^*\in\R^n$ satisfying $(x^*,-1)\in N_{\mathcal C\times \R}((\bar x,f(\bar x)),\epi f)$ is called a {\it limiting subgradient vector with respect to} $\mathcal C$ of $f$ at $\bar x.$  The set of all of limiting subgradient vectors with respect to $\mathcal C$ of $f$ at $\bar x$ is called the {\it limiting subdifferential with resepct to} $\mathcal C$ of $f$ at $\bar x.$ Thus
$$
\partial_{\mathcal C}f(\bar x):=\left\{x^*\in \mathbb R^n\mid (x^*,-1)\in N_{\mathcal C\times \R}((\bar x,f(\bar x)),\epi f)\right\}.
$$

{\rm (ii)} The {\it horizon subdifferential with resepct to} $\mathcal C$ of $f$ at $\bar x,$ denoted $\partial^{\infty}_{\mathcal C}f(\bar x)$, is defined by
$$
\partial^{\infty}_{\mathcal C}f(\bar x):=\left\{x^*\in \mathbb R^n\mid (x^*,0)\in N_{\mathcal C\times\R}((\bar x,f(\bar x)),\epi f)\right\}.
$$

If $\bar x\notin \dom f\cap\mathcal C,$ we put $\partial_{\mathcal C}f(\bar x):=\partial^{\infty}_{\mathcal C}f(\bar x):=\emptyset.$ In the case of $\mathcal C=\dom f,$ we write $\bar\partial f(\bar x), \bar\partial^{\infty}f(\bar x)$ instead of $\partial_{\mathcal C}f(\bar x),\partial^{\infty}_{\mathcal C}f(\bar x)$ and say it  the {\it relative limiting {\rm and} horizon subdifferentials}, respectively.}
\end{definition}

The necessary and sufficient condition for a singleton mapping satisfying the locally Lipschitz continuous property with respect to a set is presented in  the following theorem.

\begin{theorem}[\cite{TQ02-ZNU}]\label{thm2}
Let $f:\mathbb R^n\to \bar \R$, and let $\mathcal C$ be nonempty, closed, and  convex. Let $\bar x\in \mathcal C$ and $f\in \mathcal F(\bar x).$ Then $f$ is locally Lipschitz continuous with respect to $\mathcal C$ around $\bar x$ if and only if $\partial^{\infty}_{\mathcal C}f(\bar x)=\{0\}.$
\end{theorem}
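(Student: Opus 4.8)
The statement to prove is Theorem~\ref{thm2}: that for $f\in\mathcal F(\bar x)$ with $\bar x\in\mathcal C$ ($\mathcal C$ nonempty, closed, convex), $f$ is locally Lipschitz continuous with respect to $\mathcal C$ around $\bar x$ if and only if $\partial^{\infty}_{\mathcal C}f(\bar x)=\{0\}$. My plan is to reduce this to Theorem~\ref{thm1} (the Mordukhovich criterion with respect to a set) applied to the epigraphical multifunction $\mathcal E^f$, exactly in the way the classical result $\mathrm{lip}\,f(\bar x)<\infty \iff \partial^{\infty}f(\bar x)=\{0\}$ is obtained from the classical Mordukhovich criterion. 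First I would recall the key equivalence: $f$ is locally Lipschitz continuous with respect to $\mathcal C$ around $\bar x$ if and only if the profile mapping $\mathcal E^f:\R^n\rightrightarrows\R$, $x\mapsto f(x)+\R_+$, has the Aubin property with respect to $\mathcal C$ around $(\bar x,f(\bar x))$. This is the relative/with-respect-to-$\mathcal C$ analogue of the standard fact (see Rockafellar--Wets, Theorem~9.13 and its surrounding discussion) that Lipschitz continuity of $f$ is equivalent to the Aubin (Lipschitz-like) property of the epigraphical mapping; the monotonicity of the fibers $\mathcal E^f(x)=[f(x),\infty)$ makes the inclusion $\mathcal E^f(u)\cap V\subset\mathcal E^f(x)+\kappa\|u-x\|\B$ literally equivalent to the one-sided estimate $f(x)\le f(u)+\kappa\|u-x\|$ for $x,u$ near $\bar x$, which by symmetry is the Lipschitz estimate \eqref{lips-con}. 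One must check this equivalence carefully in the ``with respect to $\mathcal C$'' setting, restricting all points $x,u$ to $\mathcal C\cap U$ and using that $f\in\mathcal F(\bar x)$ guarantees $\gph\mathcal E^f\cap(\mathcal C\times\R)=\epi f_{\mathcal C}$ is locally closed around $(\bar x,f(\bar x))$, which is the standing hypothesis needed to invoke Theorem~\ref{thm1}.

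Granting this reduction, I would apply Theorem~\ref{thm1} to $F=\mathcal E^f$: the Aubin property with respect to $\mathcal C$ of $\mathcal E^f$ around $(\bar x,f(\bar x))$ holds if and only if $D^*_{\mathcal C}\mathcal E^f(\bar x,f(\bar x))(0)=\{0\}$. Now I unwind the coderivative: $x^*\in D^*_{\mathcal C}\mathcal E^f(\bar x,f(\bar x))(0)$ means $(x^*,0)\in N_{\mathcal C\times\R}((\bar x,f(\bar x)),\gph(\mathcal E^f)_{\mathcal C})$, and since $\gph(\mathcal E^f)_{\mathcal C}=\gph\mathcal E^f\cap(\mathcal C\times\R)=\epi f_{\mathcal C}$ and also $\gph\mathcal E^f=\epi f$, this reads $(x^*,0)\in N_{\mathcal C\times\R}((\bar x,f(\bar x)),\epi f)$, which is by definition exactly the statement $x^*\in\partial^{\infty}_{\mathcal C}f(\bar x)$. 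Hence $D^*_{\mathcal C}\mathcal E^f(\bar x,f(\bar x))(0)=\partial^{\infty}_{\mathcal C}f(\bar x)$, and the theorem follows: $f$ is locally Lipschitz continuous with respect to $\mathcal C$ around $\bar x$ $\iff$ $\mathcal E^f$ has the Aubin property with respect to $\mathcal C$ $\iff$ $D^*_{\mathcal C}\mathcal E^f(\bar x,f(\bar x))(0)=\{0\}$ $\iff$ $\partial^{\infty}_{\mathcal C}f(\bar x)=\{0\}$.

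The main obstacle is the first step: establishing rigorously the equivalence between relative Lipschitz continuity of $f$ and the relative Aubin property of $\mathcal E^f$, with all the correct localizations and the correct handling of the set $\mathcal C$. For the direction ``$\mathrm{lip}_{\mathcal C}f(\bar x)<\infty\implies$ Aubin property of $\mathcal E^f$'', given the Lipschitz estimate on $\mathcal C\cap U$ with constant $\kappa$, for $y\in\mathcal E^f(u)\cap V$ (so $y\ge f(u)$) one writes $y\ge f(u)\ge f(x)-\kappa\|u-x\|$ for $x,u\in\mathcal C\cap U$; if $y+\kappa\|u-x\|\ge f(x)$ then $y+\kappa\|u-x\|\in\mathcal E^f(x)$ and hence $y\in\mathcal E^f(x)+\kappa\|u-x\|\B$ (taking the endpoint of the ball on the right side), giving \eqref{Aubin-pro}. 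For the converse, from $\mathcal E^f(u)\cap V\subset\mathcal E^f(x)+\kappa\|u-x\|\B$ applied to the point $y=f(u)\in\mathcal E^f(u)\cap V$ (valid once $V$ is chosen around $f(\bar x)$ and $u$ near $\bar x$, using lower semicontinuity of $f$ on $\mathcal C$ to control $f(u)$ from below near $\bar x$ and hence keep $f(u)\in V$; one may shrink $U$ accordingly), one gets $f(u)\ge z-\kappa\|u-x\|$ for some $z\in\mathcal E^f(x)$, so $z\ge f(x)$ and thus $f(x)\le f(u)+\kappa\|u-x\|$; swapping $x$ and $u$ yields \eqref{lips-con} with $\mathrm{lip}_{\mathcal C}f(\bar x)\le\kappa$. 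A small care point is that local closedness of $\gph\mathcal E^f$ around $(\bar x,f(\bar x))$, required by Theorem~\ref{thm1}, is precisely guaranteed by $f\in\mathcal F(\bar x)$; and one should note, as in the classical theory, that the hypothesis $f\in\mathcal F(\bar x)$ (lower semicontinuity relative to $\mathcal C$ near $\bar x$) is what prevents pathologies and makes the profile-mapping argument valid.
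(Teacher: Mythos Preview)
The paper does not contain a proof of Theorem~\ref{thm2}; it is stated in the preliminaries with a citation to \cite{TQ02-ZNU} and is used as a black box. So there is no ``paper's own proof'' to compare against here.

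That said, your proposal is correct and is exactly the natural route one would expect the cited paper to take: reduce to Theorem~\ref{thm1} via the epigraphical mapping $\mathcal E^f$. Your identification $D^*_{\mathcal C}\mathcal E^f(\bar x,f(\bar x))(0)=\partial^{\infty}_{\mathcal C}f(\bar x)$ is valid; note only that the coderivative in Definition~\ref{coderivative-wrt} is computed on $\gph(\mathcal E^f)_{\mathcal C}=\epi f\cap(\mathcal C\times\R)$, while $\partial^{\infty}_{\mathcal C}f$ is defined via $N_{\mathcal C\times\R}(\cdot,\epi f)$, but Proposition~\ref{pro1}(i) shows these agree since $\hat N_{\mathcal C}(x,\Omega)$ already depends only on $\Omega\cap\mathcal C$. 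The equivalence ``Lipschitz w.r.t.\ $\mathcal C$ $\Leftrightarrow$ Aubin property w.r.t.\ $\mathcal C$ of $\mathcal E^f$'' is handled correctly; the one delicate point you flag---getting $f(u)\in V$ for $u\in\mathcal C$ near $\bar x$ in the ``Aubin $\Rightarrow$ Lipschitz'' direction---is resolved by first applying the Aubin inclusion with $u=\bar x$ to obtain the upper estimate $f(x)\le f(\bar x)+\kappa\|x-\bar x\|$ on $\mathcal C\cap U$, and then combining with relative lower semicontinuity (the hypothesis $f\in\mathcal F(\bar x)$) for the lower estimate, after which you may shrink $U$ so that $f(u)\in V$ for all $u\in\mathcal C\cap U$. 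With that small clarification your argument is complete.
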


For continuously process, we provide necessary optimality conditions due to the generalized differentials with respect to a set to the following mathematical program with equilibrium constraints:
\begin{align}\label{mpecs}
    \min \quad & f(x)\tag{MPEC}\\
    \text{such that } & 0\in G(x) \text{ \rm and } x\in \mathcal C_1\cap\mathcal C_2,\nonumber
\end{align}
where $f(x):\R^n\to\bar \R:=\R\cup\{\infty\}$, $G:\R^n\rightrightarrows\R^m$ and $\mathcal C_i\subset \R^n$ for $i=1,2$ are nonempty, convex, and closed  sets. We first recall the following definition.

\begin{definition}[\cite{TQ02-ZNU}, Definition~7~(ii)]
{\rm Let $\Omega_1,\Omega_2,\mathcal C_1$, and $\mathcal C_2$ be nonempty sets, and let $\bar x\in \Omega_1\cap\Omega_2\cap\mathcal C_1\cap\mathcal C_2.$ Denote $\Omega:=\Omega_1\cap\Omega_2$ and $\mathcal C:=\mathcal C_1\cap\mathcal C_2$. We say that $\{\Omega_1,\Omega_2\}$ are {\it normal-densed} in $\{\mathcal C_1,\mathcal C_2\}$ at $\bar x$ if whenever there are sequences $x_{ik}\xrightarrow{\Omega_i\cap\mathcal C_i}\bar x, x_k\xrightarrow{\Omega\cap\tbd \mathcal C}\bar x$ and $x_{ik}^*\in \hat N(x_{ik},\Omega_i\cap\mathcal C_i)$ for $i=1,2$, $x_k^*\in \mathcal R(x_k,\mathcal C)$ satisfying $x_{ik}^*\to x_i^*$ with some $x_i^*$ for $i=1,2$ and $x_k^*\to x_{1}^*+x_{2}^*,$ we find sequences $\tilde x_{ik}\xrightarrow{\Omega_i\cap\mathcal C_i}\bar x$ and $\tilde x_{ik}^*\in \hat N_{\mathcal C_i}(\tilde x_{ik}, \Omega_i)$ for $i=1,2$  such that $\tilde x_{ik}^*\to \tilde x_i^*$ with some $\tilde x_i^*$ for $i=1,2$ and
\begin{equation}\label{slqc-eq00}
\tilde x_{1}^*+\tilde x_{2}^*=x_1^*+x_2^*
\end{equation}
and $\max\{\Vert \tilde x_{1}^*\Vert, \Vert \tilde x_{2}^*\Vert\}>0 \text{ \rm whenever } \max\{\Vert x_{1}^*\Vert, \Vert x_{2}^*\Vert\} >0.$}
\end{definition}

\begin{theorem}[\cite{TQ02-ZNU}, Theorem~13]\label{nec-con} Consider the \ref{mpecs} problem. Let $\bar x$ be a local minimizer to \eqref{mpecs}, and let $f_{\mathcal C_1}\in \mathcal F(\bar x)$. Assume that the following qualification conditions are fulfilled:

{\rm($q_1$)} $\partial_{\mathcal C_1}^{\infty}f(\bar x)\cap [-D_{\mathcal C_2}^*G(\bar x,0)(0)]=\{0\}$.

{\rm($q_2$)} $\{\Omega_1,\Omega_2\}$ are normal-densed in $\{\mathcal C_1\times\R^{m+1},\mathcal C_2\times\R^{m+1}\}$ at $(\bar x,0,f(\bar x)),$ where $\Omega_i$ for $i=1,2$ is, respectively, defined by
\begin{equation}\label{omegai}
\Omega_1:=\{(x,y,z)\mid (x,z)\in \epi f, y\in \R^m\} \; \text{ \rm and } \Omega_2:=\gph G\times\R.\end{equation}

Then
\begin{equation}\label{thm14-eq0}
0\in \partial f_{\mathcal C_1}(\bar x)+D^*_{\mathcal C_2}G(\bar x,0)(0).
\end{equation}
\end{theorem}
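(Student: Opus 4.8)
The plan is to reduce the \ref{mpecs} problem to an unconstrained minimization of a suitable extended-real-valued function over $\mathcal C_1\cap\mathcal C_2$, apply a Fermat-type stationarity rule with respect to a set, and then split the resulting subdifferential inclusion using a sum rule that is controlled precisely by the two qualification conditions $(q_1)$ and $(q_2)$. More concretely, first I would introduce the function $h:\R^n\to\bar\R$ defined by $h(x):=f(x)+\delta_{G^{-1}(0)}(x)$ (the indicator being taken in the extended-real sense), so that $\bar x$ being a local minimizer of \eqref{mpecs} means exactly that $\bar x$ is a local minimizer of $h_{\mathcal C_1\cap\mathcal C_2}$. The Fermat rule for the limiting subdifferential with respect to a set (the generalized-differential analogue that must appear in \cite{TQ02-ZNU}, of which Theorem~\ref{thm2} is a companion) then yields $0\in\partial_{\mathcal C_1\cap\mathcal C_2}h(\bar x)$, provided $h_{\mathcal C_1\cap\mathcal C_2}\in\mathcal F(\bar x)$; the latter follows from $f_{\mathcal C_1}\in\mathcal F(\bar x)$ together with the closedness of $\gph G$ and of $\mathcal C_1,\mathcal C_2$.

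The second step is to pass from $0\in\partial_{\mathcal C_1\cap\mathcal C_2}h(\bar x)$ to $0\in\partial f_{\mathcal C_1}(\bar x)+D^*_{\mathcal C_2}G(\bar x,0)(0)$. I would realize this at the level of epigraphs and graphs: $\epi h_{\mathcal C}$ is, up to the extra coordinate, the intersection of $\Omega_1$ and $\Omega_2$ from \eqref{omegai} inside $(\mathcal C_1\times\R^{m+1})\cap(\mathcal C_2\times\R^{m+1})$, because membership $(x,z)\in\epi h$ with $x\in\mathcal C$ is equivalent to $z\ge f(x)$, $x\in\mathcal C_1$ (this is $\Omega_1\cap(\mathcal C_1\times\R^{m+1})$ in the $(x,y,z)$ variables with $y=0$ forced by $0\in G(x)$, i.e. $(x,0,z)\in\Omega_2\cap(\mathcal C_2\times\R^{m+1})$). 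Thus a normal vector to $\epi h_{\mathcal C}$ decomposes, via an intersection rule for limiting normal cones with respect to a set, into a normal vector to $\Omega_1$ and one to $\Omega_2$; reading off the first coordinate of the $\Omega_1$-part gives an element of $\partial f_{\mathcal C_1}(\bar x)$ (with $z$-component $-1$) and the $\Omega_2$-part gives, by Definition~\ref{coderivative-wrt} and the structure $\Omega_2=\gph G\times\R$, an element of $D^*_{\mathcal C_2}G(\bar x,0)(0)$. Here $(q_1)$ is exactly the mixed qualification condition guaranteeing that no nontrivial horizon component of $f$ cancels against $D^*_{\mathcal C_2}G(\bar x,0)(0)$ — i.e. it keeps the $z$-component of the $\Omega_1$-normal equal to $-1$ rather than $0$ — and $(q_2)$, the normal-densedness of $\{\Omega_1,\Omega_2\}$ in $\{\mathcal C_1\times\R^{m+1},\mathcal C_2\times\R^{m+1}\}$, is precisely the hypothesis under which the intersection/sum rule for the limiting normal cone with respect to a set is valid.

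The main obstacle I expect is the second step: a fuzzy-to-exact passage for normal cones with respect to a set is substantially more delicate than the classical case, since the Fréchet normal cones $\hat N_{\mathcal C_i}$ live only on rays that return to $\mathcal C_i$ (cf. \eqref{def-fre-normal-cone} and Proposition~\ref{pro1}(i)), and one must run a simultaneous limiting procedure along sequences $x_{ik}$ approaching $\bar x$ within $\Omega_i\cap\mathcal C_i$ while keeping the approximate normals summing correctly and bounded away from zero in the relevant component. This is exactly what the normal-densedness condition $(q_2)$ is designed to handle — it lets one replace badly-behaved approximate normals (built from $\hat N(\cdot,\Omega_i\cap\mathcal C_i)$) by well-behaved ones in $\hat N_{\mathcal C_i}(\cdot,\Omega_i)$ with the same sum and preserved nontriviality, as encoded in \eqref{slqc-eq00}. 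I would therefore lean on the corresponding intersection rule and Fermat rule already established in \cite{TQ02-ZNU}, invoke them with $\Omega_1,\Omega_2$ as in \eqref{omegai}, and then spend the bulk of the write-up on the bookkeeping that translates the coordinate decomposition of the resulting normal vector into the subdifferential-plus-coderivative form \eqref{thm14-eq0}, checking in particular that the qualification condition $(q_1)$ forces the subdifferential (rather than horizon-subdifferential) alternative.
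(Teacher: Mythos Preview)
This theorem is not proved in the present paper: it is quoted verbatim from \cite[Theorem~13]{TQ02-ZNU} as a preliminary result, so there is no proof here against which to compare your proposal. Your outline is plausible as a reconstruction of how such a result would be established --- reduce to a Fermat rule with respect to $\mathcal C_1\cap\mathcal C_2$, then apply an intersection/sum rule for limiting normal cones with respect to a set, with $(q_2)$ supplying the normal-densedness needed for that rule and $(q_1)$ ruling out the horizon alternative --- but the actual verification would have to be checked against \cite{TQ02-ZNU} itself, not this paper.
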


\section{Normal Cones with respect to a Set of the Graphic of the Normal Cone Mapping}\label{normalcones}

This section is developed to establish normal cones with respect to a $\mathcal C\times\R^n$ of the normal cone mapping $\mathcal N(x):=N(x,\Theta),$ where $\mathcal C$ and $\Theta$ are defined by
\begin{equation}\label{Theta}
\Theta:=\left\{x\in \R^n\mid \langle a_i,x\rangle\le b_i\; \forall i\in I_1\right\}
\end{equation}
and
\begin{equation}\label{mathcalC}
\mathcal C:=\left\{x\in \R^n\mid \langle a_i,x\rangle\le 0\; \forall i\in I_2\right\},
\end{equation}
where $I_1:=\{1,\ldots, \ell_1\}$, $I_2:=\{\ell_1+1,\ldots, \ell\}$, $a_i\in \R^n$ and $b_i\in \R$ for each $i\in I_1\cup I_2.$ For each $x\in \Theta,$ and/or $x\in\mathcal C$, we define
\begin{equation}\label{I1}
    I_1(x):=\left\{i\in I_1\mid \langle a_i,x\rangle =b_i\right\}
\end{equation}
anhd/or
\begin{equation}\label{I2}
    I_2(x):=\left\{i\in I_2\mid \langle a_i,x\rangle=0\right\},
\end{equation}
respectively.
The set $I_1(x)$ (and/or $I_2(x)$) is called the {\it active index set} of $\Theta$ (and/or $\mathcal C$) at $x.$ It is known from \cite{Roc96} that the normal and tangent cones to $\Theta$ at $x\in \Theta$ are respectively computed by
\begin{align}\label{normalcone}
N(x,\Theta)&=\cone\{a_i\mid i\in I_1(x)\},\\
T(x,\Theta)&=\left\{u\in \R^n\mid \langle a_i, u\rangle\le 0\; \forall i\in I_1(x)\right\}.\label{tangentcone}
\end{align}
We now define
\begin{equation}\label{I}
I:=\{1, \ldots, \ell\} \; \text{ \rm and }\;
 I(x):=\left\{ i \in I \mid \langle a_i,x \rangle= 0 \right\}= I_1(x)\cup I_2(x).
\end{equation}
It is easy to see that
\begin{equation}\label{tangentThetaC}
T(x,\Theta\cap\mathcal C)=\left\{u\in \R^n\mid \langle a_i, u\rangle\le 0\;  \forall i\in I(x)\right\}.
\end{equation}
Let $\bar x\in\Theta$ and $\bar x^*\in N(\bar x,\Theta).$ There exist, by \eqref{normalcone}, a set $\mathcal I\subset I_1(\bar x)$ and real numbers $\lambda_i\ge 0$ for all $i\in \mathcal I$ such that $\bar x^*= \sum_{i\in \mathcal I}\lambda_ia_i.$
We define \begin{equation}\label{tildeI}\mathcal I(\bar x, \bar x^*):=
\{i\in \mathcal I  \mid \lambda_i>0\}.
\end{equation}
Let $P\subset Q\subset I$ with $P\subset I_1$. We now denote
$$\mathcal T_{Q}:=\left\{u\in \R^n\mid \langle a_i, u \rangle\le 0\; \forall i\in Q\right\},$$
$$
B_{Q,P}:=\left\{ u\in \R^n\mid u\in \mathcal T_{Q}, \langle a_i, u\rangle=0\; \forall i\in P \right\},
$$
and
$$
A_{Q,P}:=\cone\{a_i\mid i\in Q\setminus P\}+{\rm span}\{a_i\mid i\in P\}.
$$
It is known from \cite[Lemma 3.3]{HMN2010} that \begin{equation}\label{AB}
    B_{Q,P}^{\circ} = A_{Q,P}.
\end{equation}
The {\it normal cone mapping} to $\Theta$, denoted by $\mathcal N:\R^n\rightrightarrows\R^n$, is defined by
\begin{equation}\label{mathcalN}
\mathcal N(x):=\begin{cases} N(x,\Theta)& \text{\rm if } x\in \Theta;\\
\emptyset &\text{ \rm otherwise}.
\end{cases}
\end{equation}

Now it is the right time to provide the calculation formula for normal cones with respect to $\mathcal C\times\R^n$ of $\gph \mathcal N$. For convenience in comparing results, we will represent the calculation formulas of normal cones in a comparable form to those in \cite{HMN2010}. It should be noted that, in the proof of the inclusion ``$\subset$'' of the following theorem, we use the similar arguments as in  \cite[Theorem~3.4]{HMN2010}. However, in the proof of the opposite inclusion ``$\supset$'', instead of using the {\it reduction and absurdum} as in \cite[Theorem~3.4]{HMN2010}, we will use another way.

\begin{theorem}\label{FNCFormula}
Let $\Omega$ and $\mathcal C$ be respectively given by \eqref{Theta} and \eqref{mathcalC}. Let $\bar x\in \mathcal C$ and $(\bar x,\bar x^*)\in \gph \mathcal N$. Then the Fr\'{e}chet normal cone with respect to $\mathcal C\times\R^n$ of $\gph \mathcal N$ at $(\bar x,\bar x^*)$ is given by
\begin{align}\label{Fnormal-normalcone1}
\hat N_{\mathcal C\times\R^n}((\bar x,\bar x^*),\gph \mathcal N)&=\left(\big(T(\bar x,\Theta\cap\mathcal C)\cap\{\bar x^*\}^{\perp}\big)^{\circ}\cap T(\bar x,\mathcal C)\right) \times \left(T(\bar x,\Theta)\cap\{\bar x^*\}^{\perp}\right)\\
&=(A_{\bar I, \tilde I}\cap \mathcal T_{\bar I_2})\times B_{\bar I_1,\tilde I},\label{Fnormal-normalcone2}
\end{align}
where $\bar I:=I(\bar x), \bar I_i:=I_i(\bar x)$ for $i=1,2$ and $\tilde I:=\mathcal I(\bar x,\bar x^*)$ with $I(\bar x), I_1(\bar x), I_2(\bar x)$ and  $\mathcal I(\bar x,\bar x^*)$ given by \eqref{I}, \eqref{I1}, \eqref{I2} and \eqref{tildeI} respectively.
\end{theorem}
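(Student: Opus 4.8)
The plan is to establish the two displayed equalities separately, proving the first by a direct analysis of the defining limit for the Fréchet normal cone with respect to a set, and deriving the second from the first via the polarity relation \eqref{AB} together with the explicit forms of the tangent cones in \eqref{normalcone}--\eqref{tangentThetaC}. I would begin by unwinding Proposition~\ref{pro1}(i): since $\mathcal C$ is a convex polyhedron, $\mathcal R((\bar x,\bar x^*),\mathcal C\times\R^n) = \mathcal R(\bar x,\mathcal C)\times\R^n$, and because $\mathcal C$ is a polyhedral cone this radial set is exactly $T(\bar x,\mathcal C)\times\R^n$. Hence $\hat N_{\mathcal C\times\R^n}((\bar x,\bar x^*),\gph\mathcal N) = \hat N((\bar x,\bar x^*),\gph\mathcal N\cap(\mathcal C\times\R^n))\cap(T(\bar x,\mathcal C)\times\R^n)$, and the whole problem reduces to computing the ordinary Fréchet normal cone to $\gph\mathcal N_{\mathcal C}:=\gph\mathcal N\cap(\mathcal C\times\R^n)$ at $(\bar x,\bar x^*)$.

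For that computation I would follow the strategy of \cite[Theorem~3.4]{HMN2010} for the inclusion ``$\subset$''. Take $(u^*,v^*)\in\hat N((\bar x,\bar x^*),\gph\mathcal N_{\mathcal C})$. Using the polyhedral structure, a sequence $(x,x^*)\in\gph\mathcal N_{\mathcal C}$ approaching $(\bar x,\bar x^*)$ can be chosen with $I_1(x)\subset I_1(\bar x)$, $I_2(x)\subset I_2(\bar x)$, and $\mathcal I(x,x^*)\supset\tilde I$ held fixed along the sequence; plugging admissible perturbations into the ``$\limsup\le 0$'' inequality in the definition of $\hat N$ yields, in the limit, that $u^*$ annihilates $T(\bar x,\Theta\cap\mathcal C)\cap\{\bar x^*\}^\perp$ from the correct side and that $v^*\in(T(\bar x,\Theta)\cap\{\bar x^*\}^\perp)$; one also picks up $u^*\in T(\bar x,\mathcal C)$ from the radial constraint. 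This gives ``$\subset$'' for \eqref{Fnormal-normalcone1}. The inclusion ``$\supset$'' is where the authors advertise a departure from \cite{HMN2010}: rather than reductio ad absurdum, I would argue directly that any $(u^*,v^*)$ in the right-hand side of \eqref{Fnormal-normalcone1} satisfies the defining inequality by exploiting the local polyhedral description $\gph\mathcal N_{\mathcal C} = \bigcup (\text{faces})$ near $(\bar x,\bar x^*)$ — more precisely, that near $(\bar x,\bar x^*)$ one has the conic over-approximation $\gph\mathcal N_{\mathcal C}-(\bar x,\bar x^*)\subset \big(T(\bar x,\Theta\cap\mathcal C)\cap\{\bar x^*\}^\perp\big)\times\big(T(\bar x,\Theta)\cap\{\bar x^*\}^\perp\big) + (\text{radial }\mathcal C\text{ term})$, which can be checked case by case on which active indices drop. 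Then a vector polar to the first factor and lying in $T(\bar x,\mathcal C)$ in the second coordinate automatically gives a nonpositive inner product.

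Passing from \eqref{Fnormal-normalcone1} to \eqref{Fnormal-normalcone2} is bookkeeping with the established polarity: by \eqref{tangentcone} and \eqref{tangentThetaC}, $T(\bar x,\Theta\cap\mathcal C)=\mathcal T_{\bar I}$ and $T(\bar x,\Theta)=\mathcal T_{\bar I_1}$; since $\bar x^*=\sum_{i\in\tilde I}\lambda_i a_i$ with all $\lambda_i>0$, the condition $\langle x^*,u\rangle=0$ for $u\in\mathcal T_{\bar I}$ (resp. $\mathcal T_{\bar I_1}$) forces $\langle a_i,u\rangle=0$ for every $i\in\tilde I$, so $T(\bar x,\Theta\cap\mathcal C)\cap\{\bar x^*\}^\perp = B_{\bar I,\tilde I}$ and $T(\bar x,\Theta)\cap\{\bar x^*\}^\perp = B_{\bar I_1,\tilde I}$; applying \eqref{AB} gives $\big(B_{\bar I,\tilde I}\big)^\circ = A_{\bar I,\tilde I}$, and intersecting with $T(\bar x,\mathcal C)=\mathcal T_{\bar I_2}$ produces the first factor $A_{\bar I,\tilde I}\cap\mathcal T_{\bar I_2}$, while the second factor is already $B_{\bar I_1,\tilde I}$. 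The main obstacle, I expect, is the ``$\supset$'' inclusion in \eqref{Fnormal-normalcone1}: one must verify that the conic over-approximation of $\gph\mathcal N_{\mathcal C}$ near $(\bar x,\bar x^*)$ is tight enough in every combinatorial case of dropped active indices (in particular handling indices in $I_1(\bar x)\setminus\tilde I$ where the multiplier may or may not vanish, and the interaction with the $\mathcal C$-constraint on the $x$-component), so that no spurious directions sneak in; everything else is either a transcription of the $\limsup$ argument from \cite{HMN2010} or a polarity computation.
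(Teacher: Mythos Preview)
Your plan for the inclusion ``$\subset$'' and for deriving \eqref{Fnormal-normalcone2} from \eqref{Fnormal-normalcone1} matches the paper's proof essentially step for step: the paper too reduces via Proposition~\ref{pro1}(i) to the ordinary Fr\'echet normal cone of $\gph\mathcal N\cap(\mathcal C\times\R^n)$ intersected with $T(\bar x,\mathcal C)\times\R^n$, probes the $\limsup$ condition first with $x=\bar x$ (to get the $v$-component) and then with $x_k:=\bar x+k^{-1}w$ for $w\in T(\bar x,\Theta\cap\mathcal C)\cap\{\bar x^*\}^\perp$ (to get the $u$-component), and finishes with the polarity bookkeeping you describe.

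The genuine gap is in your ``$\supset$'' argument. The product over-approximation you propose,
\[
\gph\mathcal N_{\mathcal C}-(\bar x,\bar x^*)\subset \big(T(\bar x,\Theta\cap\mathcal C)\cap\{\bar x^*\}^{\perp}\big)\times\big(\cdots\big)+(\text{radial term}),
\]
is false in the first factor: for $(x,x^*)\in\gph\mathcal N_{\mathcal C}$ near $(\bar x,\bar x^*)$ one only has $\langle\bar x^*,x-\bar x\rangle\le 0$, not $=0$, so the tangent direction $w$ need not lie in $\{\bar x^*\}^\perp$. No amount of case-splitting on dropped indices fixes this, because a decoupled product bound cannot capture the constraint $x^*\in N(x,\Theta)$ that ties the two components together. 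The paper's proof does \emph{not} attempt a product over-approximation. Instead it verifies $\langle(u,v),(w,z)\rangle\le 0$ for every tangent vector $(w,z)\in T((\bar x,\bar x^*),\gph\mathcal N_{\mathcal C})$ using two ingredients you are missing: first, the identity
\[
\big(T(\bar x,\Theta\cap\mathcal C)\cap\{\bar x^*\}^\perp\big)^{\circ}=T(\bar x^*,N(\bar x,\Theta\cap\mathcal C))=\{\lambda(\tilde x^*-\bar x^*)\mid \lambda\ge 0,\ \tilde x^*\in N(\bar x,\Theta\cap\mathcal C)\},
\]
which lets one write $u=\lambda(\tilde x^*-\bar x^*)$ and obtain $\langle u,w_k\rangle\le -\lambda\langle\bar x^*,w_k\rangle$; second, the coupling coming from $\bar x^*+t_kz_k\in N(\bar x+t_kw_k,\Theta)$, which after pairing with $-t_kw_k$ yields $-\langle\bar x^*,w_k\rangle\le t_k\langle z_k,w_k\rangle\to 0$. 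Together these give $\langle u,w\rangle\le 0$ without ever assuming $w\in\{\bar x^*\}^\perp$; the bound $\langle v,z\rangle\le 0$ then follows separately from $N(\bar x+t_kw_k,\Theta)\subset N(\bar x,\Theta)$ and $v\in\big(N(\bar x,\Theta)\big)^{\circ}\cap\{\bar x^*\}^\perp$. That cross-term cancellation is the missing idea in your outline.
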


\begin{proof} First, we prove   relation \eqref{Fnormal-normalcone1}.
Taking $(u,v)\in \hat N_{\mathcal C\times\R^n}((\bar x,\bar x^*),\gph \mathcal N),$ we see from the definition that $(u,v)\in \hat N((\bar x,\bar x^*),\gph \mathcal N\cap\mathcal C\times\R^n)$ and $(u,v)\in T(\bar x,\mathcal C)\times\R^n.$ This implies that
\begin{equation}\label{thm19eq1}
\limsup_{(x,x^*)\xrightarrow{\tgph \mathcal N\cap\mathcal C\times\R^n}(\bar x,\bar x^*)}\dfrac{\langle (u,v), (x,x^*)-(\bar x,\bar x^*)\rangle}{\Vert (x,x^*)-(\bar x,\bar x^*)\Vert}\le 0.
\end{equation}
Picking $x=\bar x$ in \eqref{thm19eq1}, we obtain
\begin{equation*}\label{thm19eq2}
\limsup_{x^*\xrightarrow{\mathcal N(\bar x)}\bar x^*}\dfrac{\langle v, x^*-\bar x^*\rangle}{\Vert x^*-\bar x^*\Vert}\le 0
\end{equation*}
which gives us that
$$
v\in N(\bar x^*, \mathcal N(\bar x))=\big(\mathcal N(\bar x)\big)^{\circ}\cap\{\bar x^*\}^{\perp}=T(\bar x,\Theta)\cap \{\bar x^*\}^{\perp}.
$$
We next show that $u\in \big(T(\bar x,\Theta\cap\mathcal C)\cap \{\bar x^*\}^{\perp}\big)^{\circ}$, which is sufficient to prove \begin{equation}\label{thm19eq3}
\langle u,w\rangle\le 0\; \text{ \rm for all } w\in T(\bar x,\Theta\cap\mathcal C)\cap \{\bar x^*\}^{\perp}.
\end{equation}
To do this, we arbitrarily take $w\in T(\bar x,\Omega\cap\mathcal C)\cap \{\bar x^*\}^{\perp}$ and then we have $\langle a_i, w \rangle\le 0$ for all $i\in I(\bar x)$  and $\langle \bar x^*, w\rangle = 0.$ Construct the sequence $x_k:=\bar x+k^{-1}w$ for each $k\in\N.$ It is easy to see that
$$
\langle a_i, x_k\rangle = \langle a_i,\bar x \rangle+k^{-1}\langle a_i, w\rangle = k^{-1}\langle a_i, w\rangle\le 0, \; \forall i\in I(\bar x).
$$

On the other hand, since $\langle a_i, \bar x\rangle<0$ for any $i\in I\setminus I(\bar x)$,  we can choose $k_0\in \N$ such that the following inequalities hold:
$$
\langle a_i, \bar x\rangle\le k^{-1}\langle a_i,w\rangle, \; \forall i\in I\setminus I(\bar x) \text{ \rm and } k\ge k_0,
$$
which  implies that $x_k\in \mathcal C\cap \Theta$ for all $k\ge k_0.$ Furthermore, since $\bar x^*\in \mathcal N(\bar x)$ and $\langle \bar x^*, w\rangle = 0,$ for any $x\in \Theta,$ we have
$$
\langle \bar x^*, x-x_k \rangle = \langle \bar x^*, x-\bar x\rangle - k^{-1}\langle \bar x^*, w\rangle =\langle \bar x^*, x-\bar x\rangle\le 0.
$$
It follows that $\bar x^*\in N(x_k,\Theta)=\mathcal N(x_k)$, so $(x_k,\bar x^*)\in \gph \mathcal N\cap\mathcal C\times\R^n$ for all $k\ge k_0$. Taking $(x,x^*):=(x_k,\bar x^*)$ in \eqref{thm19eq1}, we have
$$
\limsup_{k\to\infty}\dfrac{\langle u,x_k-\bar x\rangle}{\Vert x_k-\bar x\Vert}\le 0.
$$
By the definition of $x_k$, we obtain
$$
\limsup_{k\to\infty}\dfrac{\langle u, w\rangle}{\Vert w\Vert}\le 0,
$$
which means that $\langle u, w\rangle\le 0$. So we achieve \eqref{thm19eq3} and thus the inclusion ``$\subset$'' in \eqref{Fnormal-normalcone1} holds.

We next prove the opposite inclusion \eqref{Fnormal-normalcone1} holds also. We arbitrarily take
$$
(u,v)\in \left(\big(T(\bar x,\Theta\cap\mathcal C)\cap\{\bar x^*\}^{\perp}\big)^{\circ}\cap T(\bar x,\mathcal C)\right) \times \left(T(\bar x,\Theta)\cap\{\bar x^*\}^{\perp}\right).
$$
For any $(x,x^*)\in \gph \mathcal N\cap(\mathcal C\times\R^n)$ with $x$ closed to $\bar x$ enough, we have $z\in \Theta\cap\mathcal C$ and $x^*\in N(x,\Theta).$ Since $\Theta$ is a convex polyhedron, we can assume that  $N(x,\Theta)\subset N(\bar x,\Theta).$ Moreover, from the fact that $\Theta\cap\mathcal C$ is a convex polyhedron, $N(\bar x,\Theta\cap\mathcal C)$ is also. Therefore,
\begin{align*}
T(\bar x,\Theta\cap\mathcal C)\cap\{\bar x^*\}^{\perp}&=(N(\bar x,\Theta\cap\mathcal C))^{\circ}\cap\{\bar x^*\}^{\perp}\\
&=N(\bar x^*,N(\bar x,\Theta\cap\mathcal C))
\end{align*}
which implies that
\begin{align*}
(T(\bar x,\Theta\cap\mathcal C)\cap\{\bar x^*\}^{\perp})^{\circ}&= T(\bar x^*,N(\bar x,\Theta\cap\mathcal C))\\
&=\{t(\tilde x^*-\bar x^*)\mid \tilde x^*\in N(\bar x,\Theta\cap\mathcal C)\}.
\end{align*}
To prove $(u,v)\in \hat N((\bar x,\bar x^*),\gph \mathcal N\cap(\mathcal C\times\R^n)),$ it is sufficient to show that
\begin{equation}
\label{thm-eq00}
\langle(u,v), (w,z)\rangle\le 0\; \forall (w,z)\in T((\bar x,\bar x^*),\gph \mathcal N\times(\mathcal C\times\R^n)).
\end{equation}
Pick  $(w,z)\in T((\bar x,\bar x^*),\gph \mathcal N\times(\mathcal C\times\R^n)).$ By the definition, we find sequences $t_k\downarrow 0, (w_k,z_k)\to (w,z)$ such that
\begin{equation}\label{thm-eq01}
\begin{cases}
\bar x+t_kw_k\in\mathcal C\cap\Theta\\
\bar x^*+t_kz_k\in N(\bar x+t_kw_k,\Theta).
\end{cases}
\end{equation}
Since $u\in \left(T(\bar x,\Theta\cap\mathcal C)\cap\{\bar x^*\}^{\perp}\right)^{\circ},$ there exist $\lambda\ge 0$ and $\tilde x^*\in N(\bar x,\Theta\cap\mathcal C)$ such that $u=\lambda(\tilde x^*- \bar x^*).$ If $\lambda=0$, then $u=0$. It follows that  $\langle u,w_k\rangle=0.$ If $\lambda>0$, then $\bar x^*+\frac{1}{\lambda}u\in N(\bar x,\Theta\cap\mathcal C)$, which yields that
$$
0\ge \langle \bar x^*+\frac{1}{\lambda}u, \bar x+t_kw_k-\bar x\rangle = t_k\langle\bar x^*,w_k\rangle+\frac{t_k}{\lambda}\langle u,w_k \rangle.
$$
Thus
\begin{equation}
\label{thm-eq02}
\langle u,w_k\rangle\le -\lambda\langle \bar x^*, w_k\rangle.
\end{equation}
On the other hand, from \eqref{thm-eq01}, we can see that $\langle \bar x^*+t_kz_k,\bar x-\bar x-t_kw_k \rangle \le 0,$
which is equivalent to $-\langle \bar x^*, w_k\rangle\le -t_k\langle z_k,w_k\rangle.$ Taking \eqref{thm-eq02} into account and passing to the limit as $k\to\infty$, we achieve $\langle u,w \rangle\le 0.$ It remains to prove $\langle v,z\rangle\le 0.$ Since $v\in T(\bar x,\Theta)\cap\{\bar x^*\}^{\perp}$ and $\bar x^*+t_kz_k\in N(\bar x+t_kw_k,\Theta)\subset N(\bar x,\Theta)$ for $k$ large enough, we have
$$
0\ge \langle \bar x^*+t_kz_k, v\rangle = \langle\bar x^*, v\rangle+t_k\langle v,z_k\rangle=t_k\langle v,z_k\rangle,
$$
which  implies that $\langle v,z_k\rangle\le 0$. Letting $k\to \infty$, we obtain $\langle v,z\rangle\le 0.$ Thus,  inequality \eqref{thm-eq00} holds, so $(u,v)\in \hat N((\bar x,\bar x^*),\gph \mathcal N\cap(\mathcal C\times\R^n)).$
Moreover, it implies from $u\in T(\bar x,\mathcal C)$ and the convex polyhedral property of $\mathcal C$ that
$$
(u,v)\in T((\bar x,\bar x^*),\mathcal C\times\R^n)=\mathcal R((\bar x,\bar x^*),\mathcal C\times \R^n),
$$
which derives that $(u,v)\in \hat N_{\mathcal C\times\R^n}((\bar x,\bar x^*),\gph \mathcal N).$
This tells us that   opposite inclusion \eqref{Fnormal-normalcone1} holds.

Next, we prove \eqref{Fnormal-normalcone2}. Indeed, it is easy to see that
$$
T(\bar x,\Theta)\cap\{\bar x^*\}^{\perp} = \left\{ v\in \R^n\mid \langle a_i, v\rangle\le 0\; \forall i\in \bar I_1\setminus\tilde I \text{ \rm and } \langle a_i, v\rangle =0\; \forall i\in \tilde I\right\}=B_{\bar I_1,\mathcal I},
$$
while by the similar arguments, we obtain
$$
T(\bar x,\Theta\cap\mathcal C)\cap\{\bar x^*\}^{\perp} = \left\{ v\in \R^n\mid \langle a_i, v\rangle\le 0\; \forall i\in \bar I\setminus\tilde I \text{ \rm and } \langle a_i, v\rangle =0\; \forall i\in \tilde I\right\}=B_{\bar I,\mathcal I}.
$$
From \eqref{AB}, we assert that $(T(\bar x,\Theta\cap\mathcal C)\cap\{\bar x^*\}^{\perp})^{\circ} = A_{\bar I, \mathcal I}.$ Taking \eqref{Fnormal-normalcone1} and the fact that $\mathcal T_{\bar I_2}=\mathcal R(\bar x,\mathcal C)$ into account, we obtain  relation~\eqref{Fnormal-normalcone2} immediately.
\end{proof}

In what follow, we   establish the formula for the limiting normal cone with respect to a set of the graphic of normal cone mapping to $\Theta$. To do this, we need to present some notations as follows:
let $Q\subset I$ and $(\bar x,\bar x^*)\in \gph \mathcal N$ with $\bar x\in \mathcal C.$ We denote
\begin{equation} \label{Qi}
    Q\vert_{I_j}:=\left\{ i\in Q\mid i\in I_j\right\}\; \text{ \rm for } j=1,2,
\end{equation}
\begin{equation}\label{Cq}
C_Q:=\left\{x\in\mathbb R^n\mid \langle a_i,x\rangle=0\; \forall i\in Q, \text{ \rm and } \langle a_i,x\rangle<0\; \forall i\in I\setminus Q\right\},
\end{equation}
\begin{equation}\label{barCq}
\bar C_Q:=\left\{x\in\mathbb R^n\mid \langle a_i,x\rangle=0\; \forall i\in Q, \text{ \rm and } \langle a_i,x\rangle\le 0\; \forall i\in I\setminus Q\right\},
\end{equation}
and
\begin{equation}\label{mathcalP}
    \mathcal P(\bar x,\bar x^*):=\left\{ P\subset I_1(\bar x)\mid \bar x^*\in \cone\{a_i\mid i\in P\} \right\}.
\end{equation}

\begin{theorem}\label{LNCFormula}
Let $\Omega$ and $\mathcal C$ be respectively given by \eqref{Theta} and \eqref{mathcalC}. Let $\bar x\in \mathcal C$ and $(\bar x,\bar x^*)\in \gph \mathcal N$. Then the limiting normal cone with respect to $\mathcal C\times\R^n$ of $\gph \mathcal N$ at $(\bar x,\bar x^*)$ is given by
\begin{align}\label{limiting-normalcone}
N_{\mathcal C\times\R^n}((\bar x,\bar x^*),\gph \mathcal N)
&=\bigcup_{\substack{Q\subset \bar I, C_{Q}\ne\emptyset\\ Q\vert_{I_1}\supset P\in \mathcal P}}(A_{Q, P}\cap \mathcal T_{Q\vert_{I_2}})\times B_{Q\vert_{I_1},P},
\end{align}
where $Q\vert_{I_i},$ for $i=1,2$, $C_Q$ ,and $\mathcal P:=\mathcal P(\bar x,\bar x^*)$ are given by \eqref{Qi}, \eqref{Cq}, and \eqref{mathcalP} respectively, and $\bar I:=I(\bar x)$ with $I(\bar x)$ as in \eqref{I}.
\end{theorem}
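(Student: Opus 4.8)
The plan is to derive \eqref{limiting-normalcone} from Theorem~\ref{FNCFormula} by unfolding the definition of the limiting normal cone with respect to a set (note $\mathcal C\times\R^n$ is convex and closed and $\gph\mathcal N$ is locally closed):
$$
N_{\mathcal C\times\R^n}((\bar x,\bar x^*),\gph\mathcal N)=\Limsup_{(x,x^*)\xrightarrow{\tgph\mathcal N\cap(\mathcal C\times\R^n)}(\bar x,\bar x^*)}\hat N_{\mathcal C\times\R^n}((x,x^*),\gph\mathcal N).
$$
By Theorem~\ref{FNCFormula}, for each such $(x,x^*)$ the Fr\'echet cone equals $S_{\tau(x,x^*)}:=(A_{I(x),\mathcal I(x,x^*)}\cap\mathcal T_{I_2(x)})\times B_{I_1(x),\mathcal I(x,x^*)}$, where the ``type'' $\tau(x,x^*):=(I_1(x),I_2(x),\mathcal I(x,x^*))$ ranges over a \emph{finite} list of triples of index sets and each $S_\tau$ is closed. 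A routine subsequence argument then shows that $N_{\mathcal C\times\R^n}((\bar x,\bar x^*),\gph\mathcal N)$ is exactly the union of those $S_\tau$ whose type is realized by some sequence $(x_k,x_k^*)\to(\bar x,\bar x^*)$ inside $\gph\mathcal N\cap(\mathcal C\times\R^n)$. So everything reduces to identifying the realizable types and checking that the resulting union equals the right-hand side of \eqref{limiting-normalcone}.

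For ``$\supseteq$'' I would fix $Q\subseteq\bar I$ with $C_Q\ne\emptyset$ and $P$ with $Q\vert_{I_1}\supseteq P\in\mathcal P(\bar x,\bar x^*)$, and build an explicit recovery sequence. Pick $x^0\in C_Q$ and set $x_k:=\bar x+t_kx^0$ with $t_k\downarrow0$; writing $\bar x^*=\sum_{i\in P}\lambda_ia_i$ with $\lambda_i\ge0$ (possible since $P\in\mathcal P$), set $x_k^*:=\sum_{i\in P}(\lambda_i+t_k)a_i$. Using that the right-hand sides of the constraints defining $\Theta$ and $\mathcal C$ vanish precisely on $\bar I$ (cf.\ \eqref{I}) together with $x^0\in C_Q$, one checks $x_k\in\Theta\cap\mathcal C$ with $I(x_k)=Q$, hence $I_1(x_k)=Q\vert_{I_1}$, $I_2(x_k)=Q\vert_{I_2}$; moreover $x_k^*\in\cone\{a_i:i\in P\}\subseteq N(x_k,\Theta)$ with all coefficients positive, so $\mathcal I(x_k,x_k^*)$ may be taken to be $P$. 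Thus $\tau(x_k,x_k^*)\equiv(Q\vert_{I_1},Q\vert_{I_2},P)$, $(x_k,x_k^*)\to(\bar x,\bar x^*)$, and $S_{\tau(x_k,x_k^*)}=(A_{Q,P}\cap\mathcal T_{Q\vert_{I_2}})\times B_{Q\vert_{I_1},P}$ lies in the limiting normal cone.

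For ``$\subseteq$'' I would take a realizing sequence $(x_k,x_k^*)\to(\bar x,\bar x^*)$ in $\gph\mathcal N\cap(\mathcal C\times\R^n)$ and $(u_k,v_k)\to(u,v)$ with $(u_k,v_k)\in S_{\tau(x_k,x_k^*)}$, and pass to a subsequence with constant type $\tau(x_k,x_k^*)\equiv(Q_1,Q_2,P)$; put $Q:=Q_1\cup Q_2$. Then $Q\subseteq\bar I$ since the constraints inactive at $\bar x$ remain inactive along $x_k\to\bar x$, and $C_Q\ne\emptyset$ since $x_k$ itself lies in $C_Q$ for large $k$ (its active set is exactly $Q$ and it lies in $\Theta\cap\mathcal C$). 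It remains to produce $P'\subseteq Q_1$ with $P'\in\mathcal P(\bar x,\bar x^*)$ and $S_{(Q_1,Q_2,P)}\subseteq S_{(Q_1,Q_2,P')}$: writing $x_k^*=\sum_{i\in P}\lambda_i^ka_i$ with $\lambda_i^k>0$, if the multipliers stay bounded then $\bar x^*\in\cone\{a_i:i\in P\}$ and $P'=P$ works; if unbounded, a homogeneity argument yields a nontrivial relation $\sum_{i\in P}\mu_ia_i=0$ with $\mu\ge0$, $\mu\ne0$, hence an index $i^\ast$ with $\mu_{i^\ast}>0$ and $a_{i^\ast}\in\mathrm{span}\{a_i:i\in P\setminus\{i^\ast\}\}$; deleting $i^\ast$ keeps all conic representations of $x_k^*$, and one verifies $A_{Q,P}\subseteq A_{Q,P\setminus\{i^\ast\}}$ and $B_{Q_1,P}\subseteq B_{Q_1,P\setminus\{i^\ast\}}$, so $S_{(Q_1,Q_2,P)}\subseteq S_{(Q_1,Q_2,P\setminus\{i^\ast\})}$; iterating until the multipliers become bounded gives $P'$. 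Then $(u,v)=\lim(u_k,v_k)\in S_{(Q_1,Q_2,P)}\subseteq S_{(Q_1,Q_2,P')}$, a term of the right-hand side of \eqref{limiting-normalcone}.

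The hard part is the last step of ``$\subseteq$'': controlling the multiplier vectors representing $x_k^*$, showing that an unbounded family forces the support $P$ to be reducible to a member of $\mathcal P(\bar x,\bar x^*)$, and — the delicate algebraic point — establishing the monotonicity $A_{Q,P}\subseteq A_{Q,P\setminus\{i^\ast\}}$ under such a reduction (the corresponding inclusion for the $B$-sets is immediate). A secondary nuisance is that $\mathcal I(\cdot,\cdot)$ depends on the chosen conic representation; this is harmless since the Fr\'echet cone $S_{\tau(x,x^*)}$ delivered by Theorem~\ref{FNCFormula} is a geometric object independent of that choice, so in ``$\supseteq$'' any convenient positive representation with support $P$ may be used, and the reduction in ``$\subseteq$'' is insensitive to it.
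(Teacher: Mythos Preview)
Your overall architecture matches the paper's: unfold the $\Limsup$ definition, invoke Theorem~\ref{FNCFormula} at nearby points, exploit that only finitely many ``types'' $(I_1(x),I_2(x),\mathcal I(x,x^*))$ occur and that each $S_\tau$ is closed, and build an explicit recovery sequence for ``$\supseteq$''. Your construction for ``$\supseteq$'' is essentially identical to the paper's (the paper takes the convex combination $x_k=k^{-1}\tilde x+(1-k^{-1})\bar x$ with $\tilde x\in C_Q$ and the same perturbed multiplier vector $x_k^*=\sum_{i\in P}(\lambda_i+k^{-1})a_i$).

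Where you diverge is in the ``$\subseteq$'' direction, and here you make the argument considerably harder than it needs to be. The step you flag as ``the hard part'' --- controlling possibly unbounded multipliers, extracting a positive dependence relation, deleting an index $i^\ast$, and checking the monotonicity $A_{Q,P}\subseteq A_{Q,P\setminus\{i^\ast\}}$ --- is entirely avoidable. Once you have passed to a subsequence with constant support $P=\mathcal I(x_{k_s},x_{k_s}^*)\subseteq Q\vert_{I_1}$, you know $x_{k_s}^*\in\cone\{a_i:i\in P\}$ for every $s$. Since a finitely generated cone is closed, $x_{k_s}^*\to\bar x^*$ gives $\bar x^*\in\cone\{a_i:i\in P\}$ directly, i.e.\ $P\in\mathcal P(\bar x,\bar x^*)$, with no reduction $P\leadsto P'$ required. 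This is exactly what the paper does, in one line.

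Your reduction route can be made to work, but as written it has a gap: the index $i^\ast$ realizing the minimum ratio $\lambda_i^k/\mu_i$ may vary with $k$, so ``deleting $i^\ast$ keeps all conic representations of $x_k^*$'' is not justified for a single $i^\ast$ independent of $k$. One would need a further subsequence (some index attains the minimum infinitely often), then repeat --- doable, but unnecessary given the closedness argument above. The monotonicity $A_{Q,P}\subseteq A_{Q,P\setminus\{i^\ast\}}$ you worry about is indeed correct when $a_{i^\ast}\in\mathrm{span}\{a_i:i\in P\setminus\{i^\ast\}\}$ (the span part is unchanged and the cone part only grows), but you never need it.
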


\begin{proof}
We first demonstrate the inclusion ``$\subset$''. Taking $(u,v)\in N_{\mathcal C\times\R^n}((\bar x,\bar x^*),\gph \mathcal N),$ one sees that there exist by the definition sequences $(x_k,x_k^*)\xrightarrow{\tgph \mathcal N\cap\mathcal C\times\R^n}(\bar x,\bar x^*)$ and $(u_k,v_k)\to (u,v)$ such that
$$
(u_k,v_k)\in \hat N_{\mathcal C\times\R^n}((x_k,x_k^*),\gph \mathcal N)\; \text{ \rm for all } k\in \N.
$$
It follows that $x_k\in \Theta\cap\mathcal C$ and $x_k^*\in N(x_k,\Theta)$ for all $k\in \N.$ Since $x_k\to \bar x$ and $I$ is finite, we can assume that $I(x_k)\subset I(\bar x)$ for all $k\in\N.$ By the finiteness of $I(\bar x)$, we find that there exist a subset $Q$ of $I(\bar x)$ and a subsequence $(x_{k_s})$ of $(x_k)$ such that $I(x_{k_s}) = Q$ for all $x_{k_s}\in \N.$ This implies that
$$
\langle a_i, x_{k_s}\rangle=0\; \forall i\in Q \text{ \rm and } \langle a_i, x_{ks}\rangle<0\; \forall i\in I\setminus Q,
$$
which gives us that $C_Q\ne \emptyset.$ Using \eqref{normalcone}, we find for each $x_{k_s}^*$ numbers $\lambda_{ik_s}\ge 0$ for all $i\in Q\vert_{I_1}$ such that $x_{k_s}^*=\sum_{i\in Q\vert_{I_1}} \lambda_{ik_s}a_i.$ For each $x_{k_s}^*,$ we define the set
$$
P(x_{k_s}^*):=\left\{i\in Q\vert_{I_1}\mid\lambda_{ik_s}>0\right\}.
$$
Then $P(x_{k_s}^*)$ is finite for any $k_s$. Moreover,   $Q\vert_{I_i}=Q\cap I_i=I(x_k)\cap I_i=I_i(x_k)$ for $i=1,2.$ Without loss of the generality (take a subsequence if necessary), we can assume that $P(x_{k_s})=P$ for some $P\subset Q\vert_{I_1}=I_1(x_{k_s})$ for all $x_{k_s}.$ It is not difficult to see that
$$
x_{k_s}^*=\sum_{i\in P}\lambda_{ik_s}a_i\in \cone\{a_i\mid i\in P\},
$$
so $\bar x^*\in \cone\{a_i\mid i\in P\}$ due to the closedness of  finitely generated cones and the fact that $x_{k_s}^*\to \bar x^*$ as $s\to\infty.$ This follows that $P\in\mathcal P$, which implies that $Q\vert_{I_1}\supset P\in \mathcal P$. We now use the relation~\eqref{Fnormal-normalcone2} in Theorem~\ref{FNCFormula} to obtain
\begin{align*}
(u_{k_s},v_{k_s})\in \hat N_{\mathcal C\times\R^n}((x_{k_s},x_{k_s}^*),\gph \mathcal N)&=A_{I(x_{k_s}), \mathcal I(x_{k_s},x_{k_s}^*)}\cap\mathcal T_{I_2(x_{k_s})}\times B_{I_1(x_{k_s}),\mathcal I(x_{k_s},x_{k_s}^*)}
\\
&= A_{Q,P}\cap \mathcal T_{Q\vert_{I_2}}\times B_{Q\vert_{I_1},P}.
\end{align*}
Passing to the limit as $s\to \infty$ and using the closedness of $A_{Q,P}, \mathcal T_{Q\vert_{I_2}}, B_{Q\vert_{I_1},P},$ we have
$$
(u,v)\in A_{Q,P}\cap \mathcal T_{Q\vert_{I_2}}\times B_{Q\vert_{I_1},P}
$$
and hence the inclusion ``$\subset$'' in \eqref{limiting-normalcone} holds.

We next prove the opposite inclusion in \eqref{limiting-normalcone}. Let $u\in A_{Q,P}\cap \mathcal T_{Q\vert_{I_2}}$ and $v\in B_{Q\vert_{I_1},P}$ with $Q\subset \bar I$, $C_Q\ne \emptyset,$ $Q\vert_{I_1}\supset P\in \mathcal P.$  Take $\tilde x\in C_Q$ and construct a sequence
\begin{equation*}\label{xk}
    x_k:=k^{-1}\tilde x+(1-k^{-1})\bar x\to \bar x \text{ \rm as } k\to \infty.
\end{equation*}
For each $i\in Q$ and $j\in I\setminus Q,$ we have
\begin{equation}\label{xkpro}
    \langle a_i,x_k\rangle =  k^{-1}\langle a_i, \tilde x \rangle+(1-k^{-1})\langle a_i, \bar x\rangle= 0
\end{equation}
due to $\tilde x\in C_Q$, and $\langle a_j, x_k \rangle =  k^{-1}\langle a_j, \tilde x \rangle+(1-k^{-1})\langle a_j, \bar x\rangle< 0,$
which implies that $x_k\in \Theta\cap\mathcal C$ and $Q\vert_{I_1}=I_1(x_k)$ for all $k\in \N.$ Using \eqref{normalcone}, we derive
\begin{equation*}\label{normalcone2}
    N(x_k,\Theta)=\cone\left\{ a_i\mid i\in Q\vert_{I_1}\right\}\; \forall k\in \N.
\end{equation*}
Since $P\in \mathcal P$, there exist $\lambda_i\ge 0$ such that  $\bar x^*=\sum_{i\in P}\lambda_i a_i.$  For each $k\in \N,$ we construct the sequence $x_k^*:=\sum_{i\in P}(\lambda_i+k^{-1})a_i.$ Then $x_k^*\to x^*$ and $x_k^*\in N(x_k,\Theta)$ for all $k\in \N.$ Moreover, we have, from the definitions of $P$ and $x_k^*,$ that $P=\mathcal I(x_k,x_k^*)$ for all $k\in\N$ with $\mathcal I(x_k,x_k^*)$ defined by \eqref{tildeI}. On the other hand, it is easy to see from \eqref{xkpro} that $Q\vert_{I_2}= I_2(x_k).$ By the facts that  $(u,v)\in A_{Q,P}\cap \mathcal T_{Q\vert_{I_2}}\times B_{Q\vert_{I_1}, P}$, we obtain
$$
(u,v)\in A_{I(x_k),\mathcal I(x_k,x_k^*)}\cap \mathcal T_{I_{2}(x_k)}\times B_{I_1(x_k),\mathcal I(x_k,x_k^*)},
$$
which gives us that $(u,v)\in \hat N_{\mathcal C\times\R^n}((x_k,x_k^*),\gph \mathcal N)$ for all $k\in \N$, which is due to Theorem~\ref{FNCFormula}. Passing to the limit as $k\to \infty,$ we have $(u,v)\in N_{\mathcal C\times \R^n}((\bar x,\bar x^*),\gph \mathcal N).$
Therefore,
$$
A_{Q,P}\cap \mathcal T_{Q\vert_{I_2}}\times B_{Q\vert_{I_1}, P}\subset N_{\mathcal C\times \R^n}((\bar x,\bar x^*),\gph \mathcal N).
$$
This demonstrates that the inclusion ``$\supset$'' in \eqref{limiting-normalcone} is fulfilled.
Hence we obtain  relation \eqref{limiting-normalcone} and the proof of this theorem is completed.
\end{proof}

In the case that  the generating system $\{a_i\mid i\in I(\bar x)\}$ is linear independent, the simple formula of the limiting normal cone with respect to $\mathcal C\times\R^n$ of $\gph \mathcal N$ at $(\bar x,\bar x^*)$ is provided in the following theorem.

\begin{theorem}\label{LNCFormula1}
Let $\bar x\in \mathcal C$, and let $(\bar x,\bar x^*)\in \gph \mathcal N$ in the framework of Theorem~\ref{LNCFormula}. Assume that the generating elements $\{a_i\mid i\in I(\bar x)\}$ are linear independent.
Then the limiting normal cone with respect to $\mathcal C\times\R^n$ of $\gph \mathcal N$ at $(\bar x,\bar x^*)$ is computed by
\begin{align}\label{limiting-normalcone1}
N_{\mathcal C\times\R^n}((\bar x,\bar x^*),\gph \mathcal N)
&=\bigcup_{\substack{Q\subset \bar I\\
\tilde I\subset P\subset Q\vert_{I_1}}}(A_{Q, P}\cap \mathcal T_{Q\vert_{I_2}})\times B_{Q\vert_{I_1},P},
\end{align}
where $\tilde I:=\mathcal I(\bar x,\bar x^*)$ is given by \eqref{tildeI}.
\end{theorem}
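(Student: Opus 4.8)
The plan is to deduce \eqref{limiting-normalcone1} directly from the general formula \eqref{limiting-normalcone} of Theorem~\ref{LNCFormula} by showing that, under the linear independence hypothesis, the set of pairs $(Q,P)$ indexing the union in \eqref{limiting-normalcone} coincides with the one indexing \eqref{limiting-normalcone1}. Since the summand $(A_{Q,P}\cap\mathcal T_{Q\vert_{I_2}})\times B_{Q\vert_{I_1},P}$ is the same in both formulas, it suffices to establish: (i) $C_Q\ne\emptyset$ for \emph{every} $Q\subset\bar I$, so the constraint ``$C_Q\ne\emptyset$'' is vacuous; and (ii) for every $P\subset I_1(\bar x)$ one has $P\in\mathcal P(\bar x,\bar x^*)$ if and only if $\tilde I\subset P$, so that ``$Q\vert_{I_1}\supset P\in\mathcal P$'' is precisely ``$\tilde I\subset P\subset Q\vert_{I_1}$'' (note $Q\vert_{I_1}=Q\cap I_1\subset I(\bar x)\cap I_1=I_1(\bar x)$, so (ii) applies to all admissible $P$).

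For (i), fix $Q\subset\bar I$. As $\{a_i\mid i\in\bar I\}$ is linearly independent, the linear map $w\mapsto(\langle a_i,w\rangle)_{i\in\bar I}$ is onto $\R^{\bar I}$, so there is $w\in\R^n$ with $\langle a_i,w\rangle=0$ for $i\in Q$ and $\langle a_i,w\rangle=-1$ for $i\in\bar I\setminus Q$. Set $\tilde x:=\bar x+tw$. Using $\langle a_i,\bar x\rangle=0$ for $i\in\bar I$ and $\langle a_i,\bar x\rangle<0$ for $i\in I\setminus\bar I$, one checks that for $t>0$ small enough $\langle a_i,\tilde x\rangle=0$ on $Q$ and $\langle a_i,\tilde x\rangle<0$ on $I\setminus Q$; hence $\tilde x\in C_Q$.

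For (ii), first observe that linear independence forces uniqueness of the conic representation of $\bar x^*\in N(\bar x,\Theta)=\cone\{a_i\mid i\in I_1(\bar x)\}$: there are unique scalars $c_i$, $i\in I_1(\bar x)$, with $\bar x^*=\sum_{i\in I_1(\bar x)}c_ia_i$, necessarily $c_i\ge0$, and $\tilde I=\mathcal I(\bar x,\bar x^*)=\{i\in I_1(\bar x)\mid c_i>0\}$ is well defined (independent of any chosen representation). If $\tilde I\subset P$, then $\bar x^*=\sum_{i\in\tilde I}c_ia_i\in\cone\{a_i\mid i\in P\}$, so $P\in\mathcal P$. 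Conversely, if $P\in\mathcal P$, write $\bar x^*=\sum_{i\in P}\mu_ia_i$ with $\mu_i\ge0$; comparing with the unique representation gives $c_i=\mu_i$ for $i\in P$ and $c_i=0$ for $i\in I_1(\bar x)\setminus P$, so every index with $c_i>0$ belongs to $P$, i.e.\ $\tilde I\subset P$. Combining (i) and (ii) identifies the two index sets, and \eqref{limiting-normalcone1} follows from \eqref{limiting-normalcone}.

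I expect the only delicate point to be (i) — keeping the distinction between $\bar I$ and $I$ straight and choosing $t$ small enough that the strictly inactive constraints indexed by $I\setminus\bar I$ stay strict — together with the remark that it is exactly linear independence that makes $\mathcal I(\bar x,\bar x^*)$ well defined (in the setting of Theorem~\ref{LNCFormula} the set $\tilde I$ a priori depends on the chosen representation of $\bar x^*$). A more self-contained alternative would be to re-run the two inclusions in the proof of Theorem~\ref{LNCFormula} under the extra hypothesis, simplifying the sets $P(x_{k_s}^*)$ and the construction of $x_k^*$ via uniqueness of the multipliers; but the reduction above is shorter and pinpoints where independence is used.
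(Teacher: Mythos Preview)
Your proof is correct and follows essentially the same approach as the paper's: both arguments reduce \eqref{limiting-normalcone1} to \eqref{limiting-normalcone} by (i) using linear independence of $\{a_i\mid i\in\bar I\}$ to solve the system $\langle a_i,\cdot\rangle=0$ on $Q$ and $\langle a_i,\cdot\rangle=-1$ on $\bar I\setminus Q$, then perturbing $\bar x$ to land in $C_Q$, and (ii) exploiting uniqueness of the representation of $\bar x^*$ in $\cone\{a_i\mid i\in I_1(\bar x)\}$ to show $P\in\mathcal P(\bar x,\bar x^*)\Leftrightarrow\tilde I\subset P$. Your added remark that linear independence is precisely what makes $\tilde I=\mathcal I(\bar x,\bar x^*)$ well defined (independent of the chosen representation of $\bar x^*$) is a useful clarification that the paper leaves implicit.
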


\begin{proof}
For any $Q\subset I(\bar x),$ we consider the following equations system
\begin{equation}\label{eqs}
\langle a_i, x\rangle=0\; \text{ \rm for all } i\in Q \; \text{ \rm and } \langle a_i,x \rangle=-1\; \text{ \rm for all }  i\in I(\bar x)\setminus Q.
\end{equation}
Since $\{a_i\mid i\in I(\bar x)\}$ are linear dependent,  system~\eqref{eqs} has a solution $\tilde x$, which satisfies $\bar x+t\tilde x\in \Theta$ and $I(\bar x+t\tilde x)=Q$ for  $t>0$ sufficiently small. Thus we have $\bar x+t\tilde x\in C_Q$ and then $C_Q\ne \emptyset.$ To obtain \eqref{limiting-normalcone1}, it remains to demonstrate that
\begin{equation}\label{Pcond}
P\in \mathcal P(\bar x,\bar x^*) \Leftrightarrow \tilde I\subset P.
\end{equation}
If $\bar x^*=0$, then $\tilde I =\emptyset$, so \eqref{Pcond} holds. We consider the case that $\bar x^*\ne 0.$ It is easy to see that the implication ``$\Leftarrow$'' trivially holds. We only need to prove the necessary condition of \eqref{Pcond}. Indeed, let $P\in \mathcal P(\bar x,\bar x^*).$ We find $\lambda_i\ge 0$ for all $i\in P$ such that $\bar x^*=\sum_{i\in P}\lambda_ia_i.$

On the other hand, there exist by \eqref{tildeI} $\nu_j>0$ for all $j\in \tilde I$ satisfying $\bar x^*=\sum_{j\in \tilde I}\nu_ja_j.$ We now define nonnegative numbers by
$$
\alpha_i:=\begin{cases}
\lambda_i& \textbf{ \rm if } i\in P\\
0&\text{ \rm if } i\in \bar I\setminus P,
\end{cases} \quad \text{ \rm and } \beta_i:=\begin{cases}
\nu_i& \textbf{ \rm if } i\in \tilde I\\
0&\text{ \rm if } i\in \bar I\setminus \tilde I.
\end{cases}
$$
Then we have $\bar x^*=\sum_{i\in \bar I}\alpha_ia_i = \sum_{i\in \bar I}\beta_ia_i.$
Since $\{a_i\mid i\in \bar I \}$ are linear independent, we achieve $\alpha_i=\beta_i$  for all $i\in \bar I,$ which means that
$$
\lambda_i= \nu_i\; \text{ \rm for all } i\in \tilde I, \text{ \rm and } \lambda_i=0\; \text{ \rm for all } i\in \bar I\setminus \tilde I.
$$
For any $i\in \tilde I,$ we obtain  $\lambda_i=\nu_i>0$, which gives us that $i\in P.$ Thus $\tilde I\subset P$
and relation \eqref{Pcond} holds.
\end{proof}

\begin{remark}
{\rm If $\bar x\in {\rm int}\,\mathcal C$, then $I_2(\bar x)=\emptyset$, which implies that $\mathcal T_{Q\vert_{I_2}} = (\cone\{\emptyset\})^{\circ} = \R^n.$ Thus Theorems~\ref{FNCFormula}, \ref{LNCFormula}, and \ref{LNCFormula1} reduce to \cite[Theorems~3.4, 4.1 and 4.2]{HMN2010},  respectively.}
\end{remark}

\section{Formulas of the corderivatives with respect to a set of the normal cone mapping}\label{coderivative}
In this section, by using the obtained formulas of the limiting normal cone with respect to $\mathcal C\times\R^n$ of $\gph \mathcal N$ in Section~\ref{normalcones}, we provide formulas of the limiting coderivative with respect to $\mathcal C$ of the normal cone mapping $\mathcal N.$ The following theorem is directly proved from Definition~\ref{coderivative-wrt}, Theorems~\ref{LNCFormula}, and~\ref{LNCFormula1}. For simplicity, we denote $\bar I:=I(\bar x)$ and $\bar I_i:=I_i(\bar x)$ for $i=1,2$ and $\bar x\in \Theta.$

\begin{theorem}\label{LCFormula1}
Let $\bar x\in \mathcal C$, and let $(\bar x,\bar x^*)\in \gph \mathcal N$ in the framework of Theorem~\ref{LNCFormula}. Then the limiting coderivative with respect to $\mathcal C$ of $\gph \mathcal N$ at $(\bar x,\bar x^*)$ is given by
\begin{align}\label{Lcoderivative-normalcone}
D^*_{\mathcal C}\mathcal N(\bar x,\bar x^*)(u) = \Big\{v\mid  (v,-u)\in &\,  A_{Q, P}\cap \mathcal T_{Q\vert_{I_2}}\times B_{Q\vert_{I_1},P}  \text{ \rm for some } Q\subset\bar I, P\in \mathcal P(\bar x,\bar x^*)\nonumber\\
& \text{ \rm with } C_Q\ne \emptyset, P\subset Q\vert_{I_1}\Big\}.
\end{align}
If, in addition, the generating elements $\{a_i\mid i\in I(\bar x)\}$ are linear independent, then
\begin{align}\label{Lcoderivative-normalcone1}
D^*_{\mathcal C}\mathcal N(\bar x,\bar x^*)(u) = \Big\{v\mid  (v,-u)\in &\,  A_{Q, P}\cap \mathcal T_{Q\vert_{I_2}}\times B_{Q\vert_{I_1},P}  \text{ \rm for some } Q\subset\bar I,\nonumber\\
& \text{ \rm and } \mathcal I(\bar x,\bar x^*)\subset P\subset Q\vert_{I_1}\Big\}.
\end{align}
\end{theorem}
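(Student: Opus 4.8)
The plan is to derive both identities by simply unwinding Definition~\ref{coderivative-wrt} and then inserting, verbatim, the descriptions of the limiting normal cone already established in Theorems~\ref{LNCFormula} and~\ref{LNCFormula1}; no new limiting argument or estimate is needed, and this is precisely the ``direct'' route announced above.

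First I would record the elementary reduction that makes Definition~\ref{coderivative-wrt} applicable. Since $\gph\mathcal N_{\mathcal C}=\gph\mathcal N\cap(\mathcal C\times\R^n)$ and, by the very form of \eqref{def-fre-normal-cone} and \eqref{limiting-normal-cone}, the Fr\'echet and limiting normal cones with respect to $\mathcal C\times\R^n$ at a point of $\mathcal C\times\R^n$ depend on the underlying set only through its intersection with $\mathcal C\times\R^n$, one has
\[
N_{\mathcal C\times\R^n}\big((\bar x,\bar x^*),\gph\mathcal N_{\mathcal C}\big)=N_{\mathcal C\times\R^n}\big((\bar x,\bar x^*),\gph\mathcal N\big).
\]
Hence Definition~\ref{coderivative-wrt} gives the equivalence $v\in D^*_{\mathcal C}\mathcal N(\bar x,\bar x^*)(u)$ if and only if $(v,-u)\in N_{\mathcal C\times\R^n}\big((\bar x,\bar x^*),\gph\mathcal N\big)$.

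Next I would substitute the right-hand side of \eqref{limiting-normalcone} into this equivalence. Since that right-hand side is a union, over the admissible pairs $(Q,P)$ with $Q\subset\bar I$, $C_Q\ne\emp$ and $Q\vert_{I_1}\supset P\in\mathcal P(\bar x,\bar x^*)$, of the products $(A_{Q,P}\cap\mathcal T_{Q\vert_{I_2}})\times B_{Q\vert_{I_1},P}$, the membership $(v,-u)\in N_{\mathcal C\times\R^n}((\bar x,\bar x^*),\gph\mathcal N)$ is equivalent to the existence of one such pair $(Q,P)$ with $v\in A_{Q,P}\cap\mathcal T_{Q\vert_{I_2}}$ and $-u\in B_{Q\vert_{I_1},P}$; this is exactly the set described in \eqref{Lcoderivative-normalcone}. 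Replacing \eqref{limiting-normalcone} by \eqref{limiting-normalcone1}, which holds when $\{a_i\mid i\in I(\bar x)\}$ is linearly independent and which trades the condition ``$C_Q\ne\emp$ and $P\in\mathcal P(\bar x,\bar x^*)$'' for ``$\mathcal I(\bar x,\bar x^*)\subset P\subset Q\vert_{I_1}$'', yields \eqref{Lcoderivative-normalcone1} by the identical transcription.

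The only step that genuinely deserves a line of justification is the passage from $\gph\mathcal N_{\mathcal C}$ to $\gph\mathcal N$; once that is in hand, the theorem is a pure restatement of Theorems~\ref{LNCFormula} and~\ref{LNCFormula1}, so there is no real obstacle. The single point to be careful about in the write-up is not to lose the side condition $C_Q\ne\emp$ (respectively, the automatic nonemptiness of $C_Q$ established inside the proof of Theorem~\ref{LNCFormula1}) when rewriting a union over $(Q,P)$ as an existential quantifier.
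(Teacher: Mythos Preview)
Your proposal is correct and follows exactly the route the paper takes: the paper itself states that the theorem ``is directly proved from Definition~\ref{coderivative-wrt}, Theorems~\ref{LNCFormula}, and~\ref{LNCFormula1}'' and gives no further argument. Your extra line justifying the passage from $\gph\mathcal N_{\mathcal C}$ to $\gph\mathcal N$ (via the fact that $N_{\mathcal C\times\R^n}$ depends on the set only through its intersection with $\mathcal C\times\R^n$) is a welcome clarification that the paper leaves implicit.
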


\begin{remark}
{\rm Theorem~\ref{LCFormula1} provides a computable formula of the limiting coderivative with respect to a set of the normal cone mapping $\mathcal N.$ However, formulas \eqref{Lcoderivative-normalcone} and \eqref{Lcoderivative-normalcone1} may not be so favorable to calculate. In what follow, we   establish more favorable calculation formulas.}
\end{remark}

Given $u\in \R^n$, we define the {\it characteristic active index subsets}
\begin{equation}\label{Iu}
    I_1(\bar x, u):=\left\{i\in I_1(\bar x)\mid \langle a_i, u\rangle = 0 \right\}\; \text{ \rm and } \; I_1^+(\bar x, u):=\left\{i\in I_1(\bar x)\mid \langle a_i, u\rangle>0\right\}.
\end{equation}
Let $T\subset I(\bar x).$ We denote
\begin{align}
    \Upsilon(T):=\big\{i\in I_1(\bar x)\mid \langle a_i,x\rangle =0 \text{ \rm for all } x \in \bar C_T \big\}.
    \label{upsilonT}
\end{align}

\begin{theorem}\label{DCNTheorem}
Let $\bar x\in \mathcal C$, and let $(\bar x,\bar x^*)\in \gph \mathcal N$ in the framework of Theorem~\ref{LNCFormula}. Give $u\in \R^n$ and let $I_1(\bar x, u), I_1^+(\bar x, u)$ be defined by \eqref{Iu}.  The following assertions hold:

    {\rm (i)} ${\rm Dom}\, D^*_{\mathcal C}\mathcal N(\bar x,\bar x^*)$ can be computed by
    \begin{align}
   \label{domD} {\rm Dom}\, D^*_{\mathcal C}\mathcal N(\bar x,\bar x^*)&=\left\{u\in \R^n\mid \langle a_i,u\rangle=0\; \forall i\in \tilde I,\; \langle a_i,u\rangle\ge 0\;\forall j\in \Upsilon(\tilde I)\setminus\tilde I\right\},\end{align} where $\tilde I:=\mathcal I(\bar x,\bar x^*)$ is defined by \eqref{tildeI}. Moreover, for all $u\in {\rm Dom}\, D^*_{\mathcal C}\mathcal N(\bar x,\bar x^*)$, we have
   \begin{align}
  \label{DCN}  D^*_{\mathcal C}\mathcal N(\bar x,\bar x^*)(u)&\subset \left(\cone\{a_i\mid i\in I_1^+(\bar x, u)\cup \bar I_2\} + {\rm span}\{a_i\mid i\in I_1(\bar x, u)\}\right)\cap\left(\bigcup\limits_{T\in \mathcal I_2}\mathcal T_{T}\right),
    \end{align}  where $\mathcal I_2:=\left\{T\subset \bar I_2\mid C_{I_1(\bar x,u)\cup I_1^+(\bar x,u)\cup T}\ne \emptyset\right\}.$

 {\rm (ii)}  If, in addition, the generating elements $\{a_i\mid i\in I(\bar x)\}$ are linear independent, then
    \begin{align}\label{DCNnew}
    &D^*_{\mathcal C}\mathcal N(\bar x,\bar x^*)(u)= \left(\cone\{a_i\mid i\in I_1^+(\bar x, u)\cup \bar I_2\} + {\rm span}\{a_i\mid i\in I_1(\bar x, u)\}\right)\cap\left(\bigcup\limits_{T\in \mathcal I_2}\mathcal T_{T}\right)
    \end{align}  for all $u\in {\rm Dom}\, D^*_{\mathcal C}\mathcal N(\bar x,\bar x^*)$, where ${\rm Dom}\, D^*_{\mathcal C}\mathcal N(\bar x,\bar x^*)$ is given by \eqref{domD}.
\end{theorem}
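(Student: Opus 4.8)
The plan is to derive Theorem~\ref{DCNTheorem} from the more primitive (but less tractable) formulas \eqref{Lcoderivative-normalcone} and \eqref{Lcoderivative-normalcone1} in Theorem~\ref{LCFormula1} by carefully analyzing, for a fixed $u \in \R^n$, which index sets $Q \subset \bar I$ and $P \in \mathcal P(\bar x,\bar x^*)$ with $P \subset Q\vert_{I_1}$ can actually contribute a vector $v$ with $(v,-u) \in (A_{Q,P}\cap\mathcal T_{Q\vert_{I_2}}) \times B_{Q\vert_{I_1},P}$. The membership $-u \in B_{Q\vert_{I_1},P}$ means $\langle a_i,u\rangle \ge 0$ for all $i \in Q\vert_{I_1}$ and $\langle a_i,u\rangle = 0$ for all $i \in P$; so a necessary condition for $u$ to lie in the domain is the existence of some admissible pair $(Q,P)$ respecting these sign constraints together with $C_Q \ne \emptyset$ and $\tilde I \subset P$ (the latter coming from $P \in \mathcal P$, via the argument already used in Theorem~\ref{LNCFormula1} in the independent case, and as a necessary consequence in the general case since $P \supset \tilde I$ always follows from $\bar x^* \in \cone\{a_i \mid i\in P\}$ whenever... — wait, this needs care: in the dependent case $P \in \mathcal P$ need not contain $\tilde I$, so for part (i) I only get the inclusion \eqref{DCN} and the \emph{superset} description of the domain is what I must check matches).

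First I would establish \eqref{domD}. For the forward inclusion: if $u \in {\rm Dom}\, D^*_{\mathcal C}\mathcal N(\bar x,\bar x^*)$ then some admissible $(Q,P)$ exists with $-u \in B_{Q\vert_{I_1},P}$; since $P \in \mathcal P$ forces $\bar x^* \in \cone\{a_i\mid i\in P\}$ and the minimal representing set $\tilde I$ satisfies $\langle a_i,u\rangle = 0$ on $P \supset$ (a representation of $\bar x^*$), I extract $\langle a_i,u\rangle = 0$ for $i \in \tilde I$ by observing $\langle \bar x^*,u\rangle = 0$ forces each $\nu_i\langle a_i,u\rangle = 0$ with $\nu_i>0$ on $\tilde I$ — but this requires knowing the $\langle a_i,u\rangle$ all have the right sign, which they do since $\tilde I \subset I_1(\bar x)$ and $-u \in B_{Q\vert_{I_1},P}$ only controls $i \in Q\vert_{I_1}$; I need $\tilde I \subset Q\vert_{I_1}$, which I would get by choosing $P$ appropriately or arguing it must hold. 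Then the $\Upsilon(\tilde I)$ condition: $\Upsilon(\tilde I)$ collects indices forced to vanish on all of $\bar C_{\tilde I}$, and since $C_Q \ne \emptyset$ with $Q \supset \tilde I$ picks out a point of $\bar C_{\tilde I}$, any $i \in \Upsilon(\tilde I) \setminus \tilde I$ lies in $Q$ (as $\langle a_i,\cdot\rangle$ vanishes there), hence $\langle a_i,u\rangle \ge 0$. For the reverse inclusion I would, given such a $u$, explicitly build $Q := I_1(\bar x,u)\cup I_1^+(\bar x,u)\cup T$ for a suitable $T \subset \bar I_2$ making $C_Q \ne \emptyset$ (using $\Upsilon(\tilde I)$ to guarantee feasibility of the defining linear system, analogously to the system \eqref{eqs} argument) and $P := \tilde I$, then check $0 \in A_{Q,P}$ so that $(0,-u)$ witnesses $u \in {\rm Dom}$.

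Next I would prove \eqref{DCN}. Given $v \in D^*_{\mathcal C}\mathcal N(\bar x,\bar x^*)(u)$, fix the witnessing $(Q,P)$. From $v \in A_{Q,P} = \cone\{a_i\mid i \in Q\setminus P\} + {\rm span}\{a_i \mid i\in P\}$ and the sign information $\langle a_i,u\rangle \ge 0$ on $Q\vert_{I_1}$, $\langle a_i,u\rangle = 0$ on $P \supset \tilde I$, I would show $Q\vert_{I_1} \subset I_1(\bar x,u)\cup I_1^+(\bar x,u)$ trivially (they partition $I_1(\bar x)$ by sign of $\langle a_i,u\rangle$, and $Q\vert_{I_1}$ avoids the negative part) and $P\vert_{I_1} \subset I_1(\bar x,u)$, so $A_{Q,P} \subset \cone\{a_i\mid i\in I_1^+(\bar x,u)\cup \bar I_2\} + {\rm span}\{a_i\mid i\in I_1(\bar x,u)\}$ after absorbing $Q\vert_{I_2} \subset \bar I_2$ into the cone part. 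Meanwhile $v \in \mathcal T_{Q\vert_{I_2}}$ with $Q\vert_{I_2} \in \mathcal I_2$ (since $C_Q \ne \emptyset$ and $Q \supset I_1(\bar x,u)$-type structure — I must verify $Q\vert_{I_1} = I_1(\bar x,u)\cup I_1^+(\bar x,u)$ exactly, or enlarge; the cleanest route is to show WLOG one may take $Q\vert_{I_1}$ maximal, i.e.\ all of $I_1(\bar x,u)\cup I_1^+(\bar x,u)$, which only shrinks $A_{Q,P}$'s span part but that's fine for an inclusion) gives $v \in \bigcup_{T\in\mathcal I_2}\mathcal T_T$. Part (ii): under linear independence I get the reverse inclusion by taking any $w$ in the right-hand set, writing $w = \sum_{i\in I_1^+\cup\bar I_2}\mu_i a_i + \sum_{i\in I_1(\bar x,u)}\eta_i a_i$ with $\mu_i \ge 0$ and $w \in \mathcal T_T$ for some $T \in \mathcal I_2$, then setting $Q := I_1(\bar x,u)\cup I_1^+(\bar x,u)\cup T$, $P := I_1(\bar x,u) \cap (\text{support considerations}) \cup \tilde I$ — actually $P := \tilde I \cup \{i \in I_1(\bar x,u) : \eta_i \ne 0\}$ won't lie in $\mathcal P$ unless... here independence saves me: $\tilde I \subset P \subset Q\vert_{I_1}$ is exactly the admissibility condition in \eqref{Lcoderivative-normalcone1}, and I just need $P$ with $\tilde I \subset P$, $P \supset$ the span-support of $w$, $P \subset I_1(\bar x,u)$; then verify $w \in A_{Q,P}$ and $-u \in B_{Q\vert_{I_1},P}$ (the sign conditions hold by construction) and $w \in \mathcal T_{Q\vert_{I_2}} = \mathcal T_T$.

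The main obstacle I anticipate is the bookkeeping around $\Upsilon(\tilde I)$ and the requirement $C_Q \ne \emptyset$: ensuring that the index set $Q$ I construct in the reverse inclusions actually satisfies $C_Q \ne \emptyset$ is a feasibility-of-linear-system question, and the role of $\Upsilon(\tilde I)$ is precisely to rule out indices whose constraint $\langle a_i,\cdot\rangle = 0$ is implied on $\bar C_{\tilde I}$ — these cannot be moved into the strict-inequality part of $C_Q$, so they are forced into $Q$, which is why the domain description must include the sign condition on $\Upsilon(\tilde I)\setminus\tilde I$. Getting the interplay between ``$i$ must be in $Q$'' (when $i \in \Upsilon(\tilde I)$) and ``the sign of $\langle a_i,u\rangle$'' exactly right, and confirming that $\mathcal I_2$ as defined captures precisely the achievable $Q\vert_{I_2}$ parts, is where the argument is most delicate; the rest is essentially the decomposition $I_1(\bar x) = I_1(\bar x,u) \sqcup I_1^+(\bar x,u) \sqcup \{i : \langle a_i,u\rangle < 0\}$ combined with the structure of $A_{Q,P}$ and $B_{Q\vert_{I_1},P}$ already in hand. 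In the general (dependent) case one only gets ``$\subset$'' in \eqref{DCN} because $P \in \mathcal P$ can be strictly larger than forced by $\tilde I$ alone, so the reverse construction of an admissible $P$ may fail; this is consistent with the statement, which claims equality only in part (ii).
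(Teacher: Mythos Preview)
Your overall strategy---starting from Theorem~\ref{LCFormula1} and analyzing which pairs $(Q,P)$ can witness a given $u$---is exactly the route the paper takes. However, two steps in your plan are genuinely incomplete and do not match how the paper closes them.

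\textbf{(a) The inclusion $\tilde I\subset Q\vert_{I_1}$.} You correctly flag that this is needed, but ``choosing $P$ appropriately or arguing it must hold'' is not an argument. In the general (dependent) case you cannot deduce it from $P\in\mathcal P$, since $P$ need not contain $\tilde I$. The paper's argument is the missing idea: take any $x\in C_Q$ (nonempty); since $P\subset Q$, $\langle\bar x^*,x\rangle=\sum_{j\in P}\mu_j\langle a_j,x\rangle=0$; rewriting $\bar x^*=\sum_{i\in\tilde I}\lambda_i a_i$ with $\lambda_i>0$, and noting that each $\langle a_i,x\rangle\le 0$ for $i\in\tilde I\subset I$, the sum $0=\sum_{i\in\tilde I}\lambda_i\langle a_i,x\rangle$ forces every term to vanish. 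Hence each $i\in\tilde I$ satisfies $\langle a_i,x\rangle=0$, so $i\in Q$. This is what then feeds $\langle\bar x^*,u\rangle=0$ and the $\Upsilon(\tilde I)$ step you outline.

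\textbf{(b) Reverse inclusion in \eqref{domD}.} You propose $Q:=I_1(\bar x,u)\cup I_1^+(\bar x,u)\cup T$ and appeal to the system~\eqref{eqs} to get $C_Q\ne\emptyset$. But \eqref{eqs} relies on linear independence of $\{a_i\mid i\in\bar I\}$, which is not assumed in part~(i), so this does not work. The paper instead takes $P:=\tilde I$ and $Q:=\Upsilon(\tilde I)$ (note $Q\vert_{I_2}=\emptyset$ here), and manufactures a point of $C_Q$ by summing, over $i\in I\setminus\Upsilon(\tilde I)$, witnesses $x_i\in\bar C_{\tilde I}$ with $\langle a_i,x_i\rangle<0$ (for $i\in I\setminus I(\bar x)$ one simply uses $\bar x$). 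The definition of $\Upsilon(\tilde I)$ is precisely what guarantees such witnesses exist. Your choice of $Q$ does not come with a feasibility argument.

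Apart from these two points, your treatment of \eqref{DCN} and of part~(ii) matches the paper: one shows $P\subset I_1(\bar x,u)$, $Q\vert_{I_1}\subset I_1(\bar x,u)\cup I_1^+(\bar x,u)$ to enlarge $A_{Q,P}$ into the stated cone-plus-span, and records $Q\vert_{I_2}$ as a member of $\mathcal I_2$; for~(ii) one runs the construction backwards with $P:=I_1(\bar x,u)$ and $Q:=I_1(\bar x,u)\cup I_1^+(\bar x,u)\cup T$ using \eqref{Lcoderivative-normalcone1}. (A minor slip: enlarging $Q$ with $P$ fixed \emph{enlarges} $A_{Q,P}$, not shrinks it; this is harmless for the inclusion you want.)
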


\begin{proof}
We first prove  assertion (i). Take $(u,v)\in \gph D^*_{\mathcal C}\mathcal N(\bar x,\bar x^*).$ By the definition, we have $(v,-u)\in N_{\mathcal C}((\bar x,\bar x^*),\gph \mathcal N),$ which is equivalent to, due to Theorem~\ref{LNCFormula1}, that there exist $Q\subset I(\bar x)$ and $ P\subset Q$ such that $C_Q\ne \emptyset$, $P\in \mathcal P:=\mathcal P(\bar x,\bar x^*)$, and   $(v,-u)\in A_{Q,P}\cap\mathcal T_{Q\vert_{I_2}}\times B_{Q\vert_{I_1},P}.$  This means that
  \begin{align}
 \label{DCNTheorem-eq1}         v\in \left(\cone\{a_i\mid i\in Q\setminus P\}+{\rm span}\{a_i\mid i\in P\}\right) \cap\mathcal T_{Q\vert_{I_2}},\\
  \label{DCNTheorem-eq2}        \langle a_i, u\rangle\ge 0\; \forall i\in Q\vert_{I_1} \text{ \rm and } \langle a_i, u\rangle = 0\; \forall i\in P.
\end{align}
It follows that \eqref{DCNTheorem-eq2} is necessary and sufficient to $u\in {\rm Dom}\, D^*_{\mathcal C}\mathcal N(\bar x,\bar x^*).$
Let $u$ satisfy \eqref{DCNTheorem-eq2}. We next prove that
\begin{equation}\label{DCNTheorem-eq5}
\langle a_i, u\rangle= 0\; \forall i\in \tilde I \text{ \rm and } \langle a_i, u\rangle\ge 0\; \forall i\in \Upsilon(\tilde I)\setminus \tilde I,
\end{equation}
 with
$\tilde I=\mathcal I(\bar x,\bar x^*).$ We first show that the following equations \begin{equation}
     \label{DCNTheorem-eq3}
     \langle a_i, x\rangle = 0,\; \forall i\in \tilde I
 \end{equation}
hold for any $x\in C_Q$, where $C_Q$ is defined by \eqref{Cq}. Indeed, according to $P\in\mathcal P,$ we can assume that $\bar x^*=\sum_{j\in P}\mu_ja_j$ with  $\mu_j\ge 0$ for all $j\in P.$ Pick an arbitrarily $x\in C_Q.$ Since $P\subset Q$, we have $\langle \bar x^*, x\rangle = 0$, which is  due to the definition of $C_Q.$ From the definition of $\mathcal I(\bar x,\bar x^*),$ we achieve
$\langle \sum_{i\in \tilde I}\lambda_ia_i, x\rangle=\langle \bar x^*, x\rangle = 0,$ where $\lambda_i> 0$ for all $i\in \tilde I$ and $\bar x^*=\sum_{i\in\tilde I}\lambda_ia_i.$ This together with the fact that $\langle a_i, x \rangle \le 0$ for all $ i\in \tilde I$ yields  \eqref{DCNTheorem-eq3}. Therefore, $\tilde I\subset Q$ and then $\tilde I\subset Q\vert_{I_1}$ due to  $\tilde I\subset I_1.$ Taking \eqref{DCNTheorem-eq2} into account, we have $\langle a_i, u\rangle\ge 0$ for all $i\in \tilde I$. Combining this and the fact that $\langle \bar x^*, u\rangle=0$, we obtain
\begin{equation}
\langle a_i,u\rangle=0\; \forall i\in \tilde I.\label{DCNTheorem-eq4}
\end{equation}
We next prove that $\Upsilon(\tilde I)\subset Q\vert_{I_1}.$ Take arbitrarily $i_0\in \Upsilon(\tilde I)\subset I_1(\bar x).$ From the nonemptiness of $C_Q$, there is $x\in \R^n$ such that $\langle a_i, x\rangle = 0$ for all $i\in Q$ and $\langle a_i, x\rangle<0$ for all $i\in I\setminus Q$. It follows that $\langle a_{i_0},x\rangle=0$, which gives us that $i_0\in Q$ according to the definition of $C_Q$ and the fact that $x\in C_Q.$ Therefore, $\Upsilon(\tilde I)\subset Q$, which implies that $\Upsilon(\tilde I)\subset Q\vert_{I_1}$. In view of  \eqref{DCNTheorem-eq2},  one has $\langle a_i, u\rangle\ge 0$  for all $i\in \Upsilon(\tilde I)\setminus \tilde I$. Combining this and \eqref{DCNTheorem-eq4}, we derive \eqref{DCNTheorem-eq5}.

To complete the proof of \eqref{domD}, it remains to show that the relations in \eqref{DCNTheorem-eq5} imply $u\in {\rm Dom}\, D^*_{\mathcal C}N(\bar x,\bar x^*).$ Indeed, let \eqref{DCNTheorem-eq5} hold. Putting $P:=\tilde I$ and $Q:=\Upsilon(\tilde I)$, one has $Q=Q\vert_{I_1}=\Upsilon(\tilde I).$ For each $i\in I(\bar x)\setminus \Upsilon(\tilde I)$, one finds that there exists, by the definition of $\Upsilon(\tilde I)$ (see \eqref{upsilonT}), an element $x_i$ satisfying
$$
\langle a_j, x_i\rangle= 0\; \forall j\in P, \; \langle a_j,x_i\rangle\le 0\; \forall j\in I \setminus P \; \text{ \rm and } \langle a_i, x_i\rangle<0.
$$
For each $i\in I\setminus I(\bar x),$ we put $x_i:=\bar x$. Hence, $\langle a_j, x_i\rangle= 0$ for all $j\in I(\bar x)$ and  $\langle a_i, x_i\rangle<0.$ By the expression $I\setminus Q=I\setminus \Upsilon(\tilde I)=\big(I\setminus I(\bar x)\big)\cup \big(I(\bar x)\setminus \Upsilon(\tilde I)\big),$ we have the following assertion:
for each $i\in I\setminus Q$, there is $x_i\in \R^n$ satisfying
$$\langle a_j, x_i\rangle= 0,\; \forall j\in P, \; \langle a_j,x_i\rangle\le 0,\; \forall j\in I\setminus P \; \text{ \rm and } \langle a_i, x_i\rangle<0.$$
Setting $x:=\sum_{i\in I\setminus Q\vert_{I_1}}x_i,$ one sees that the following relations hold:
$$
\langle a_i, x \rangle=0\; \forall i\in P \text{ \rm and } \langle a_i,x\rangle\le 0\; \forall i\in I\setminus P.
$$
Therefore, $x\in \bar C_{P}$. From the definition $\Upsilon(\tilde I)$,  $P=\tilde I,$ and $Q=Q\vert_{I_1}=\Upsilon(\tilde I)$, one has
$$
\langle a_i, x\rangle= 0\; \forall i\in Q\; \text{ \rm and } \langle a_i, x\rangle= \langle a_i, x_i\rangle+\sum_{i\ne j\in I\setminus Q}\langle a_i,x_j\rangle<0\; \forall i\in I\setminus Q.
$$
It follows that $x\in C_Q$ and thus $C_{Q}\ne \emptyset.$ So we have $\bar I\supset Q=Q\vert_{I_1}\supset P\in\mathcal P$ and $C_Q\ne\emptyset.$ Moreover, it easy to see (even if $Q\vert_{I_2}=\emptyset$) that
$$
(0,-u)\in \big(A_{Q,P}\cap \mathcal T_{Q\vert_{I_2}}\big)\times B_{Q\vert_{I_1}, P}
$$
and hence $u\in {\rm Dom}\, D^*_{\mathcal C}\mathcal N(\bar x,\bar x^*)$ due to Theorem~\ref{LNCFormula}.

We next justify  relation \eqref{DCN}. Let $(v,v)$ satisfy \eqref{DCNTheorem-eq1} and DCNTheorem-eq2, i.e., there exist $Q\subset I(\bar x)$ and $ P\subset Q$ such that $C_Q\ne \emptyset$, $Q\vert_{I_1}\supset P\in \mathcal P$, and \eqref{DCNTheorem-eq1}  and   \eqref{DCNTheorem-eq2} hold. From the definition of $I_1(\bar x,u), I_1^+(\bar x,u)$ and \eqref{DCNTheorem-eq2}, we have
$$
P\subset I_1(\bar x,u)\subset Q\vert_{I_1}\subset I_1(\bar x,u)\cup I_1^+(\bar x,u).
$$
Since $C_Q=C_{Q\vert_{I_1}\cup Q\vert_{I_2}}\ne \emptyset,$ we have $C_{I_1(\bar x,u)\cup I_1^+(\bar x,u)\cup Q\vert_{I_2}}\ne \emptyset$, which implies that $Q\vert_{I_2}\in\mathcal I_2.$ According to \eqref{DCNTheorem-eq1}, we obtain
 \begin{align*}
        v&\in \left(\cone\{a_i\mid i\in Q\setminus P\}+{\rm span}\{a_i\mid i\in P\}\right) \cap\mathcal T_{Q\vert_{I_2}},\\
        &\subset \left(\cone\{a_i\mid i\in Q\setminus I_1(\bar x,u)\}+{\rm span}\{a_i\mid i\in I_1(\bar x,u)\}\right) \cap\left(\bigcup_{T\in\mathcal I_2}\mathcal T_T\right).
  \end{align*}
It is not difficult to see that $Q\setminus I_1(\bar x,u)\subset I_1^+(\bar x,u)\cup\bar I_2.$ Therefore, we derive that
$$
v\in \left(\cone\{a_i\mid i\in I_1^+(\bar x,u)\cup\bar I_2\}+{\rm span}\{a_i\mid i\in I_1(\bar x,u)\}\right) \cap\left(\bigcup_{T\in\mathcal I_2}\mathcal T_T\right).
$$

{\rm (ii)} We   show the equation \eqref{DCNnew} in the case that    $\{a_i\mid i\in I(\bar x)\}$ are linear independent. Let $(u,v)$ satisfy
\begin{align}\label{DCNTheorem-eq7}
\langle a_i,u\rangle=0\; \forall i\in \tilde I,\; \langle a_i,u\rangle\ge 0\;\forall j\in \Upsilon(\tilde I)\setminus\tilde I
\end{align}
and
\begin{align}\label{DCNTheorem-eq6}
v\in \big({\rm span}\,\{a_i\mid i\in I_1(\bar x,u)\} + \cone\{a_i\mid i\in I_1^+(\bar x,u)\cup \bar I_2\}\big)\cap\mathcal T_T,
\end{align}
with some $T\in \mathcal I_2.$ Under the assumed linear independence of the generating elements $\{a_i\mid i\in I(\bar x)\},$ we have $\tilde I=\Upsilon(\tilde I)=\mathcal I(\bar x,\bar x^*).$ Setting $P:=I_1(\bar x, u)$, $Q\vert_{I_2}:=T$, $Q\vert_{I_1}:=I_1(\bar x,u)\cup I_1^+(\bar x,u)$ and $Q=Q\vert_{I_1}\cup Q\vert_{I_2},$ we find from  \eqref{DCNTheorem-eq7} that $C_Q\ne\emptyset$ and $\mathcal I(\bar x,\bar x^*)\subset P$. Moreover, \eqref{DCNTheorem-eq6} implies that
\begin{align}
v\in \big(\cone\{a_i\mid i\in P\} + {\rm span}\,\{a_i\mid i\in Q\setminus P\}\big)\cap \mathcal T_{Q\vert_{I_2}},
\end{align}
which follows that $(v,-u)\in A_{P,Q}\cap \mathcal T_{Q\vert_{I_2}}\times B_{P,Q\vert_{I_1}}.$ By using Theorem~\ref{LCFormula1},  we obtain
$v\in D^*_{\mathcal C}\mathcal N(\bar x,\bar x^*)(u),$ which follows that the relation ``$\supset$'' in \eqref{DCNnew} holds. In view of  \eqref{DCN}, we derive \eqref{DCNnew} and the proof is completed.
\end{proof}

\section{Applications of formulas of the limiting corderivative with respect to a set of the normal cone mapping}
\subsection{Aubin property with respect to a set of variational inequalities}
One of the applications of the limiting coderivative with respect to a set of a multifunction is to provide equivalent characteristics for its Aubin property with respect to a set. In this subsection, by using the formulas of the limiting coderivative with respect to $\mathcal C$ of $\mathcal N$, we state the necessary and sufficient conditions for the Aubin property with respect to $\mathcal C$ of the multifunction $G:\mathbb R^n\rightrightarrows\mathbb R^n$ defined by
\begin{equation}\label{Gfunc}
    G(x):=\left\{y\mid y\in -\bar x^*+N(x,\Theta)\right\} \text{ \rm for all } x\in \R^n,
\end{equation}
where $\bar x^*\in \R^n$ is given. One observes that $y\in G(x)$ with $x\in\Theta$ if and only if $y$ is a solution to the following {\it simple variational inequalities}
$$
\langle y+\bar x^*, u-x \rangle\le 0\; \text{ \rm for all } u\in \Theta.
$$
From the definition of $G$, it is not difficult to obtain the following proposition.

\begin{proposition}\label{DCN-G}
Let $\Theta$ and $\mathcal C\subset \R^n$ be given by \eqref{Theta} and \eqref{mathcalC}, respectively, and let $G$ be defined by \eqref{Gfunc}. For any $\bar x\in \mathcal C\cap\Theta$, we have
 $$\dom D^*_{\mathcal C}G(\bar x,0)=\dom D^*_{\mathcal C}\mathcal N(\bar x,\bar x^*),$$
  $$D^*_{\mathcal C}G(\bar x,0)(u)= D^*_{\mathcal C}\mathcal N(\bar x,\bar x^*)(u)\; \text{ \rm for all } u\in \dom D^*_{\mathcal C}G(\bar x,0).$$
  Consequently, $G$ has the Aubin property with respect to $\mathcal C$ at $(\bar x,0)$ if and only if $\mathcal N$ has that property at $(\bar x,\bar x^*).$
\end{proposition}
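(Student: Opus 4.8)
The plan is to observe that $\gph G$ is nothing but a translate of $\gph\mathcal N$ in the range variable, and that translations in the $\R^n$-range factor leave the set $\mathcal C\times\R^n$ — hence every ``with respect to $\mathcal C\times\R^n$'' construction — unchanged. From \eqref{Gfunc} and \eqref{mathcalN}, $(x,y)\in\gph G$ if and only if $y+\bar x^*\in N(x,\Theta)$, i.e. $(x,y+\bar x^*)\in\gph\mathcal N$; equivalently $\gph G=\gph\mathcal N-(0,\bar x^*)$. (We tacitly use $\bar x^*\in N(\bar x,\Theta)$, which is precisely the condition making both $(\bar x,0)\in\gph G$ and $(\bar x,\bar x^*)\in\gph\mathcal N$ meaningful; otherwise both coderivatives are vacuous.) Since $\mathcal C\times\R^n$ is invariant under the translation $(0,-\bar x^*)$, intersecting yields $\gph G_{\mathcal C}=\gph\mathcal N_{\mathcal C}-(0,\bar x^*)$, and in particular $\gph G$ is locally closed around $(\bar x,0)$ because $\gph\mathcal N$ is (polyhedral, hence closed).

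First I would transfer the Fr\'{e}chet normal cone with respect to the set. By Proposition~\ref{pro1}(i), $\hat N_{\mathcal C\times\R^n}((\bar x,0),\gph G)=\hat N((\bar x,0),\gph G_{\mathcal C})\cap\mathcal R((\bar x,0),\mathcal C\times\R^n)$. The classical Fr\'{e}chet normal cone is translation invariant, so $\hat N((\bar x,0),\gph G_{\mathcal C})=\hat N((\bar x,\bar x^*),\gph\mathcal N_{\mathcal C})$; moreover $\mathcal R((\bar x,0),\mathcal C\times\R^n)=\mathcal R(\bar x,\mathcal C)\times\R^n=\mathcal R((\bar x,\bar x^*),\mathcal C\times\R^n)$, since only the range coordinate is shifted and $\R^n-\bar x^*=\R^n$. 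Hence $\hat N_{\mathcal C\times\R^n}((\bar x,0),\gph G)=\hat N_{\mathcal C\times\R^n}((\bar x,\bar x^*),\gph\mathcal N)$, and the very same identity holds at every pair of corresponding base points $(x,y)\in\gph G_{\mathcal C}$, $(x,y+\bar x^*)\in\gph\mathcal N_{\mathcal C}$ near the respective reference points. Taking the Painlev\'{e}--Kuratowski outer limit in \eqref{limiting-normal-cone} then gives $N_{\mathcal C\times\R^n}((\bar x,0),\gph G)=N_{\mathcal C\times\R^n}((\bar x,\bar x^*),\gph\mathcal N)$.

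With this equality of limiting normal cones, the coderivative assertions are immediate from Definition~\ref{coderivative-wrt}: for any $u$, $v\in D^*_{\mathcal C}G(\bar x,0)(u)$ iff $(v,-u)\in N_{\mathcal C\times\R^n}((\bar x,0),\gph G)$ iff $(v,-u)\in N_{\mathcal C\times\R^n}((\bar x,\bar x^*),\gph\mathcal N)$ iff $v\in D^*_{\mathcal C}\mathcal N(\bar x,\bar x^*)(u)$, which also forces $\dom D^*_{\mathcal C}G(\bar x,0)=\dom D^*_{\mathcal C}\mathcal N(\bar x,\bar x^*)$. For the final claim, since $\gph G$ is locally closed around $(\bar x,0)$ and $\mathcal C$ is closed and convex, Theorem~\ref{thm1} applies both to $G$ at $(\bar x,0)$ and to $\mathcal N$ at $(\bar x,\bar x^*)$; combined with $D^*_{\mathcal C}G(\bar x,0)(0)=D^*_{\mathcal C}\mathcal N(\bar x,\bar x^*)(0)$ this shows that $G$ has the Aubin property with respect to $\mathcal C$ at $(\bar x,0)$ if and only if $\mathcal N$ has it at $(\bar x,\bar x^*)$. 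I do not anticipate any real obstacle; the only point deserving a little care is the middle step — confirming that neither the radial cone $\mathcal R(\cdot,\mathcal C\times\R^n)$ nor the local graph data feeding $\hat N\big(\cdot,\cdot\cap(\mathcal C\times\R^n)\big)$ is affected by a translation purely in the $\R^n$-range factor, so that the relative normal-cone calculus transfers verbatim.
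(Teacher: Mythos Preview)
Your argument is correct and is precisely the approach the paper has in mind: the paper omits a proof, saying only that the proposition follows ``from the definition of $G$,'' and your translation argument --- $\gph G=\gph\mathcal N-(0,\bar x^*)$ together with the invariance of $\mathcal C\times\R^n$ under that shift --- is exactly the detail behind that remark.
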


From Theorem~\ref{thm1} or \cite[Theorem~3.6]{MWY23}, and Theorem~\ref{LCFormula1}, and Proposition~\ref{DCN-G}, we have explicit characteristics for the Aubin property with respect to $\mathcal C$ of $G$ as follows.

\begin{theorem}\label{Lip-like-con0}
Let $\Omega$ and $\mathcal C$ be respectively given by \eqref{Theta} and \eqref{mathcalC}. Let $\bar x\in \mathcal C$ and $(\bar x,\bar x^*)\in \gph \mathcal N$. The following assertions hold:

        {\rm (i)} The mapping $G$ defined by \eqref{Gfunc} satisfies the Aubin property with respect to $\mathcal C$ at $(\bar x, 0)$ if and only if the following implication holds:
        \begin{align*}
        \text{\rm for } Q\subset\bar I, P\in \mathcal P(\bar x,\bar x^*) \text{ \rm with } C_Q\ne \emptyset, P\subset Q\vert_{I_1},\\
        (v,0)\in A_{Q, P}\cap \mathcal T_{Q\vert_{I_2}}\times B_{Q\vert_{I_1},P} \implies v=0.
        \end{align*}

        {\rm (ii)} If, in addition, the generating elements $\{a_i\mid a_i\in I(\bar x)\}$ are linear independent, then $G$ satisfies the Aubin property with respect to $\mathcal C$ at $(\bar x,0)$ if and only if the following implication is fulfilled:
 \begin{align*}
        \text{\rm for } Q\subset\bar I \text{ \rm and } \mathcal I(\bar x,\bar x^*)\subset P\subset Q\vert_{I_1},\\
        (v,0)\in A_{Q, P}\cap \mathcal T_{Q\vert_{I_2}}\times B_{Q\vert_{I_1},P} \implies v=0.
        \end{align*}
\end{theorem}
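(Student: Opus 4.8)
The plan is to derive both assertions as immediate corollaries of results already established, namely Proposition~\ref{DCN-G}, the Mordukhovich-type criterion in Theorem~\ref{thm1}, and the coderivative formulas of Theorem~\ref{LCFormula1}. First I would observe that $\gph\mathcal N$ is a finite union of polyhedral convex sets (one for each possible active index set of $\Theta$), hence closed, so the local closedness hypothesis required by Theorem~\ref{thm1} is satisfied at $(\bar x,\bar x^*)$. By Proposition~\ref{DCN-G}, $G$ has the Aubin property with respect to $\mathcal C$ at $(\bar x,0)$ if and only if $\mathcal N$ has it at $(\bar x,\bar x^*)$, and by Theorem~\ref{thm1} (equivalently \cite[Theorem~3.6]{MWY23}) this in turn holds if and only if $D^*_{\mathcal C}\mathcal N(\bar x,\bar x^*)(0)=\{0\}$.

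Next I would substitute the argument $u=0$ into formula~\eqref{Lcoderivative-normalcone} of Theorem~\ref{LCFormula1}. This expresses $D^*_{\mathcal C}\mathcal N(\bar x,\bar x^*)(0)$ as the set of all $v$ for which there exist $Q\subset\bar I$ and $P\in\mathcal P(\bar x,\bar x^*)$ with $C_Q\ne\emptyset$ and $P\subset Q\vert_{I_1}$ such that $(v,0)\in A_{Q,P}\cap\mathcal T_{Q\vert_{I_2}}\times B_{Q\vert_{I_1},P}$. Demanding that this set reduce to $\{0\}$ is precisely the implication stated in part~(i). Part~(ii) follows by the same argument with formula~\eqref{Lcoderivative-normalcone} replaced by formula~\eqref{Lcoderivative-normalcone1}, which is valid under the assumed linear independence of $\{a_i\mid i\in I(\bar x)\}$ and which carries the index condition $\mathcal I(\bar x,\bar x^*)\subset P\subset Q\vert_{I_1}$ in place of ``$P\in\mathcal P(\bar x,\bar x^*)$ with $C_Q\ne\emptyset$''.

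I do not expect a genuine obstacle, since the statement is essentially a corollary; the care needed is only in the bookkeeping. One must unpack the condition $D^*_{\mathcal C}\mathcal N(\bar x,\bar x^*)(0)=\{0\}$ as the universally quantified implication ``for every admissible pair $(Q,P)$ and every $v$ with $(v,0)$ in the corresponding product set, $v=0$'', using that $0$ always lies in $D^*_{\mathcal C}\mathcal N(\bar x,\bar x^*)(0)$: indeed $0\in\dom D^*_{\mathcal C}\mathcal N(\bar x,\bar x^*)$ by \eqref{domD}, and for the admissible choice $P=\tilde I$, $Q=\Upsilon(\tilde I)$ exhibited in the proof of Theorem~\ref{DCNTheorem} one has $(0,0)\in A_{Q,P}\cap\mathcal T_{Q\vert_{I_2}}\times B_{Q\vert_{I_1},P}$. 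It also remains to note that $u=0$ is a legitimate value of the argument to which Theorem~\ref{LCFormula1} applies, which it is; after these routine verifications the two equivalences follow at once.
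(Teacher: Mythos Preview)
Your proposal is correct and follows exactly the route the paper itself indicates: the theorem is stated there as a direct consequence of Theorem~\ref{thm1} (or \cite[Theorem~3.6]{MWY23}), Theorem~\ref{LCFormula1}, and Proposition~\ref{DCN-G}, with no further argument given. Your additional remarks on the closedness of $\gph\mathcal N$ and on $0\in D^*_{\mathcal C}\mathcal N(\bar x,\bar x^*)(0)$ are accurate and merely make explicit the routine verifications the paper leaves implicit.
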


From the following theorem, one sees that it is easy  to check characteristics of the Aubin property with respect to $\mathcal C$ of $G$.

\begin{theorem}\label{Lip-like-con}
Let $\Omega$ and $\mathcal C$ be respectively given by \eqref{Theta} and \eqref{mathcalC}. Let $\bar x\in \mathcal C$ and $(\bar x,\bar x^*)\in \gph \mathcal N$. The following assertions hold:

        {\rm (i)} The mapping $G$ satisfies the Aubin property with respect to $\mathcal C$ at $(\bar x,0)$ if
        \begin{equation}\label{nec-suf-lip}
        \left(\cone\{a_i\mid i\in  \bar I_2\} + {\rm span}\{a_i\mid i\in \bar I_1\}\right)\cap\left(\bigcup_{T\in\mathcal I_2}\mathcal T_T\right)=\{0\},
        \end{equation}
        i.e., whenever there exist numbers $\lambda_i\ge 0$, $\beta_j\in \R$ for all $j\in \bar I_1$, $i\in \bar I_2$ and a set $T\subset \mathcal I_2$ satisfying $$\langle a_k,\sum_{i\in \bar I_2}\lambda_ia_i+\sum_{j\in \bar I_1}\beta_ja_j\rangle\le 0\; \forall k\in T,$$ we have
        $\sum_{i\in \bar I_2}\lambda_ia_i+\sum_{j\in \bar I_1}\beta_ja_j=0.$ Here $\mathcal I_2:=\left\{T\subset \bar I_2\mid  C_{\bar I_1\cup T}\ne \emptyset\right\}.$

        {\rm (ii)} If, in addition, the generating elements $\{a_i\mid a_i\in I(\bar x)\}$ are linear independent then $G$ is the Aubin property with respect to $\mathcal C$ at $(\bar x,0)$ if and only if   relation \eqref{nec-suf-lip} holds, i.e., whenever there exist numbers $\lambda_i\ge 0$, $\beta_j\in \R$ for all $j\in \bar I_1$, $i\in \bar I_2$, and a set $T\subset \mathcal I_2$ satisfying $$\langle a_k,\sum_{i\in \bar I_2}\lambda_ia_i+\sum_{j\in \bar I_1}\beta_ja_j\rangle\le 0\; \forall k\in T,$$ we have $\lambda_i=\beta_j=0$ for all $i\in \bar I_2, j\in \bar I_1.$ Here $\mathcal I_2:=\left\{T\subset \bar I_2\mid  C_{\bar I_1\cup T}\ne \emptyset\right\}.$
\end{theorem}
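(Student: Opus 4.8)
The plan is to read the statement off the coderivative formulas already established, through the Mordukhovich-type criterion with respect to a set. First I would invoke Theorem~\ref{thm1} with $F=G$ and $\bar y=0$: since $\gph G$ is a translate of $\gph\mathcal N$ and hence (locally) closed, $G$ has the Aubin property with respect to $\mathcal C$ at $(\bar x,0)$ if and only if $D^*_{\mathcal C}G(\bar x,0)(0)=\{0\}$. By Proposition~\ref{DCN-G} this reduces to $D^*_{\mathcal C}\mathcal N(\bar x,\bar x^*)(0)=\{0\}$. I would note at the outset that $0\in\dom D^*_{\mathcal C}\mathcal N(\bar x,\bar x^*)$, since the two conditions defining the domain in \eqref{domD} hold trivially for $u=0$, and that $0\in D^*_{\mathcal C}\mathcal N(\bar x,\bar x^*)(0)$ because $(0,0)$ is always a limiting normal vector; therefore what has to be shown is exactly that the value $D^*_{\mathcal C}\mathcal N(\bar x,\bar x^*)(0)$ contains no nonzero element.

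The second step is to specialize Theorem~\ref{DCNTheorem} to $u=0$. Then $I_1(\bar x,0)=\bar I_1$ and $I_1^+(\bar x,0)=\emptyset$, so $\cone\{a_i\mid i\in I_1^+(\bar x,0)\cup\bar I_2\}+{\rm span}\{a_i\mid i\in I_1(\bar x,0)\}$ collapses to $\cone\{a_i\mid i\in\bar I_2\}+{\rm span}\{a_i\mid i\in\bar I_1\}$, and $\mathcal I_2=\{T\subset\bar I_2\mid C_{I_1(\bar x,0)\cup I_1^+(\bar x,0)\cup T}\ne\emptyset\}=\{T\subset\bar I_2\mid C_{\bar I_1\cup T}\ne\emptyset\}$, which is precisely the index family in the statement. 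Consequently the right-hand sides of \eqref{DCN} and \eqref{DCNnew}, evaluated at $u=0$, both become the left-hand side of \eqref{nec-suf-lip}.

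For part (i), the inclusion \eqref{DCN} then yields $D^*_{\mathcal C}\mathcal N(\bar x,\bar x^*)(0)\subset\big(\cone\{a_i\mid i\in\bar I_2\}+{\rm span}\{a_i\mid i\in\bar I_1\}\big)\cap\big(\bigcup_{T\in\mathcal I_2}\mathcal T_T\big)$; if \eqref{nec-suf-lip} holds this forces the value to be contained in $\{0\}$, hence equal to $\{0\}$ by Step~1, and the Aubin property follows. For part (ii), linear independence of $\{a_i\mid i\in I(\bar x)\}$ promotes \eqref{DCNnew} to an equality, so $D^*_{\mathcal C}\mathcal N(\bar x,\bar x^*)(0)=\{0\}$ if and only if \eqref{nec-suf-lip} holds, which together with Step~1 gives the claimed equivalence. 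The ``i.e.\ \dots'' reformulations in both parts are a routine unwinding of the definitions of $\cone$, ${\rm span}$, $\mathcal T_T$ and $\mathcal I_2$: membership of a vector in the displayed intersection means it can be written as $\sum_{i\in\bar I_2}\lambda_ia_i+\sum_{j\in\bar I_1}\beta_ja_j$ with $\lambda_i\ge 0$ and satisfies $\langle a_k,\cdot\rangle\le 0$ for all $k$ in some admissible $T$, and conversely; so the intersection being $\{0\}$ is the same as the stated coefficient condition.

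I do not expect a genuine obstacle: Theorems~\ref{LNCFormula}, \ref{LNCFormula1} and \ref{DCNTheorem} already carry all the weight, and this result is a corollary. The only points needing care are the bookkeeping in Step~1 — checking that $0$ lies in both the domain and the value of $D^*_{\mathcal C}\mathcal N(\bar x,\bar x^*)$, so that the inclusion ``$\subset\{0\}$'' upgrades to equality — and, in Step~2, verifying that the specialization of $\mathcal I_2$ at $u=0$ is exactly the family $\{T\subset\bar I_2\mid C_{\bar I_1\cup T}\ne\emptyset\}$ appearing in the statement rather than some larger set. Neither step is difficult.
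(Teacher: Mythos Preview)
Your proposal is correct and follows exactly the same route as the paper: the paper's proof is a one-liner invoking Theorem~\ref{thm1}, Theorem~\ref{DCNTheorem}, Proposition~\ref{DCN-G}, and the observation that $I_1(\bar x,0)\cup I_1^+(\bar x,0)=I_1(\bar x)=\bar I_1$, and you have simply spelled out how these ingredients combine.
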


\begin{proof}
The proof of the theorem is directly  from Theorem~\ref{thm1}, Theorem \ref{DCNTheorem}, Proposition~\ref{DCN-G},  and the fact that $I_1(\bar x,0)\cup I_1^+(\bar x,0)=I_1(\bar x)=\bar I_1.$
\end{proof}

We now consider examples to illustrate to above characteristics of the Aubin property with respect to $\mathcal C$ of $G$.

\begin{example}[$\mathcal N$ does not satisfy the Aubin property with respect to $\mathcal C\subset \Theta$]
Consider the following two polyhedrons
$$\Theta:=\left\{(x,y,z)\in \R^3\mid x+z\le 0\right\}$$
 and
$$\mathcal C:=\left\{(x,y,z)\in \R^3\mid x+y+z\le 0, y\ge 0\right\}.$$
Set $a_1=(1,0,1), a_2=(1,1,1)$ and $(a_3=(0,-1,0)$. Then we have $I_1=\left\{1\right\}$ and $I_2=\left\{2,3\right\}.$ It is easy to see that
$$N((x,y,z),\Theta)=
\begin{cases}
\cone\{(1,0,1)\}& \text{ \rm if } x+z = 0;\\
\{(0,0,0)\} & \text{ \rm if } x+z<0;\\
\emptyset& \text{ \rm otherwise.}
\end{cases}$$
Take $(\bar x,\bar y,\bar z)=(0,0,0)$ and $(\bar x^*,\bar y^*,\bar z^*)=(0,0,0).$ Thus
$N((\bar x,\bar y,\bar z),\Theta)=\cone\{(1,0,1)\}.$  Pick a sequence $(x_k,y_k,z_k):=(-\frac{2}{k},0,\frac{1}{k})\xrightarrow{\mathcal C}(\bar x,\bar y,\bar z)$ as $k\to\infty.$ Since $x_k+z_k<0$ and $N((x_k,y_k,z_k),\Theta)=\{(0,0,0)\}$, then there are no a neighborhood $V$ of $(\bar x^*,\bar y^*,\bar z^*)$ and a positive number $\ell$ such that
$$
N((\bar x,\bar y,\bar z),\Theta)\cap V\subset N((x_k,y_k,z_k),\Theta)+\ell\Vert(-\frac{1}{k},0,0)\Vert \mathbb B=\frac{\ell}{k}\B\to \{0\} \text{ \rm as } k\to \infty.
$$
This means that $N(\cdot,\Theta)$ does not satisfy the Aubin property with respect to $\mathcal C$ at $((0,0,0),(0,0,0)).$ Therefore, there exist, by Theorem~\ref{Lip-like-con}, $\lambda_i\ge 0, \beta_j\in \R$ for $i\in \bar I_2, j\in \bar I_1$, which satisfies $\sum_{i\in \bar I_2}\lambda_ia_i+\sum_{j\in \bar I_1}\beta_ja_j\ne 0$, and a subset $T$ of $\mathcal I_2$ such that
$$
\langle a_k,\sum_{i\in \bar I_2}\lambda_ia_i+\sum_{j\in \bar I_1}\beta_ja_j\rangle\le 0\; \forall k\in T.
$$
Indeed, take $T:=\bar I_2$ then $(-1,0,1)\in C_{\bar I_1\cup T}$, which follows that $T\in \mathcal I_2.$ Moreover, taking $\lambda_1=\lambda_2=1>0$ and $\beta=-2,$ we have
$$
\begin{cases}
\langle(0,-1,0), \lambda_1(1,1,1)+\lambda_2(0,-1,0)+\beta(1,0,1)\rangle &= 0\le 0,\\
\langle(1,1,1), \lambda_1(1,1,1)+\lambda_2(0,-1,0)+\beta(1,0,1)\rangle &= -2<0.
\end{cases}
$$
\end{example}

\begin{example}[$\mathcal N$ satisfies the Aubin property with respect to $\mathcal C\subset \Theta$]
Consider the following two polyhedrons
$$\Theta:=\left\{(x,y,z)\in \R^3\mid x+z\le 0\right\}$$
 and
$$\mathcal C:=\left\{(x,y,z)\in \R^3\mid x+y+z\le 0, y\ge 0, x+z\ge 0\right\}.$$ We denote $a_1=(1,0,1), a_2=(1,1,1), a_3=(0,-1,0)$ and $a_4=(-1,0,-1).$
Then we have $I_1=\left\{1\right\}$, $I_2=\left\{2,3,4\right\}$, and
$$N((x,y,z),\Theta)=
\begin{cases}
\cone\{(1,0,1)\}& \text{ \rm if } x+z = 0;\\
\{(0,0,0)\} & \text{ \rm if } x+z<0;\\
\emptyset& \text{ \rm otherwise.}
\end{cases}$$
Take $(\bar x,\bar y,\bar z)=(0,0,0)$ and $(\bar x^*,\bar y^*,\bar z^*)=(0,0,0).$ By the directly computation, we have $\mathcal I_2=\{\bar I_2\}.$ For any $\lambda_i\ge 0, \beta_j\in \R$ for $i\in \bar I_2$ and $i\in \bar I_1,$ satisfying the following inequalities
$$
\begin{cases}
\langle a_k,\sum\limits_{i\in \bar I_2}\lambda_ia_i+\beta_1a_1 \rangle\le 0\; \forall k\in \bar I_2, \\
\lambda_i\ge 0\; \forall i=1,2,3,
\end{cases}
$$
we have $\lambda_1-\lambda_2=\lambda_1-\lambda_3+\beta_1=0$,  so $\sum_{i\in\bar I_2}\lambda_ia_i+\beta_1a_1=0.$ Using Theorem~\ref{Lip-like-con}, we obtain that $N(\cdot, \Theta)$ satisfies the Aubin property with respect to $\mathcal C$ at $((\bar x,\bar y,\bar z),(\bar x^*,\bar y^*,\bar z^*)).$ We check this assertion by the definition. For any $(x_1,y_1,z_1), (x_2,y_2,z_2)\in \mathcal C\cap \B((0,0,0),1)$, we have $x_1+z_1=x_2+z_2=0$, so
$$N((x_1,y_1,z_1),\Theta)=N((x_2,y_2,z_2),\Theta)=\cone\{(1,0,1)\},$$ which  implies that
$$N((x_1,y_1,z_1),\Theta)\cap \mathbb B((0,0,0),1)\subset N((x_2,y_2,z_2),\Theta)+\Vert (x_1,y_1,z_1)-(x_2,y_2,z_2)\Vert\mathbb B.$$ Hence, $N(\cdot, \Theta)$ really satisfies the Aubin property with respect to $\mathcal C$ at $((\bar x,\bar y,\bar z),(\bar x^*,\bar y^*,\bar z^*)).$
\end{example}

In the case that $\mathcal C\nsubseteq \Theta,$ Theorem~\ref{Lip-like-con} still work very well. We consider the following example.

\begin{example}[The set $\mathcal C\nsubseteq \Theta$]
Consider the following two polyhedrons
$$\Theta:=\left\{(x,y,z)\in \R^3\mid x+z\le 0\right\}$$
 and
$$\mathcal C:=\left\{(x,y,z)\in \R^3\mid x+y+z\le 0, z\ge 0\right\}.$$
Setting $a_1=(1,0,1), a_2=(1,1,1)$ and $a_3=(0,0,-1)$, we have $I_1=\left\{1\right\}$, $I_2=\left\{2,3\right\}$ and
$$N((x,y,z),\Theta)=
\begin{cases}
\cone\{(1,0,1)\}& \text{ \rm if } x+z = 0;\\
\{(0,0,0)\} & \text{ \rm if } x+z<0;\\
\emptyset& \text{ \rm otherwise.}
\end{cases}$$
Take $(\bar x,\bar y,\bar z)=(0,0,0)$ and $(\bar x^*,\bar y^*,\bar z^*)=(0,0,0).$ We have $$N((\bar x,\bar y,\bar z),\Theta)=\cone\{(1,0,1)\}.$$ Pick a sequence $(x_k,y_k,z_k):=(-\frac{1}{k},0,\frac{2}{k})\xrightarrow{\mathcal C}(\bar x,\bar y,\bar z)$ as $k\to\infty.$ Since $x_k+z_k>0,$ $N((x_k,y_k,z_k),\Theta)=\emptyset$, then there are no a neighborhood $V$ of $(\bar x^*,\bar y^*,\bar z^*)$ and a positive number $\ell$ such that
$$N((\bar x,\bar y,\bar z),\Theta)\cap V\subset N((x_k,y_k,z_k),\Theta)+\ell\Vert(-\frac{1}{k},0,0)\Vert \mathbb B=\emptyset.$$
This follows that $N(\cdot,\Theta)$ does not satisfy the Aubin property with respect to $\mathcal C$ at $((0,0,0),(0,0,0)).$ Therefore, there exist $\lambda_i\ge 0, \beta_j\in \R$ for $i\in \bar I_2, j\in \bar I_1$, which are not all equal to zero, and a subset $T$ of $\mathcal I_2$ such that
$$\langle a_k,\sum_{i\in \bar I_2}\lambda_ia_i+\sum_{j\in \bar I_1}\beta_ja_j\rangle\le 0,\; \forall k\in T.$$
We show the assertion above by the directly computation. It is easy to see that if $T:=\{(0,0,-1)\}$, then $(-1,-1,1)\in C_{\bar I_1\cup T}$, which follows that $T\in \mathcal I_2.$ Moreover, taking $\lambda_1=\lambda_2=\beta=1> 0,$ we have
$$\langle(0,0,-1), \lambda_1(1,1,1)+\lambda_2(0,0,-1)+\beta(1,0,1)\rangle = -1<0.$$
\end{example}

\subsection{Optimality conditions for robust solutions to bilevel programming with linear lower level problem with uncertain }
In this section, we provide necessary conditions for optimal solutions of the following bilevel programming problems:
\begin{align}\label{BLP}
    \min\; &f(x)\tag{SBLP}\\
    \text{ \rm such that } & x\in \mathcal S\cap \mathcal C\nonumber\\
    \text{ \rm with }& \mathcal S:={\rm arcmin}\, \{c^{\top}x\mid x\in \Theta\},\nonumber
\end{align}
where $c\in \R^n,$ $f:\Theta\to \R$, and $\Theta, \mathcal C\subset \R^n$ are given by \eqref{Theta} and \eqref{mathcalC}, respectively.  Problem \eqref{BLP} can be written in the form of  problem \eqref{mpecs} as follows:
\begin{align}\label{BLP1}
    \min\; &f(x)\tag{MPEC}\\
    \text{ \rm such that } & 0\in G(x) \text{ \rm and } x\in \mathcal C_1\cap\mathcal C_2,\nonumber
\end{align} where $\mathcal C_1:=\mathcal C_2:=\mathcal C$ and $G(x):=\left\{y\mid y\in c+N(x,\Theta)\right\}$ for all $x\in \R^n.$

We consider the mixed problem of the \ref{BLP1} problem as follows.
\begin{align}\label{BLP2}
    \min\; &\tilde f(x,y)\tag{MMPEC}\\
    \text{ \rm such that } & y\in G(x) \text{ \rm and } x\in \mathcal C_1\cap\mathcal C_2,\nonumber
\end{align} where $\tilde f:\R^n\times\R^m\to \R$ is defined by $\tilde f(x,y):=f(x)$ for all $y\in \R^m.$ It is not difficult to see that $\bar x$ is a local solution to the \ref{BLP1} problem whenever $(\bar x,0)$ is a local solution to the \ref{BLP2} problem. However, the opposite assertion may not hold in general. The local solution $\bar x$ to the \ref{BLP1} problem is called to be a {\it robust local solution} to the \ref{BLP1} problem (and the \ref{BLP} problem also) if $(\bar x,0)$ is a local solution to the \ref{BLP2} problem.

Take $\bar x^*:=-c$ and use the notions $\bar I,\bar I_i$ for $i=1,2$ and so all as Section~4.  By using Theorem~\ref{nec-con}, we now provide optimality conditions for a robust local solution to the problem \eqref{BLP} under some qualification conditions as follows.

\begin{theorem}\label{optim-1}
    Consider  problem \eqref{BLP}. Let $\bar x$ be a robust local solution to problem \eqref{BLP}, and let the following assumptions hold:

    {\rm (i)} $[-\partial^{\infty}_{\mathcal C}f(\bar x)]\cap \left(\cone\{a_i\mid i\in  \bar I_2\} + {\rm span}\{a_i\mid i\in \bar I_1\}\right)\cap\left(\bigcup_{T\in\mathcal I_2}\mathcal T_T\right)= \{0\},$ where
    $$\mathcal I_2:=\left\{T\subset \bar I_2\mid  C_{\bar I_1\cup T}\ne \emptyset\right\}.$$

    {\rm (ii)} $\{\Omega_1,\Omega_2\}$ are normal-densed in $\{\mathcal C_1,\mathcal C_2\}$ at $(\bar x, 0 , f(\bar x)),$ where $$\Omega_1:=\left\{(x,y,z)\mid (x,z)\in\epi f, y\in \R^n\right\}$$ and $$\Omega_2:=\gph G\times\R.$$

    Then
    $$0\in \partial_{\mathcal C} f(\bar x) + \left(\cone\{a_i\mid i\in  \bar I_2\} + {\rm span}\{a_i\mid i\in \bar I_1\}\right)\cap \left(\bigcup_{T\in\mathcal I_2}\mathcal T_T\right).$$
\end{theorem}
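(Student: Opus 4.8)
The plan is to read \eqref{BLP} as the equilibrium program \eqref{BLP1}, an instance of \eqref{mpecs} with $\mathcal C_1=\mathcal C_2=\mathcal C$ and $G(x)=c+N(x,\Theta)$, and to invoke Theorem~\ref{nec-con}. First I would note that, being a robust local solution, $\bar x$ is a local solution of \eqref{BLP1} (equivalently $(\bar x,0)$ solves \eqref{BLP2} locally); feasibility gives $(\bar x,0)\in\gph G$, i.e.\ $(\bar x,\bar x^*)\in\gph\mathcal N$ with $\bar x^*:=-c$, and $\gph G$ is closed (a translate of $\gph\mathcal N$), so together with $f_{\mathcal C}\in\mathcal F(\bar x)$ (a standing hypothesis on $f$) the structural assumptions of Theorem~\ref{nec-con} are in force. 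Since $G(x)=-\bar x^*+N(x,\Theta)$, Proposition~\ref{DCN-G} gives $\dom D^*_{\mathcal C}G(\bar x,0)=\dom D^*_{\mathcal C}\mathcal N(\bar x,\bar x^*)$ and $D^*_{\mathcal C}G(\bar x,0)(0)=D^*_{\mathcal C}\mathcal N(\bar x,\bar x^*)(0)$. Everything then reduces to identifying this set and checking the two qualification conditions $(q_1),(q_2)$.

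The main step is to apply Theorem~\ref{DCNTheorem}(i) with $u=0$. Because $\langle a_i,0\rangle=0$ for every $i$, the characteristic active sets collapse to $I_1(\bar x,0)=\bar I_1$ and $I_1^+(\bar x,0)=\emptyset$, so the index family in \eqref{DCN} becomes $\mathcal I_2=\{T\subset\bar I_2\mid C_{\bar I_1\cup T}\ne\emptyset\}$, and $0\in{\rm Dom}\,D^*_{\mathcal C}\mathcal N(\bar x,\bar x^*)$ holds trivially by \eqref{domD}. Hence \eqref{DCN} yields
\[
D^*_{\mathcal C}G(\bar x,0)(0)=D^*_{\mathcal C}\mathcal N(\bar x,\bar x^*)(0)\subset K:=\Big(\cone\{a_i\mid i\in\bar I_2\}+{\rm span}\{a_i\mid i\in\bar I_1\}\Big)\cap\Big(\bigcup_{T\in\mathcal I_2}\mathcal T_T\Big),
\]
which is exactly the set appearing in assumption (i) and in the desired conclusion. (If the $\{a_i\mid i\in I(\bar x)\}$ are linearly independent this is even an equality by \eqref{DCNnew}, but only the inclusion is needed here.)

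Next I would discharge the qualification conditions. Condition $(q_2)$ of Theorem~\ref{nec-con} is, verbatim, assumption (ii). For $(q_1)$, i.e.\ $\partial^{\infty}_{\mathcal C}f(\bar x)\cap[-D^*_{\mathcal C}G(\bar x,0)(0)]=\{0\}$, I would use that $\partial^{\infty}_{\mathcal C}f(\bar x)$, $K$, and $D^*_{\mathcal C}G(\bar x,0)(0)$ are cones containing the origin and that $D^*_{\mathcal C}G(\bar x,0)(0)\subset K$; therefore
\[
\partial^{\infty}_{\mathcal C}f(\bar x)\cap[-D^*_{\mathcal C}G(\bar x,0)(0)]\subset\partial^{\infty}_{\mathcal C}f(\bar x)\cap[-K]=-\big([-\partial^{\infty}_{\mathcal C}f(\bar x)]\cap K\big)=\{0\}
\]
by assumption (i), while the reverse inclusion is trivial. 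So both qualification conditions hold.

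Finally, Theorem~\ref{nec-con} delivers $0\in\partial_{\mathcal C}f(\bar x)+D^*_{\mathcal C}G(\bar x,0)(0)$, and combining with $D^*_{\mathcal C}G(\bar x,0)(0)\subset K$ gives $0\in\partial_{\mathcal C}f(\bar x)+K$, which is the assertion. I expect the only genuinely delicate points to be administrative rather than computational: justifying carefully that \eqref{BLP1} is a legitimate instance of \eqref{mpecs} to which Theorem~\ref{nec-con} applies precisely at a robust local solution (this is the role of the auxiliary problem \eqref{BLP2}), and reconciling the subdifferential term produced by Theorem~\ref{nec-con} with the relative subdifferential $\partial_{\mathcal C}f(\bar x)$ displayed in the conclusion. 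The specialization of Theorem~\ref{DCNTheorem}(i) to $u=0$ is routine.
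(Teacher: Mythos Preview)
Your proposal is correct and follows exactly the route the paper intends: the paper states Theorem~\ref{optim-1} without a detailed proof, merely prefacing it with ``By using Theorem~\ref{nec-con}, we now provide optimality conditions\ldots'', and your argument supplies precisely those details---identify \eqref{BLP1} as an instance of \eqref{mpecs}, compute $D^*_{\mathcal C}G(\bar x,0)(0)$ via Proposition~\ref{DCN-G} and Theorem~\ref{DCNTheorem}(i) at $u=0$, verify $(q_1)$ and $(q_2)$, and invoke Theorem~\ref{nec-con}. The two administrative points you flag (the role of the mixed problem \eqref{BLP2} in justifying applicability of Theorem~\ref{nec-con} at a \emph{robust} local solution, and the match between $\partial f_{\mathcal C_1}$ in \eqref{thm14-eq0} and $\partial_{\mathcal C}f$ in the conclusion) are exactly the places where the paper is terse, so your caution there is well placed rather than a gap.
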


It known from Theorem~\ref{thm2} that   condition (i) is fulfilled if $f$ is local Lipschitz-like with respect to $\mathcal C$ at $\bar x$. Moreover, (i) is also satisfied if   relation \eqref{nec-suf-lip} holds. In this case, we have the simple necessary optimality condition as the following corollary.

\begin{corollary}
    Consider  problem \eqref{BLP}. Let $\bar x$ be a robust local solution to problem \eqref{BLP}, and let   relation \eqref{nec-suf-lip} hold. Assume that {\rm (ii)} in Theorem~\ref{optim-1} is fulfilled. Then $0\in \partial_{\mathcal C} f(\bar x).$
\end{corollary}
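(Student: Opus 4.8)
The plan is to derive the corollary as a direct specialization of Theorem~\ref{optim-1}. The key observation is that when relation~\eqref{nec-suf-lip} holds, the set
$$
\left(\cone\{a_i\mid i\in \bar I_2\} + {\rm span}\{a_i\mid i\in \bar I_1\}\right)\cap\left(\bigcup_{T\in\mathcal I_2}\mathcal T_T\right)
$$
reduces to $\{0\}$. First I would verify that assumption (i) of Theorem~\ref{optim-1} is automatically satisfied: since the intersection of $-\partial^{\infty}_{\mathcal C}f(\bar x)$ with the singleton $\{0\}$ can only be $\{0\}$ itself (as $0$ lies in every cone appearing, in particular in $\partial^{\infty}_{\mathcal C}f(\bar x)$, which is a cone containing the origin), condition (i) holds trivially whenever \eqref{nec-suf-lip} is in force. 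This is essentially the remark made just before the corollary, namely that \eqref{nec-suf-lip} is stronger than (i).

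Next, with both hypotheses (i) and (ii) of Theorem~\ref{optim-1} verified, I would invoke that theorem to obtain
$$
0\in \partial_{\mathcal C} f(\bar x) + \left(\cone\{a_i\mid i\in \bar I_2\} + {\rm span}\{a_i\mid i\in \bar I_1\}\right)\cap \left(\bigcup_{T\in\mathcal I_2}\mathcal T_T\right).
$$
Then I would substitute the consequence of \eqref{nec-suf-lip}, which says precisely that the second summand equals $\{0\}$. Hence the sum collapses to $\partial_{\mathcal C} f(\bar x)$, yielding $0\in\partial_{\mathcal C}f(\bar x)$, which is the desired conclusion.

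There is essentially no substantive obstacle here; the corollary is a bookkeeping consequence of Theorem~\ref{optim-1} together with the definitional identity that a set intersected with its own distinguished point is that point. The only minor care needed is to make explicit that \eqref{nec-suf-lip} is exactly the statement that the relevant cone intersection is trivial, and that this triviality propagates through the Minkowski sum in the conclusion of Theorem~\ref{optim-1}. I expect the write-up to be just two or three sentences invoking the prior theorem and the remark preceding the corollary.
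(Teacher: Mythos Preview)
Your proposal is correct and matches the paper's approach exactly: the paper notes just before the corollary that relation~\eqref{nec-suf-lip} forces condition~(i) of Theorem~\ref{optim-1} to hold, and then the conclusion of Theorem~\ref{optim-1} collapses to $0\in\partial_{\mathcal C}f(\bar x)$ because the second summand is $\{0\}$. No further argument is needed.
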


We next establish necessary conditions for a local solution to the \ref{BLP} problem as follows.

\begin{theorem}\label{optim-3}
    Consider  problem \eqref{BLP}. Let $\bar x$ be a local solution to problem \eqref{BLP}, and let the following assumptions hold:

    {\rm (i)} $[-\partial^{\infty}_{\mathcal C}f(\bar x)]\cap \left(\cone\{a_i\mid i\in  \bar I_2\} + {\rm span}\{a_i\mid i\in \bar I_1\}\right)\cap\left(\bigcup_{T\in\mathcal I_2}\mathcal T_T\right)= \{0\},$ where
    $$\mathcal I_2:=\left\{T\subset \bar I_2\mid  C_{\bar I_1\cup T}\ne \emptyset\right\}.$$

    {\rm (ii)} $\{\Omega_1,\Omega_2\}$ are normal-densed in $\{\mathcal C_1,\mathcal C_2\}$ at $(\bar x, 0 , f(\bar x)),$ where $$\Omega_1:=\left\{(x,y,z)\mid (x,z)\in\epi f, y\in \R^n\right\}$$ and $$\Omega_2:=\gph G\times\R.$$

{\rm (iii)} There exists $\delta>0$ such that for any $x\in \mathcal C\cap\B(\bar x,\delta),$ the following implication holds:
\begin{equation}
    \label{optim-3-eq1}
   d(0, G(x))>0\implies d(f(\bar x),f(x)+\R_+)>0.
\end{equation}

    Then
    $$0\in \partial_{\mathcal C} f(\bar x) + \left(\cone\{a_i\mid i\in  \bar I_2\} + {\rm span}\{a_i\mid i\in \bar I_1\}\right)\cap \left(\bigcup_{T\in\mathcal I_2}\mathcal T_T\right).$$
\end{theorem}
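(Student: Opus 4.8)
The plan is to show that hypothesis~{\rm (iii)} forces $\bar x$ to be, in fact, a \emph{robust} local solution of problem~\eqref{BLP}; once this is done the conclusion follows word for word from Theorem~\ref{optim-1}, whose assumptions~{\rm (i)} and~{\rm (ii)} coincide with the ones assumed here. Thus the entire content of the argument is the reduction ``local solution $+$ {\rm (iii)} $\Rightarrow$ robust local solution''.

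To carry this out I would first record the elementary equivalences
$$
0\in G(x)\ \Longleftrightarrow\ d(0,G(x))=0\ \Longleftrightarrow\ x\in\mathcal S,
$$
which hold because $G(x)=c+N(x,\Theta)$ is closed and, for the linear objective $c^{\top}x$ over the polyhedron $\Theta$, optimality of $x$ amounts to $-c\in N(x,\Theta)$. In particular $0\in G(\bar x)$ since $\bar x\in\mathcal S\cap\mathcal C$, so $(\bar x,0)$ is feasible for problem~\eqref{BLP2}. Next let $\delta_1>0$ be a radius of local optimality of $\bar x$ for~\eqref{BLP}, i.e.\ $f(\bar x)\le f(x)$ for all $x\in\mathcal S\cap\mathcal C\cap\B(\bar x,\delta_1)$, and set $\rho:=\min\{\delta,\delta_1\}$ with $\delta$ taken from~{\rm (iii)}. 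I would then pick an arbitrary $(x,y)\in\gph G$ with $x\in\mathcal C$ and $\Vert(x,y)-(\bar x,0)\Vert<\rho$. Since the product norm dominates $\Vert x-\bar x\Vert$ we get $x\in\mathcal C\cap\B(\bar x,\rho)$, and since $y\in G(x)$ forces $N(x,\Theta)\ne\emptyset$ we also get $x\in\Theta$, so $f(x)\in\R$. Two cases arise: if $x\in\mathcal S$, local optimality gives $f(x)\ge f(\bar x)$; if $x\notin\mathcal S$, then $d(0,G(x))>0$, so~{\rm (iii)} yields $d(f(\bar x),f(x)+\R_+)>0$, which for the finite value $f(x)$ is equivalent to $f(\bar x)<f(x)$. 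In either case $\tilde f(x,y)=f(x)\ge f(\bar x)=\tilde f(\bar x,0)$, so $(\bar x,0)$ is a local minimizer of~\eqref{BLP2}, i.e.\ $\bar x$ is a robust local solution of~\eqref{BLP}.

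Having established this, I would simply invoke Theorem~\ref{optim-1}, which under the present hypotheses~{\rm (i)} and~{\rm (ii)} yields
$$
0\in\partial_{\mathcal C}f(\bar x)+\Big(\cone\{a_i\mid i\in\bar I_2\}+{\rm span}\{a_i\mid i\in\bar I_1\}\Big)\cap\Big(\bigcup_{T\in\mathcal I_2}\mathcal T_T\Big),
$$
as claimed. The only point requiring care is the reformulation step: one must ensure that {\rm (iii)} is applied only at feasible points of~\eqref{BLP2}, where $f$ is finite (so that ``$d(f(\bar x),f(x)+\R_+)>0$'' genuinely reads ``$f(\bar x)<f(x)$''), and that the single radius $\rho$ simultaneously governs the local-optimality ball and the ball in~{\rm (iii)}. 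Beyond this bookkeeping I do not anticipate a substantive obstacle, precisely because hypothesis~{\rm (iii)} has been designed to upgrade an ordinary local solution to a robust one.
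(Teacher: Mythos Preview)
Your proposal is correct and follows essentially the same approach as the paper: both argue that hypothesis~{\rm (iii)} upgrades the local solution $\bar x$ to a robust local solution by splitting into the cases $0\in G(x)$ versus $0\notin G(x)$, and then invoke Theorem~\ref{optim-1}. Your version is in fact slightly more careful in recording the equivalence $0\in G(x)\Leftrightarrow x\in\mathcal S$, in checking that $f(x)$ is finite before reading $d(f(\bar x),f(x)+\R_+)>0$ as $f(\bar x)<f(x)$, and in choosing a single radius $\rho=\min\{\delta,\delta_1\}$.
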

\begin{proof}
    Let $\bar x$ be a local solution to the \ref{BLP} problem. There exists $\bar\delta\le \delta$ such that
    $$f(\bar x)\le f(x)\; \text{ \rm for all } x\in \mathcal C\cap\B(\bar x,\delta) \text{ \rm with } 0\in G(x).$$ For any $(x,y)\in \mathcal C\times\R^{m}\cap\B((\bar x,0),\bar\delta)$ with $y\in G(x),$ we consider the following two cases.

    Case 1. If $0\in G(x)$ then $x\in \mathcal C\cap \B(\bar x,\delta)$ and $0\in G(x).$ Thus, we have
    $$\tilde f(\bar x,0)=f(\bar x)\le f(x)=\tilde f(x,y).$$

    Case 2. If $0\notin G(x)$ then $d(0,G(x))>0.$ By \eqref{optim-3-eq1}, we have $d(f(\bar x),f(x)+\R)>0.$ It implies that $f(\bar x)\notin \{f(x)\}+\R_+$ which gives us that $$\tilde f(\bar x,0)=f(\bar x)\le f(x)=\tilde f(x,y).$$

    Combining both two cases, we obtain $$\tilde f(\bar x,0)=f(\bar x)\le f(x)=\tilde f(x,y)\; \text{ \rm for all } (x,y)\in \B((\bar x,0),\delta) \text{ \rm with } y\in G(x).$$ So $(\bar x,0)$ is a local solution to the \ref{BLP2} problem and then $\bar x$ is a robust local solution to the \ref{SBLP} problem. By using Theorem~\ref{optim-1}, we obtain that
    $$0\in \partial_{\mathcal C} f(\bar x) + \left(\cone\{a_i\mid i\in  \bar I_2\} + {\rm span}\{a_i\mid i\in \bar I_1\}\right)\cap \left(\bigcup_{T\in\mathcal I_2}\mathcal T_T\right).$$
\end{proof}

\begin{remark}
{\rm   The \ref{BLP1} problem is similar to the problem (6.1) in \cite{MO2007} but the loss of continuous differentiability of the objective function and so \cite[Theorem 6.1]{MO2007} may not be applied to this problem.  Problem \eqref{BLP1} can be written in the form of the problems in \cite{G13, TC18}, so we can use directionally necessary optimality conditions as in those papers to study  problem~\eqref{BLP1}. However, in that case, necessary conditions are not explicit and may need to add other qualification conditions, for example, $G$ is metrically subregular in critical directions at $\bar x$ and/or $f$ is local Lipschitz at $\bar x.$}
\end{remark}

We close this section by an illustrated example for Theorem~\ref{optim-3}.

\begin{example}
    Consider the following problem:
    \begin{align}
        \min\; & \sqrt{\vert x\vert}\label{prob4}\\
        \text{ \rm such that }& x\in \mathcal S, x\le 0,\nonumber\\
        \textbf{\rm where }& \mathcal S:={\rm argmin}\{x\mid x\ge 0\}.\nonumber
    \end{align}
    Setting $\Theta:=\left\{x\in \R\mid x\ge 0\right\}$ and $\mathcal C:=\left\{x\in\R\mid x\le 0\right\}$, we have $a_1:=-1, a_2:=\{1\}$ and $b_i=0$ for all $i\in I_1\cup I_2$ with $I_1=\{1\}, I_2=\{2\}.$ Moreover, it is easy to see that $\bar x=0$ is a unique local solution to the problem. By directly computations, we have $\bar I_1=\{1\},\bar I_2=\{2\}$ and $\mathcal I_2=\{\bar I_2\}$,
$$\partial^{\infty}_{\mathcal C}f(0)=\partial_{\mathcal C}f(0)= \mathcal T_{\bar I_2}=\R_- \; \text{ \rm and }\;  \cone\{a_i\mid i\in \bar I_2\}+{\rm span}\,\{a_i\mid i\in \bar I_1\}=\R,$$
where $\bar I_1=\{-1\}$ and $\bar I_2=\{1\}.$ Thus $(q_1)$ holds. We next consider the following sets.
$$\Omega_1:=\{(x,y,z)\mid (x,z)\in \epi f, y\in \R\} \text{ \rm and } \Omega_2:=\left\{(x,y,z)\mid (x,y)\in \gph G\right\},$$
where $G(x):=\{y\mid y\in \{1\} + N(x,\Theta) \}=\begin{cases} (-\infty ,1]& \text{ \rm if } x =0,\\ \{1\} &\text{ \rm if } x>0,\\ \emptyset &\text{ otherwise.} \end{cases}$

For any $(x,y)\xrightarrow{\tgph G\cap(\R_-\times\R)}(0,0)$ and $(x,z)\xrightarrow{(\tepi f)\cap(\R_-\times\R)}(0,0)$, we have
$$\hat N((x,y),\gph G\cap(\R_-\times\R))=\hat N_{\R_-\times\R}((x,y),\gph G)=\begin{cases}
\{0\}\times \R_- &\text{ \rm if } x=0,\\
\emptyset &\text{ \rm otherwise,}\end{cases}
$$
$$\hat N((x,y),\epi f\cap(\R_-\times\R))=\begin{cases}
\R_-(\frac{1}{2\sqrt{-x}},1)& \text{ \rm if } x<0,z=\sqrt{-x},\\
\R\times\R_-&\text{ \rm if } (x,z)=(0,0),\\
\R_+\times\{0\}&\text{ \rm if } x=0,z>0,\\
\{0\}^2&\text{ \rm if } x<0, z>\sqrt{-x},
\end{cases}$$
$$\hat N_{\R_-\times\R}((x,z),\epi f)=\begin{cases}
\R_-(\frac{1}{2\sqrt{-x}},1)& \text{ \rm if } x<0,z=\sqrt{-x},\\
\R_-\times\R_-&\text{ \rm if } (x,z)=(0,0),\\
\{0\}^2&\text{ \rm if } x=0,z>0 \text{ \rm or } x<0, z>\sqrt{-x}.
\end{cases}$$ Thus it is not difficult to see that $\{\Omega_1,\Omega_2\}$ are normal-densed in $\{\R_-\times\R^2,\R_-\times\R^2\}$, which means that $(q_2)$ holds.Moreover, \eqref{optim-3-eq1} holds for any $\delta>0.$ So we have
$$0\in \partial_{\mathcal C}f(\bar x)+(\cone\{a_i\mid i\in \bar I_2\}+{\rm span}\,\{a_i\mid i\in \bar I_1\})\cap\mathcal T_{\bar I_2}.$$
By directly calculations, we have
$$\partial_{\mathcal C}f(\bar x)+(\cone\{a_i\mid i\in \bar I_2\}+{\rm span}\,\{a_i\mid i\in \bar I_1\})\cap\mathcal T_{\bar I_2}=\R_-.$$
  It is necessary to note that the constraints of the problem \eqref{prob4} can be written by $x\in \mathcal S$ with $\mathcal S:={\rm argmin}\{x\mid x=0\}$. In this case, the qualification condition in \cite[Theorem~1]{DDD2010} is not fulfilled. Indeed, we have $N(0,\Theta)=\R$ and $N((0,0),\gph \mathcal N)=\R\times\{0\}$ and thus $(w,v)=(1,0)\ne (0,0)$ satisfies
$$(w,v)\in N((0,0),\gph \mathcal N)\; \text{ \rm and } 0\in w-\nabla^2h(0)^{\top}w +N(0,\Theta).$$
\end{example}

\end{document}